
\documentclass[letterpaper,final,twoside,leqno]{amsart}


\usepackage[normalem]{ulem}
\usepackage{mathtools}
\usepackage{mathrsfs}  
\usepackage[arrow,curve,matrix]{xy}

\usepackage{scalerel}
\usepackage{comment}
\usepackage{xspace}
\usepackage{multicol}

\usepackage{array}
\newcolumntype{P}[1]{>{\centering\arraybackslash}p{#1}}

\newif\ifdraft 
\draftfalse

\setcounter{tocdepth}{1}
\numberwithin{figure}{section}

\usepackage{amsmath,amsfonts,amsthm,mathrsfs}
\usepackage{amssymb}
\usepackage{mathrsfs}

\DeclareFontFamily{OMS}{rsfs}{\skewchar\font'60}
\DeclareFontShape{OMS}{rsfs}{m}{n}{<-5>rsfs5 <5-7>rsfs7 <7->rsfs10 }{}
\DeclareSymbolFont{rsfs}{OMS}{rsfs}{m}{n}
\DeclareSymbolFontAlphabet{\scr}{rsfs}

\usepackage[bookmarks]{hyperref}
\usepackage[usenames,dvipsnames]{xcolor}
\hypersetup{
    colorlinks=true,
    linkcolor={blue!50!black},
    citecolor={green!50!black},
    urlcolor={red!80!black}
}

\usepackage[alphabetic,initials]{amsrefs}

\usepackage{enumitem}

\usepackage{chngcntr}

\ifdraft
\usepackage[notcite,notref,color]{showkeys}

\definecolor{labelkey}{gray}{0.5}
\fi


\setcounter{tocdepth}{1}
\numberwithin{figure}{section}

\DeclareMathOperator{\Gal}{Gal}
\DeclareMathOperator{\gmin}{gmin}
\DeclareMathOperator{\X}{\mathfrak{X}}
\DeclareMathOperator{\Ob}{Ob}
\DeclareMathOperator{\pr}{pr}
\DeclareMathOperator{\rank}{rank}

\DeclareMathOperator{\Pic}{Pic}

\DeclareMathOperator{\Spec}{Spec}

\DeclareMathOperator{\sa}{sa}
\DeclareMathOperator{\Var}{Var}

\newcommand{\h}{\overline{h}}

\newcommand{\wtilde}{\widetilde}

\newcommand{\hooklongrightarrow}{\lhook\joinrel\longrightarrow}

\newcommand{\theoremref}[1]{\hyperref[#1]{Theorem~\ref*{#1}}}
\newcommand{\lemmaref}[1]{\hyperref[#1]{Lemma~\ref*{#1}}}
\newcommand{\definitionref}[1]{\hyperref[#1]{Definition~\ref*{#1}}}
\newcommand{\propositionref}[1]{\hyperref[#1]{Proposition~\ref*{#1}}}
\newcommand{\conjectureref}[1]{\hyperref[#1]{Conjecture~\ref*{#1}}}
\newcommand{\corollaryref}[1]{\hyperref[#1]{Corollary~\ref*{#1}}}
\newcommand{\exampleref}[1]{\hyperref[#1]{Example~\ref*{#1}}}

\makeatletter
\let\old@caption\caption
\renewcommand*{\caption}[1]{%
  \setcounter{figure}{\value{equation}}%
  \stepcounter{equation}%
  \old@caption{#1}\relax%
}
\makeatother


\newcommand{\parref}[1]{\hyperref[#1]{\S\ref*{#1}}}

\makeatletter
\newcommand*\if@single[3]{%
  \setbox0\hbox{${\mathaccent"0362{#1}}^H$}%
  \setbox2\hbox{${\mathaccent"0362{\kern0pt#1}}^H$}%
  \ifdim\ht0=\ht2 #3\else #2\fi
  }
\newcommand*\rel@kern[1]{\kern#1\dimexpr\macc@kerna}
\newcommand*\widebar[1]{\@ifnextchar^{{\wide@bar{#1}{0}}}{\wide@bar{#1}{1}}}
\newcommand*\wide@bar[2]{\if@single{#1}{\wide@bar@{#1}{#2}{1}}{\wide@bar@{#1}{#2}{2}}}
\newcommand*\wide@bar@[3]{%
  \begingroup
  \def\mathaccent##1##2{%
    \if#32 \let\macc@nucleus\first@char \fi
    \setbox\z@\hbox{$\macc@style{\macc@nucleus}_{}$}%
    \setbox\tw@\hbox{$\macc@style{\macc@nucleus}{}_{}$}%
    \dimen@\wd\tw@
    \advance\dimen@-\wd\z@
    \divide\dimen@ 3
    \@tempdima\wd\tw@
    \advance\@tempdima-\scriptspace
    \divide\@tempdima 10
    \advance\dimen@-\@tempdima
    \ifdim\dimen@>\z@ \dimen@0pt\fi
    \rel@kern{0.6}\kern-\dimen@
    \if#31
      \overline{\rel@kern{-0.6}\kern\dimen@\macc@nucleus\rel@kern{0.4}\kern\dimen@}%
      \advance\dimen@0.4\dimexpr\macc@kerna
      \let\final@kern#2%
      \ifdim\dimen@<\z@ \let\final@kern1\fi
      \if\final@kern1 \kern-\dimen@\fi
    \else
      \overline{\rel@kern{-0.6}\kern\dimen@#1}%
    \fi
  }%
  \macc@depth\@ne
  \let\math@bgroup\@empty \let\math@egroup\macc@set@skewchar
  \mathsurround\z@ \frozen@everymath{\mathgroup\macc@group\relax}%
  \macc@set@skewchar\relax
  \let\mathaccentV\macc@nested@a
  \if#31
    \macc@nested@a\relax111{#1}%
  \else
    \def\gobble@till@marker##1\endmarker{}%
    \futurelet\first@char\gobble@till@marker#1\endmarker
    \ifcat\noexpand\first@char A\else
      \def\first@char{}%
    \fi
    \macc@nested@a\relax111{\first@char}%
  \fi
  \endgroup
}
\makeatother


\usepackage{amsbsy}

\usepackage{marvosym}

\DeclareMathAlphabet{\smallchanc}{OT1}{pzc}%
                                 {m}{it}
\DeclareFontFamily{OT1}{pzc}{}
\DeclareFontShape{OT1}{pzc}{m}{it}%
             {<-> s * [1.100] pzcmi7t}{}
\DeclareMathAlphabet{\mathchanc}{OT1}{pzc}%
                                 {m}{it}






\newcommand{\sB}{\mathscr{B}}

\newcommand{\sF}{\mathscr{F}}
\newcommand{\sG}{\mathscr{G}}

\newcommand{\sL}{\mathscr{L}}
\newcommand{\sM}{\mathscr{M}}

\newcommand{\sO}{\mathscr{O}}

\newcommand{\sV}{\mathscr{V}}







\newcommand{\bC}{\mathbb{C}}

\newcommand{\bN}{\mathbb{N}}

\newcommand{\bP}{\mathbb{P}}
\newcommand{\bQ}{\mathbb{Q}}
\newcommand{\bR}{\mathbb{R}}

\newcommand{\bZ}{\mathbb{Z}}



\newcommand{\frM}{\mathfrak{M}}









\DeclareSymbolFont{largesymbolsA}{U}{jkpexa}{m}{n}
\SetSymbolFont{largesymbolsA}{bold}{U}{jkpexa}{bx}{n}
\DeclareMathSymbol{\varprod}{\mathop}{largesymbolsA}{16}
\makeatletter
\newcommand{\LeftEqNo}{\let\veqno\@@leqno}
\makeatother
 %


\newcommand{\properideal}%
        {\subsetneq}










\newcommand\dash[1]{\rule[-.2ex]{#1}{.4pt}}

\DeclareMathOperator{\Aut}{Aut}

\DeclareMathOperator{\codim}{codim}

\DeclareMathOperator{\Hilb}{Hilb}

\DeclareMathOperator{\id}{{id}}

\DeclareMathOperator{\reg}{reg}

\DeclareMathOperator{\supp}{{supp}}


%
%
\newcommand{\factor}[2]{\left. \raise .2em\hbox{\ensuremath{#1}\vphantom{$I^d$}}
\hskip -.1em \right/ \hskip -.4em \raise -.3em\hbox{\ensuremath{#2}}}%
\newcommand\mtimes[3]{{\varprod_{#1}^{#2}}_{\raise 1ex \hbox{\scriptsize #3}}}%





\newcommand{\sblank}{\dash{.6em}}

%



\def\dimcoh#1.#2.#3.{h^{#1}(#2,#3)}
\def\hypcoh#1.#2.#3.{\mathbb H_{\vphantom{l}}^{#1}(#2,#3)}
\def\loccoh#1.#2.#3.#4.{H^{#1}_{#2}(#3,#4)}
\def\dimloccoh#1.#2.#3.#4.{h^{#1}_{#2}(#3,#4)}
\def\lochypcoh#1.#2.#3.#4.{\mathbb H^{#1}_{#2}(#3,#4)}
\def\seslong#1.#2.#3.{0  \longrightarrow  #1   \longrightarrow 
 #2 \longrightarrow #3 \longrightarrow 0} 
\def\sesshort#1.#2.#3.{0
 \rightarrow #1 \rightarrow #2 \rightarrow #3 \rightarrow 0}
\def\Iff#1#2#3{
\hfil\hbox{\hsize =#1
\vtop{\noin #2}
\hskip.5cm 
\lower.5\baselineskip\hbox{$\Leftrightarrow$}\hskip.5cm
\vtop{\noin #3}}\hfil\medskip}
\newcommand{\union}\cup
\newcommand{\intersect}\cap
\newcommand{\Union}\bigcup
\newcommand{\Intersect}\bigcap
\def\myoplus#1.#2.{\underset #1 \to {\overset #2 \to \oplus}}

\newcommand{\resto}[1]{\raise -.5ex\hbox{$\vert$}_{#1}}



\newcommand\noin{\noindent}


\NeedsTeXFormat{LaTeX2e}
\ProvidesPackage{skautoref-sf}
   [2017/06/13 v0.1 finetuning mabliautoref and theorem setup]

\RequirePackage{hyperref}

\newcommand{\sectionsize}{} 
\newcommand{\theoremsize}{} 

\renewcommand{\subsectionautorefname}{\sectionsize\sf \subsectionautorefname}

\makeatletter
\@ifdefinable\equationname{\let\equationname\equationautorefname}
\def\equationautorefname~#1\@empty\@empty\null{\protect{\theoremsize
    (#1\@empty\@empty\null)}}%
\@ifdefinable\AMSname{\let\AMSname\AMSautorefname}
\def\AMSautorefname~#1\@empty\@empty\null{
  ( #1\@empty\@empty\null)}%
\@ifdefinable\itemname{\let\itemname\itemautorefname}
\def\itemautorefname~#1\@empty\@empty\null{\theoremsize{%
    {#1}}\@empty\@empty\null%
}%
\makeatother



%
\RequirePackage{amsthm}
\RequirePackage{aliascnt}
\newcommand{\basetheorem}[3]{%
    \newtheorem{#1}{#2}[#3]
    \newtheorem*{#1*}{#2}
    \expandafter\def\csname #1autorefname\endcsname{#2}
}%
\newcommand{\maketheorem}[3]{%
    \newaliascnt{#1}{#2}
    \newtheorem{#1}[#1]{\theoremsize #3}
    \aliascntresetthe{#1}
    \expandafter\def\csname #1autorefname\endcsname{#3}
    \newtheorem{#1*}{#3}
}%
\newcommand{\baseremark}[3]{%
    \newtheorem{#1}{#2}{#3}
    \newtheorem*{#1*}{#2}
    \expandafter\def\csname #1autorefname\endcsname{#2}
}%
\newcommand{\makeremark}[3]{%
    \newaliascnt{#1}{#2}
    \newtheorem{#1}[equation]{#3}
    \aliascntresetthe{#1}
    \expandafter\def\csname #1autorefname\endcsname{\theoremsize\sf #3}
    \newtheorem{#1*}{#3}
}%
\theoremstyle{plain}   

\basetheorem{theorem}{Theorem}{section}


\theoremstyle{definition}    





\newcommand{\hide}[1]{}

\setlist[enumerate, 1]{font=\upshape}

\setcounter{tocdepth}{1}
\numberwithin{figure}{section}

\theoremstyle{plain}
\maketheorem{prop}{theorem}{Proposition}
\maketheorem{reform}{theorem}{Reformulation}
\maketheorem{cor}{theorem}{Corollary}
\maketheorem{lem}{theorem}{Lemma}
\maketheorem{conj}{theorem}{Conjecture}

\newtheorem{proposition}[prop]{Proposition}
\newtheorem{reformulation}[reform]{Reformulation}
\newtheorem{corollary}[cor]{Corollary}
\newtheorem{lemma}[lem]{Lemma}

\theoremstyle{definition}
\maketheorem{defini}{theorem}{Definition}
\newtheorem{definition}[defini]{Definition}

\maketheorem{defff}{theorem}{Definition-Theorem}
\newtheorem{def-thm}[defff]{Definition-Theorem}
\theoremstyle{remark}
\maketheorem{rem}{theorem}{Remark}
\newtheorem{remark}[rem]{Remark}
\theoremstyle{example}
\maketheorem{exam}{theorem}{Example}
\newtheorem{example}[exam]{Example}
\maketheorem{notat}{theorem}{Notation}
\maketheorem{notr}{theorem}{Notation-Remark}
\maketheorem{defnot}{theorem}{Definition-Notation}
\newtheorem{notation}[notat]{Notation}
\newtheorem{not-rem}[notr]{Notation-Remark}
\newtheorem{def-not}[defnot]{Definition-Notation}

\maketheorem{set}{theorem}{Set-up}
\newtheorem{set-up}[set]{Set-up}

\maketheorem{term}{theorem}{Terminology}
\newtheorem{terminology}[term]{Terminology}

\maketheorem{obs}{theorem}{Observation}
\newtheorem{observation}[obs]{Observation}

\maketheorem{fac}{theorem}{Fact}
\newtheorem{fact}[fac]{Fact}

\maketheorem{ass}{theorem}{Assumption}
\newtheorem{assumption}[ass]{Assumption}

\maketheorem{conn}{theorem}{Convention}

\maketheorem{clai}{theorem}{Claim}
\newtheorem{claim}[clai]{Claim}

\maketheorem{sclai}{theorem}{Subclaim}
\newtheorem{subclaim}[sclai]{Subclaim}

\setlist[enumerate]{label=(\thetheorem.\arabic*), before={\setcounter{enumi}{\value{equation}}}, after={\setcounter{equation}{\value{enumi}}}}

\numberwithin{equation}{theorem}

\setlength{\textwidth}{5.4 in}
\setlength{\evensidemargin}{0.55 in}
\setlength{\oddsidemargin}{0.55 in}

\makeatletter
\let\amsmath@bigm\bigm

\renewcommand{\bigm}[1]{%
  \ifcsname fenced@\string#1\endcsname
    \expandafter\@firstoftwo
  \else
    \expandafter\@secondoftwo
  \fi
  {\expandafter\amsmath@bigm\csname fenced@\string#1\endcsname}%
  {\amsmath@bigm#1}%
}
\newcommand{\DeclareFence}[2]{\@namedef{fenced@\string#1}{#2}}
\makeatother

\DeclareFence{\mid}{|}

\begin{document}

\title[Boundedness results for smooth families of varieties]
{Boundedness results for families of non-canonically polarized projective varieties}

\author{Kenneth Ascher} 
\address{Kenneth Ascher, Department of Mathematics, University of California Irvine,
Irvine, CA 92697,  USA}
\email{\href{mailto:kascher@uci.edu }{kascher@uci.edu}}
\urladdr{\href{https://www.math.uci.edu/~kascher/}
  {https://www.math.uci.edu/~kascher/}}

\author{Behrouz Taji} \address{Behrouz Taji, School of Mathematics and Statistics,
The University of New South Wales Sydney, NSW 2052 Australia}
\email{\href{mailto:b.taji@unsw.edu.au}{b.taji@unsw.edu.au}}
\urladdr{\href{https://web.maths.unsw.edu.au/~btaji//}
  {https://web.maths.unsw.edu.au/~btaji/}}


\keywords{Families of manifolds, flat families, boundedness of morphisms, stacks and moduli problems}

\subjclass[2020]{14D06, 14D22, 14D23, 14E05, 14D07, 14E30, 14J32}


\setlength{\parskip}{0.19\baselineskip}


\begin{abstract}
We prove that, over a smooth quasi-projective curve, 
the set of non-isotrivial, smooth and projective families of 
polarized varieties with a fixed Hilbert polynomial
and semi-ample canonical bundle is bounded.  
This extends the boundedness results of Arakelov, Parshin, and Kov\'acs--Lieblich beyond the canonically polarized case.
%
\end{abstract}

\maketitle


\newcommand\hmarginpar[1]{}

\section{Introduction and main results}
\label{Section1-Introduction}

Bounding classes of varieties of a fixed type is a recurring theme in many problems in 
algebraic geometry. In particular, bounding families of varieties of fixed type
over a fixed base can be traced back 
to the celebrated works of Parshin \cite{Parshin68} and Arakelov \cite{Arakelov71} in the setting of 
isomorphism classes of smooth families of projective 
curves of genus $g \geq 2$ over a curve.  
This boundedness result was successfully generalized to arbitrary dimension for smooth projective families 
of canonically polarized varieties by the combined works of 
Bedulev-Viehweg~\cite{Bedulev-Viehweg00} 
and Kov\'acs--Lieblich \cite{KoL10}. As pointed out in Viehweg's survey on Arakelov inequalities \cite[pp. 247--248]{Vie08}, there are much fewer results known if the canonically polarized assumption is dropped, in particular, the analogous boundedness statement is unknown. As such,  we take this as our aim in this article: we analyze boundedness problems for smooth projective families of 
complex varieties that are not canonically polarized, for example (polarized) families of 
varieties with only semi-ample canonical bundle.

In this context, over curves, our main 
result is the following theorem. 

\begin{theorem}\label{thm:curve}
Let $C^0$ be any smooth quasi-projective curve. 
The set of non-isotrivial, smooth, polarized and projective families of varieties with semi-ample canonical bundle 
over $C^0$, with Hilbert polynomial $\h$, 
is bounded.
\end{theorem}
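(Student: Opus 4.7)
My plan is to reduce boundedness of the families to boundedness of the associated moduli maps from $C^0$, and then to bound the degrees of these maps via an Arakelov-type inequality. Fix a smooth projective compactification $C \supset C^0$ with reduced boundary $\Delta = C \setminus C^0$. By Viehweg's construction of quasi-projective coarse moduli spaces for smooth polarized projective varieties with semi-ample canonical bundle and fixed Hilbert polynomial, there is a quasi-projective scheme $M_\h$ whose points parametrize polarized isomorphism classes of our objects. Each family $(f\colon X \to C^0, \lambda)$ in our class induces a moduli morphism $\mu_f \colon C^0 \to M_\h$, non-constant by non-isotriviality, and boundedness of our class of families reduces to boundedness of the resulting set $\{\mu_f\}$, since a universal family exists on a suitable finite cover (or on the Hilbert scheme representing the associated rigidified functor).

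\emph{Arakelov-type degree bound.} Let $L$ be an ample line bundle on $M_\h$. The key estimate I need is
\[
\deg_C \mu_f^* L \;\leq\; N(\h, g(C), |\Delta|),
\]
for a constant $N$ depending only on the indicated quantities. By Viehweg's ample-line-bundle construction on the moduli space, a positive power of $L$ is realised, after pullback along a cover carrying a universal family, as the determinant of a direct image of the form $f_*\bigl(\omega_{X/C}^{\otimes a} \otimes \lambda^{\otimes b}\bigr)$ for integers $a, b$ determined by $\h$. Weak positivity and Hodge-theoretic slope bounds for such direct images---due to Viehweg and Kawamata, and extended in the semi-ample setting through results of Campana--P\u{a}un and Popa--Schnell---should yield an upper bound of the required shape, incorporating the polarisation $\lambda$ in place of the (possibly absent) ampleness of $\omega_{X/C}$ on fibres.

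\emph{Bounded Hom scheme and conclusion.} Given the uniform degree bound, extend the maps $\mu_f$ to morphisms from $C$ into a fixed projective compactification $\overline{M}_\h$ of $M_\h$. The scheme $\mathrm{Hom}_{\leq N}(C, \overline{M}_\h)$ of morphisms of degree at most $N$ is of finite type by standard Grothendieck--Mori arguments; the locus mapping $C^0$ into $M_\h$ is a locally closed subscheme, still of finite type. Pulling back a universal family over a finite cover of the stack underlying $M_\h$ then produces a finite-type parameter scheme $S$ and a family over $S \times C^0$ exhausting our class, which is exactly the desired boundedness.

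\emph{Main obstacle.} The principal challenge is the Arakelov-type bound. In the canonically polarised setting of Kov\'acs--Lieblich, ampleness of $\omega_{X/C}$ on fibres forces $f_* \omega_{X/C}^{\otimes m}$ to dominate the pullback of $L$ with strong positivity, and the Bedulev--Viehweg inequality applies directly. With only semi-ample canonical bundle, $f_*\omega_{X/C}^{\otimes m}$ alone need not separate points of $M_\h$, so one is forced to work with mixed direct images involving $\lambda$; tracking the positivity through Viehweg's fibre-product trick and the associated variation of Hodge structure to obtain an \emph{effective} bound $N$ depending only on $\h$ and $(C, \Delta)$---and not on the individual family---is the delicate heart of the argument.
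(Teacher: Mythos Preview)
Your outline correctly identifies the central difficulty---that $f_*\omega_{X/C}^{\otimes m}$ alone cannot separate points of $M_{\h}^{\sa}$ when the canonical bundle is only semi-ample---but the proposed resolution via Arakelov-type bounds for \emph{mixed} direct images $f_*(\omega_{X/C}^{\otimes a}\otimes\lambda^{\otimes b})$ is the gap. The results you cite (Campana--P\u{a}un, Popa--Schnell) give weak positivity or pseudo-effectivity of such sheaves, not effective upper bounds on $\deg\det f_*(\omega^a\otimes\lambda^b)$ in terms of $(g(C),|\Delta|)$ alone. Arakelov inequalities rest on the direct image underlying a polarized variation of Hodge structure (or a Viehweg--Zuo Higgs bundle); this is available for $f_*\omega_{X/C}^{\otimes m}$ through cyclic-cover constructions, but an arbitrary relative polarization $\lambda$ carries no such Hodge-theoretic structure, so there is no known mechanism producing the uniform bound you need. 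Moreover, as the paper's introduction explains, $M_{\h}^{\sa}$ is \emph{not} compatible with birational variation: a birationally isotrivial family with trivial $\det f_*\omega^m$ can still be generically finite onto its image in $M_{\h}^{\sa}$, so no inequality built from $\omega$ alone can control $\deg\mu_f^*L$, and the mixed version is simply not available.

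The paper circumvents this entirely. Rather than seek new Arakelov inequalities, it introduces an auxiliary representable functor---the Hilbert functor of \emph{doubly} embedded schemes $\mathfrak H^{d,l,k}_{\frM_h^{\sa}}$---with universal family $\X\to H$. Over (an open subset of) $H$ one passes to a relative good minimal model $\X'\to H'$ and equips it with a carefully chosen polarization $\sL'$ (constructed by Galois-averaging a pullback from Kawamata's base $V''$; see the Appendix) so that the resulting moduli functor $\frM''_{h''}$ \emph{is} compatible with variation. Now the standard Arakelov inequality for $\det f_*\omega^m_{X/C}$, combined with Kawamata's bigness when variation is maximal, bounds maps $C\to\overline M''_{h''}$. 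For non-isotrivial (hence maximally varying) families over a curve, the induced map $H\to M''_{h''}$ is generically finite on the relevant locus, so one pulls the bound back to $H$ (Lemma~4.5) and then descends to $M_{\h}^{\sa}$ via a factorization argument (diagram~(5.2.4)). Finally, coarse boundedness plus compactifiability of the stack $\mathcal M_{\h}^{\sa}$ (verified via Kresch's criterion) feeds into the Kov\'acs--Lieblich machine to conclude. The point is that the paper never leaves the realm of classical Arakelov inequalities for $\omega^m$; the innovation is the reparametrization that makes those inequalities applicable.
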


See  \autoref{sec:modulifunctors} for the definition of $\h$ and the relevant moduli functor. The families in \autoref{thm:curve} are associated to Viehweg's moduli functor for isomorphism classes of 
smooth polarized varieties with semi-ample canonical line bundle see \autoref{def:sa} (c.f. \cite{Viehweg95}).
As mentioned above, if we assume that the geometric fibers are canonically polarized, \autoref{thm:curve} is the celebrated 
result of Parshin \cite[Thm.~1]{Parshin68} and Arakelov \cite{Arakelov71}, when fibers are of dimension one, 
and Kov\'acs--Lieblich~\cite[Thm.~1.6]{KoL10}, 
for fibers of arbitrary dimension. 

The notion of boundedness is defined as follows (see \autoref{PolB} for a more precise definition in this context).
\begin{definition}\label{def:BB}
Let $V$ be a smooth quasi-projective variety. We call a set $\mathfrak C_V$ of projective morphisms over $V$ bounded 
(resp. birationally bounded), if there are  schemes $W$ and $\mathcal Y$ of finite type together with a morphism  
 $\mathfrak f:\mathcal Y\to W\times V$ of finite type such that for every $f^0: U\to V\in\mathfrak C_V$
there is a closed point $w\in W$ for which we have $\mathcal Y_{ \{ w \} \times V } \simeq_V U$ (resp. 
$\mathcal Y_{ \{ w \} \times V} \overset{\rm{bir}}{\sim}_V U$). 
\end{definition}
%
%


In the canonically polarized case, it was shown by \cite{KoL10} that \autoref{thm:curve} in fact 
holds over base schemes of arbitrary dimension. 
However we can only partially extend \autoref{thm:curve} to higher dimensional base spaces. 
More precisely, we prove \autoref{thm:curve} holds over an arbitrary base space as long as 
the variation in the family is maximal
(see subsection \ref{def:var}). 
%
%
\begin{theorem}\label{main:maxvar}
Let $V$ be any smooth quasi-projective variety. 
The set of smooth, polarized and projective families of varieties with semi-ample canonical bundle 
over $V$, with Hilbert polynomial $\h$ and maximal variation, 
is bounded.
\end{theorem}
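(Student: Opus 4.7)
The plan is to reduce boundedness of the given set of families to boundedness of their associated moduli maps into Viehweg's quasi-projective moduli space, and then obtain a uniform degree bound for these moduli maps via weak positivity and the bigness of natural moduli-theoretic line bundles under the maximal variation hypothesis. First, I would invoke Viehweg's construction of a separated quasi-projective coarse moduli scheme $\mcM_{\h}$ for the moduli functor of smooth polarized projective varieties $(Y,\mcL)$ with semi-ample canonical bundle and Hilbert polynomial $\h$. Any family $f^0 \colon U \to V$ in our set induces a moduli map $\mu_f \colon V \to \mcM_{\h}$. Using a universal family available on a suitable finite cover of $\mcM_{\h}$ (for instance via a level structure, or via the associated Hilbert scheme parameterization), a standard descent argument shows that boundedness of the original set of families reduces to boundedness of the set of moduli maps $\{\mu_f\}$.

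Next, I would fix projective compactifications $\bar V \supset V$ and $\overline{\mcM}_{\h} \supset \mcM_{\h}$, together with an ample divisor $H$ on $\bar V$ and a very ample line bundle $\sA$ on $\overline{\mcM}_{\h}$. By standard Hom scheme arguments applied to the closure of the graph in $\bar V \times \overline{\mcM}_{\h}$, the collection of morphisms $V \to \overline{\mcM}_{\h}$ of uniformly bounded $(H,\sA)$-bidegree is parameterized by a scheme of finite type. It therefore suffices to establish a uniform bound of the form $\deg_H \mu_f^* \sA \leq C(\h, V, H)$, depending only on $\h$ and on the fixed data $(V, H)$.

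The degree bound is the central step. By Viehweg's weak positivity theorem, for suitable integers $a, b, N$ determined by $\h$, the direct images $f^0_*(\omega_{U/V}^{\otimes aN} \otimes \mcL^{\otimes b})$ are weakly positive on $V$. By the Viehweg--Zuo construction, appropriate determinants of such pushforwards descend to natural positive line bundles on $\mcM_{\h}$; moreover, under the maximal variation hypothesis these descended line bundles are big. Thus, up to a universal multiple, $\mu_f^* \sA$ is numerically controlled by the determinants of the Viehweg direct images, whose rank and generic degree data depend only on $\h$. Comparing the resulting intersection numbers on $V$ against a chosen ample class then yields the desired uniform bound on $\deg_H \mu_f^* \sA$.

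The main obstacle is establishing this degree bound in the non-canonically polarized setting over higher dimensional bases. In Kov\'acs--Lieblich's treatment of the canonically polarized case one only needs to control a single line bundle (the polarization being a power of $\omega$), whereas here one must simultaneously track the auxiliary polarization $\mcL$ and use maximal variation of the polarized family in an essential way to promote Viehweg's weak positivity to honest bigness of the natural moduli-theoretic line bundles on $\mcM_{\h}$. Once this uniform degree bound is secured, the remainder of the argument is essentially formal Hom scheme bookkeeping together with pullback of the universal family.
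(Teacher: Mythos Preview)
Your outline has a genuine gap at exactly the point the paper identifies as the central difficulty. You write that ``appropriate determinants of such pushforwards descend to natural positive line bundles on $\mcM_{\h}$'' and that comparing intersection numbers then gives a uniform bound on $\deg_H \mu_f^* \sA$. The problem is that the ample line bundles Viehweg constructs on $M^{\sa}_{\h}$ involve the auxiliary polarization $\sL$, while the Arakelov-type inequalities you have available (e.g.\ \autoref{KovTaj}) only bound $\deg f_* \omega^m_{X/C}$. There is no known Arakelov inequality controlling $\deg f_*(\omega^{aN}\otimes \sL^b)$ for $b>0$ uniformly over all families in $\frM^{\sa}_{\h}(V)$, because nothing bounds the polarization a priori. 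Conversely, $\det f_*\omega^m$ is the quantity you can bound, but the moduli $M^{\sa}_{\h}$ is \emph{not} compatible with birational variation (\autoref{def:comp}): a birationally isotrivial family with trivial $\det f_*\omega^m$ can still map generically finitely to $M^{\sa}_{\h}$, so $\det f_*\omega^m$ cannot be compared to the pullback of an ample class there. This is precisely the obstruction explained in the introduction and in \cite[p.~247]{Vie08}.

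The paper circumvents this by passing to an auxiliary representable functor, the Hilbert functor of doubly embedded schemes $\mathfrak H^{d,l,k}_{\frM_h^{\sa}}$, whose universal family admits a relative good minimal model that can be \emph{re}-polarized so as to land in a moduli functor $\frM^{[N]}_{h''}$ which \emph{is} compatible with variation (\autoref{lem:comp}, \autoref{claim:good}). Weak bounds for maps to $\overline M''_{h''}$ follow from Arakelov inequalities for $\det f_*\omega^m$ (\autoref{lem1}); maximal variation guarantees the composite $\overline H \dashrightarrow \overline M''_{h''}$ is generically finite on the relevant images, allowing one to pull back the parametrization (\autoref{lem:Factors}, \autoref{prop:key}); finally one descends back to $M^{\sa}_{\h}$ (\autoref{prop:CB}) and concludes via compactifiability (\autoref{prop:compact}) and \autoref{thm:KL}. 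Your proposal skips this entire detour, and without it the degree bound in your ``central step'' does not follow.
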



\subsection{Background and an overview of the strategy of the proof.}

In the canonically polarized case, the proof of \autoref{thm:curve}
consisted of two main steps:

\begin{enumerate}
\item\label{CB} \emph{(Bounding coarse moduli maps.)} The set of morphisms 
to the associated coarse moduli space 
that arise from smooth canonically polarized families with a fixed Hilbert polynomial
are bounded (in the sense of  \autoref{def:CB}). 
This is a consequence of Bedulev--Viehweg~\cite{Bedulev-Viehweg00}, 
Koll\'ar \cite{Kollar90}, Viehweg \cite{Vie10}  
and Kov\'acs--Lieblich \cite[Sect.~2]{KoL10} (see also Viehweg--Zuo~\cite[\S6]{VZ02}). 
\smallskip
\item\label{B}  \emph{(Bounding families.)} The set of isomorphism classes of the 
families is bounded  \cite[Thm.~1.6]{KoL10} 
as a consequence of a general boundedness result \cite[Thm.~1.7]{KoL10} for any 
\emph{compactifiable} Deligne--Mumford stack
whose coarse moduli maps can be bounded in the sense of \autoref{CB}. 
\end{enumerate}
%
%
%
%
\begin{definition}[Compactifable stacks]\label{def:compact}
A separated Deligne--Mumford stack of finite type is said to be compactifiable 
if it has a quasi-projective coarse moduli space and an 
open immersion into a Deligne--Mumford stack that is proper over $\Spec \mathbb C$. 
\end{definition}

Kresch proved in \cite[Thm. 5.3]{Kresch} that a separated Deligne--Mumford stack of finite type is compactifiable if it is isomorphic to 
a quotient stack by a linear group
and with a quasi-projective coarse 
space. We note that Rydh has more general results on compactifiability of Deligne-Mumford stacks \cite{Rydh}. Classical examples are the stack of curves $\mathcal M_g$ of genus $g$ \cite{DM69} and that of canonically 
polarized projective manifolds $\mathcal M_h$ 
with fixed Hilbert polynomial $h$ (\cite[Lem.~6.1]{KoL10}).

Equally important in the context of boundedness problems, 
in addition to compactifiabilty, is the fact that 
the formation of the coarse space of the latter two moduli stacks 
is naturally \emph{compatible} with the 
birational variation of the parameterized objects (\ref{def:var}), i.e.,
the latter is  equal to the dimension of the image of the 
associated coarse moduli map (see \autoref{def:comp} for the precise 
definition). This turns out to be one of the main obstacles in generalizing \cite{KoL10} to the non-canonically polarized case, and we discuss this now.


\subsubsection{Background in the semi-ample case.}
\label{background-SemiAmple}
For families with semi-ample canonical it is 
conceivable at first that one can generalize \autoref{CB} and \autoref{B} in the same manner as in 
the canonically polarized case, 
using Viehweg's moduli functor $\mathfrak M_{\h}^{\sa}$ (\autoref{def:sa}). 
After all, the coarse space $M_{\h}^{\sa}$ 
of the latter is also constructed as a quotient  
by a linear group and thus one can readily note, 
using Kresch's criterion and Viehweg's original constructions, 
that the associated stack is compactifiable (see \autoref{prop:compact}). 
Proving \autoref{CB} for $M_{\h}^{\sa}$ however turns out to be considerably more difficult. 
The problem largely lies in the incompatibility of $M_{\h}^{\sa}$ with the 
(birational) notion of variation in the sense of \autoref{def:comp}.

Roughly speaking, as Viehweg also explains in \cite[p. 247]{Vie08},  
\autoref{CB} requires strict positivity of $\det (f_*\omega^m_{X/B})$ for a smooth 
compactification $f: X\to B$ of relevant families, and for large enough $m$. 
This in turn relies on the notion of variation (\ref{def:var}). 
As the latter is birational in nature, it is not, in general, consistent 
with the construction of moduli spaces that parametrize 
isomorphism classes of polarized varieties.
For example, a birationally isotrivial family with trivial $\det(f_*\omega^m_{X/B})$ 
may not get contracted by the associated moduli map,
and may even be generically finite over the coarse space. 
In this situation it becomes impossible to trace any connection between 
$\det (f_*\omega^m_{X/B})$ and an ample line bundle in the coarse space, 
which is classically used to establish \autoref{CB} using \emph{Arakelov inequalities}.
In \cite{Bedulev-Viehweg00} and \cite{KoL10} this is reflected in the use of 
 Koll\'ar--Shepherd-Barron and Alexeev's compactification of the  
 coarse moduli as a projective scheme and descent of $\det (f_*\omega^m_{X/B})$
 to a universal ample line bundle in this space.

There is however another moduli functor $\mathfrak P_{\h}^{\sa}$ 
that Viehweg constructs in \cite[\S7.6]{Viehweg95}, which thanks to \cite{Kawamata85}, 
\emph{is} compatible with variation. 
Unfortunately, the construction of 
its coarse moduli space $P_{\h}$, which is proved to be 
quasi-projective \cite[Thm.~1.14]{Viehweg95}, 
requires the identification of orbits of a certain non-linear
group action on $M_{\h}^{\sa}$, yielding a moduli of numerical 
equivalence classes \cite[p.~16]{Viehweg95}, 
as opposed to a moduli of isomorphism classes 
of polarized schemes (\ref{SS:functors}). 
Multiple problems arise from this construction vis-a-vis application to boundednes problems, e.g.
isomorphisms between the parametrized objects of $\mathfrak P_{\h}^{\sa}$ are not in the sense of polarized schemes, 
a property that plays a prominent role in proving \autoref{CB} in the canonically polarized case
(see for example \autoref{fact:LiftIsom}).  

\smallskip 
 
     \begin{tabular}{ |P{1cm}|P{9cm}|P{1.7cm}|}
 \hline
 \multicolumn{3}{|c|}{\textbf{Table 1}: \;\;\;\;\ \;\;\;\;\; \;\;\;\;\; Viehweg's moduli functors \;\;\;\;\;\;   \;\;\;\;\; \;\;\;  \;\;\;\;\; \;\;\;\;\;\;\;  coarse moduli} \\
 \hline
 \textbf{$\mathfrak M_{\h}^{\sa}$}     & moduli functor of polarized manifolds with semi-ample canonical bundle, \autoref{def:sa}  & $M_{\overline h}^{\sa}$ \\
  \hline
 \textbf{$\mathfrak P_{\h}^{\sa}$}     & moduli functor of polarized manifolds with semi-ample canonical bundle up to numerical equivalence, \autoref{2}, \autoref{rem:P} &  $P^{\sa}_{\h}$  \\
 \hline
 \end{tabular}

\smallskip 


This forms the backdrop for \autoref{thm:curve}; 
that for smooth projective families over curves with fibers having semi-ample canonical bundle, 
one can 
skirt the problem of the absence of 
a compactifiable associated stack that is compatible with the notion 
of variation, and still establish boundedness. To do so we use a different parametrizing strategy. 
\subsubsection{An alternative parameterization strategy.}
The first key observation is that
for any moduli functor of varieties with good minimal models
that is \emph{representable} by a quasi-projective variety (i.e., is equipped 
with a universal family) one can resolve the above 
compatibility issue by using a suitable choice of a polarization 
defined for the universal family
(see \autoref{obs:change}). 

Guided by this observation, instead of working directly with $\frM_{\h}^{\sa}$, we consider 
a different functor (\autoref{ss:Hilb2}) that in some sense \emph{approximates}
$\frM^{\sa}_{\h}$ but has the benefit of being 
representable by a universal family over a quasi-projective scheme
(\autoref{claim:good}). We refer to this new functor as 
 \emph{Hilbert functor of doubly embedded schemes}. 
We then prove \autoref{CB} for the coarse moduli space of this latter functor
(\autoref{Section3-WB}). 
Through a process of pullback (\autoref{lem:Factors}) and descent (\autoref{prop:CB} and \autoref{eq:Diag}), we use this 
to prove \autoref{CB} for $M^{\sa}_{\h}$ itself -- this is 
where the additional assumption of maximal variation, or non-isotriviality in the curve case, is needed. 

We point out that in the canonically polarized case, \autoref{CB} is a consequence of 
the notion of \emph{weak-boundedness} for the associated stack 
(see \autoref{WBGeometry}). Although we do not prove that the stack of $\mathfrak M^{\sa}_{\h}$ 
is weakly-bounded, we show that, at least for maximally-varying families, 
it satisfies boundedness properties in the sense of \autoref{CB}.
With the associated stack of $M_{\h}^{\sa}$ now being compactifiable (\autoref{prop:compact}), we then 
follow the strategy of \cite{KoL10} (see \autoref{thm:KL}).

We note that weak-boundedness for the moduli stack of smooth projective 
varieties $Y$ with $\omega_Y^{\delta}\simeq \sO_Y$, for some $\delta\in \bN$, 
is known by the combination of 
\cite[Thm.~6]{Vie10} and \cite[\S6]{VZ02}. 

Generalization of moduli properties of 
canonically polarized varieties to those with no canonical choice of polarization 
is rarely straightforward. However, recently there has been significant progress in 
bridging this gap, especially in the context of hyperbolicity, including \cite{Deng22}
and \cite{JLSZ}.

\subsubsection{Further applications.}
We can apply the above strategy to establish further boundedness 
results for a much larger class of families of varieties; 
those with a good minimal model (\autoref{def:gmm}). 
As usual for such families one would expect 
boundedness in terms of birational classes (in the sense of \autoref{def:BB})
rather than isomorphic ones.
\begin{theorem}
\label{thm:main}
Let $V$ be any smooth quasi-projective variety and $h\in \bQ[x]$. 
The set of smooth projective families of polarized varieties over $V$ with a fixed Hilbert polynomial, 
a good minimal model and of maximal variation is birationally bounded. 
\end{theorem}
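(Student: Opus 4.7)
The plan is to reduce \autoref{thm:main} to \autoref{main:maxvar} by passing, via a relative MMP, to a birationally equivalent family whose fibers have semi-ample canonical; since the conclusion is only birational boundedness in the sense of \autoref{def:BB}, we are free to replace $(f\colon X\to V,\mathcal L)$ by any family over $V$ that is birationally equivalent to it.

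Given such a family with Hilbert polynomial $h$, maximal variation, and fibers admitting good minimal models, I would first replace $V$ by a dense open subset and, if necessary, by a finite cover of it, and then run a relative MMP over $V$. Using BCHM together with the available results on the existence of relative good minimal models in families of maximal variation, one obtains a birational map $X\dashrightarrow Y$ over $V$ and a relative good minimal model $\pi\colon Y\to V$. Each fiber $Y_v$ is the good minimal model of $X_v$, hence has at worst $\bQ$-factorial terminal singularities and semi-ample canonical divisor, and the variation of $\pi$ equals that of $f$. Since the set of abstract fibers $\{X_v\}$ is bounded as polarized varieties with Hilbert polynomial $h$, the canonical rings $R(X_v,\omega_{X_v})=R(Y_v,\omega_{Y_v})$ take values in a finite set of isomorphism classes, and one may equip $\pi$ with a polarization---for instance in the style of Viehweg \cite{Viehweg95}, by twisting the pullback of an ample line bundle from $V$ with a large power of $\omega_{Y/V}$---whose associated Hilbert polynomial lies in a finite list $\{h_1,\dots,h_N\}$ determined by $h$.

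For each $h_i$, I would then invoke a version of \autoref{main:maxvar} that allows $\bQ$-factorial terminal fibers: this yields boundedness, up to isomorphism, of polarized families $\pi\colon Y\to V$ with semi-ample relative canonical sheaf and maximal variation. Since $Y$ is birational to $X$ over $V$, this immediately translates into birational boundedness of $(f,\mathcal L)\to V$.

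The principal obstacle is therefore precisely the generalization of \autoref{main:maxvar} to allow fibers with canonical singularities. I expect this to be provable by the same strategy as in the paper: representability via the Hilbert functor of doubly embedded schemes, the boundedness of coarse moduli maps (\autoref{CB}), and Kov\'acs--Lieblich type descent all rest ultimately on pluri-canonical embeddings of the fibers and finite generation of their canonical rings, both of which remain available in the canonical singularity setting. Koll\'ar's KSBA formalism provides the singular analog of Viehweg's moduli $\mathfrak{M}^{\sa}_{h}$, Kresch's criterion still ensures compactifiability in the sense of \autoref{def:compact}, and the Arakelov-type inequalities underpinning \autoref{CB} extend to fibers with mild singularities by work of Viehweg--Zuo and Popa--Schnell. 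The bulk of the technical work is then to carry through \autoref{CB} carefully in this singular setting.
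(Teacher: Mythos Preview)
Your approach is genuinely different from the paper's, and it contains a real gap rather than just a difference in route.

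The paper does \emph{not} reduce \autoref{thm:main} to \autoref{main:maxvar}. Instead, both theorems are consequences of the same underlying machinery, applied to different moduli functors. Concretely, the paper observes that $\mathfrak{M}_h^{\gmin}$ is locally closed and bounded (\autoref{rk:lc}), so it can be approximated by the Hilbert functor of doubly embedded schemes $\mathfrak{H}^{d,l,k}_{\mathfrak{M}_h^{\gmin}}$, which is distinguished (\autoref{claim:good}). Then \autoref{cor:key} applies directly and yields birational boundedness. The passage to a good minimal model happens \emph{once}, at the level of the universal family $\mathfrak{X}\to H$ of this Hilbert functor, and is used only to produce a moduli functor $\mathfrak{M}''_{h''}$ compatible with variation in order to get weak bounds on maps $V\dashrightarrow \overline{H}$. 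The original smooth families are never replaced; the parametrizing scheme for birational boundedness is built from the smooth universal family $\mathfrak{X}$ itself.

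Your plan, by contrast, replaces each $(X\to V,\mathcal L)$ individually by a relative good minimal model $Y\to V$, and then asks for a version of \autoref{main:maxvar} with $\mathbb{Q}$-factorial terminal fibers. You correctly identify this extension as the principal obstacle, but it is not a minor technical point: the proof of \autoref{main:maxvar} in the paper goes through compactifiability of $\mathcal{M}_{\overline h}^{\sa}$ (Viehweg's moduli of \emph{smooth} polarized manifolds) and coarse boundedness for that specific stack. Rerunning this for a singular-fiber analogue would require constructing and analyzing a new moduli functor, its coarse space, and its stack---essentially redoing the whole argument in a harder setting. Moreover, your intermediate claim that the canonical rings $R(X_v,\omega_{X_v})$ ``take values in a finite set of isomorphism classes'' is incorrect: boundedness of $\{X_v\}$ means they live in a bounded family, not that finitely many isomorphism types occur. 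What you need is that the minimal models $Y_v$, together with a uniform choice of polarization, have Hilbert polynomial lying in a finite list; this is plausible but requires its own argument and does not follow from the sentence you wrote.

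The paper's route is both shorter and avoids the singular-fiber extension entirely: because the conclusion is only birational boundedness, one never needs a moduli-theoretic home for the minimal models themselves---it suffices to bound rational maps from $V$ to the parametrizing scheme $H$ of the smooth doubly-embedded Hilbert functor, and the minimal model of the universal family serves only as an auxiliary tool to produce the required weak bound.
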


 \subsubsection{Absence of a logarithmic reformulation.}
 It is  worth pointing out that boundedness results 
as in \autoref{thm:curve} cannot be readily obtained by reducing the problem 
to that of log-stable families (see \cite[\S.~8]{ModBook} for definitions and background). 
That is, if we endow a given family $f: U\to V$ corresponding to $\frM_{\overline h}^{\sa}$ 
(or any similarly-defined moduli of polarized varieties) with an auxiliary divisor $\Delta$ such 
that $f_U: (U, \Delta) \to V$ is log-stable, we cannot use a logarithmic analogue of 
\cite{KoL10} to conclude \autoref{thm:curve}. 
The reason is that one first 
needs to prove a boundedness result similar to \autoref{CB} for moduli maps of such log-stable families
through Arakelov-type inequalities (see \ref{Arakelov}). 
However the bound in such inequalities will necessarily encode information about the 
locus over which $(U, \Delta)$ fails to be relatively log-smooth (see \cite{KT21} for 
another context where this problem appears). Failure to establish \autoref{CB} 
following this strategy 
stems from the fact that a priori there is no uniform control over this locus, which 
can be arbitrary large as one considers all families in $\frM_{\overline h}^{\sa}$. 
In fact, an effective bound can be found for this relatively non-log-smooth locus 
as a consequence of \autoref{thm:curve}.  

Currently we are not aware of a formulation of \autoref{CB} where fibers
are assumed to be of an appropriately defined singular class. 
See \cite{KT21} and \cite{KT25} for related problems. 
  
\subsection{Structure of the paper.}
After introducing preliminary notions and background in \autoref{Section2-Prelim}, 
we define and construct all relevant notions of moduli functors in 
\ref{SS:functors}. The Hilbert functor of doubly embedded schemes 
is constructed in \autoref{ss:Hilb2}. The fact that this functor is representable plays an important 
role in our approximation strategy, 
through its connection to compatibility properties with variation (\autoref{def:comp}). This is the content of \autoref{ss:good}.
In \autoref{WBGeometry} we introduce various notions of boundedness 
for morphisms and more generally rational maps, which naturally 
arise in our approach. In \autoref{Section3-WB} we apply the constructions and 
results in earlier sections to bound morphisms to the parametrizing space 
of the Hilbert functor of doubly embedded schemes. The crucial technical input here is 
\autoref{lem1}. \autoref{Section5-Bounded} contains the proof 
of our main result; \autoref{main:maxvar}. The proof uses, among 
other things, a technical lemma contained in \cite{Taji20} for which 
we provide a self-contained exposition 
in the appendix (\autoref{Section5-Appendix}).

\subsection{Acknowledgements}
Both authors owe a special thanks to S\'andor Kov\'acs for fruitful discussions.
We would also like to thank Dori Bejleri, Jack Hall, J\'anos Koll\'ar, Max Lieblich, and Siddharth Mathur for answering our questions and helpful comments.
KA is especially grateful to the Sydney Mathematical Research Institute (SMRI) for their financial
support and excellent working conditions during his visit. KA was partially supported by NSF grants DMS-2140781 and DMS-2302550, the SMRI International Visitor Program, and a UCI Chancellor's Fellowship.  BT was partially supported by Australian Research Council Discovery 
Project grant DP240101934 ``Deformation 
of Singularities through Hodge Theory and Derived Categories".

\section{Moduli functors, approximation and variation}
\label{Section2-Prelim}

In this article a variety is an irreducible, reduced and finite type scheme over $\mathbb C$.  
For the basic definitions and properties of singularities, including those of \emph{semi-log-canonical singularities}, or \emph{slc} for short, 
we refer the reader to \cite{KollarSingsOfTheMMP}. 
We will generally follow \cite{ModBook} for notions
of stability of varieties and their 
moduli problems.

\subsection{Stability in flat families of varieties}

\begin{notation}
For a finite type morphism $f:{X\to B}$ of schemes $X$ and $B$, we define 
$\omega_{X/B} : = \mathcal H^{-n} (f^{!}\mathscr O_B)$, where $n=\dim (X/B)$.
When $X$ is $S_2$ and $G_1$ (or normal) and $B$ is Gorenstein (or smooth), we have $\omega_{X/B} \simeq \sO_X(K_X - f^*K_B)$, 
with $K_X$ and $K_B$ being the canonical 
divisors, cf.~\cite[Def.~1.3]{KollarSingsOfTheMMP}.
\end{notation}

\begin{notation}[Reflexive hulls]
Given a reduced scheme $X$ that is $S_2$ and a coherent sheaf $\sF$ locally free in codimension one, for any
  $m\in \bN$, we define $\sF^{[m]} : = \big( \sF^{\otimes m} \big)^{**}$ and $\bigwedge^{[m]}\sF:= ( \bigwedge^m \sF)^{**}$, where
  $(\sblank)^{**}$ denotes the double dual. We also set $\det (\sF) := \bigwedge^{[r]}\sF$, $r= \rank(\sF)$.
\end{notation}

\begin{notation}[Pullback]
\label{Pull}
For any morphisms of schemes $f: X\to B$ and $\psi: B' \to B$, $X_{B'}$ denotes $X \times_{B} B'$. 
Let $\psi': X_{B'}\to B'$ and $f': X'\to B'$ denote the naturally induced projections. 
Given an $\sO_X$-module sheaf $\sF$ on $X$, we use 
$\sF_{B'}$ to denote $(\psi')^*\sF$.
For a pair $(X, \sF)$, we sometimes use $(X, \sF)_B$ to denote 
$(X_{B'}, \sF_{B'})$.
\end{notation}

When $f:X\to B$ is a flat and projective morphism of schemes with connected, positive-dimensional geometric fibers, we 
say $f$ is a flat projective family or just a \emph{family}, to be concise.

\begin{definition}[Locally-stable flat families]
\label{def:LS}
Let  $f: X \to B$ be a flat projective family over a reduced scheme $B$. 
Assume that $\omega_{X/B}$ is locally free in codimension one and $X$ is $S_2$.
We say that $f$ is \emph{locally-stable}, 
if 
\begin{enumerate}
\item $\omega^{[m_0]}_{X/B}$ is invertible for some $m_0\in \bN$, and 
\item $X_b$ is slc, for every closed point $b \in B$. 
\end{enumerate}
We refer to $m_0$ as an \emph{index} for the family. 
\end{definition}

\begin{remark}[Base change]
\label{rem:BC}
Let $f:X\to B$ be a locally-stable family. 
According to \cite[Thm.~1]{Kol18} 
if the general fiber is normal, then, 
for every $m\in \bZ$, $\omega^{[m]}_{X/B}$ is flat over $B$, 
and for any morphism $\psi: B'\to B$, the isomorphism 
$(\omega^{[m]}_{X/B})_{B'}\simeq \omega^{[m]}_{X'/B'}$ holds 
and that 
\[ 
\big( f' \big)_* \omega^{[m]}_{X'/B'} \simeq \psi^* f_* \omega^{[m]}_{X/B} ,
\]
with $f'$ and $X'$ being as in \autoref{Pull}. 
\end{remark}

\begin{definition}[Locally-stable reduction]
\label{KolRed}
Let $f^0:U\to V$ be a flat projective family, with $U$ and $V$ being quasi-projective. 
We say $f^0$ has a \emph{locally-stable reduction}, if $f^0$ has an extension $f:X\to B$, where $X$, $B$ and $f$
are projective, $V\subseteq B$ and $U\subseteq X$ are open subsets, up to isomorphism, and with $X_V= U$, together 
with the following property: 
there is 
 a projective variety $B'$ with
 \begin{itemize}
 \item a generically finite, surjective morphism $g: B'\to B$,
 \item  a locally-stable family $f':X'\to B'$,
 \end{itemize} such that  $X'_{V'} \simeq X\times_V V'$, where  $V':= g^{-1}(V)$.
\end{definition}

\subsection{Relevant moduli functors of polarized schemes and stacks}
\label{SS:functors}
For a detailed background on moduli problems of projective varieties 
we refer to \cite{Viehweg95}. An extensive treatment of the theory of stacks 
can be found in \cite{LMB00} and \cite{Olsson16}. 

The moduli problem $\frM$ of polarized schemes (\cite[\S.1.1]{Viehweg95}) consists of \emph{objects}
$$
\frM({\bC}) = \{  [(Y,L)]  \; \big| \; \text{$Y$ is a projective reduced scheme and $L$ an ample line bundle}\} ,
$$
where $[(Y,L)]$ denotes the isomorphism class of the pair $(Y,L)$, and a functor $\frM : \mathfrak{Sch}_{\bC} \to 
\mathfrak{Sets}$ defined by 
\begin{equation*}
\begin{aligned}
\frM (V)  = &  \Big\{ \text{Pairs} \; (f: U\to V, \sL)\;  \bigm\mid  \text{$f$ is flat and projective, $\sL$ is invertible} \\ 
                   & \;\;  \;\;  \;\;  \;\;  \;\;  \;\;   \;\;  \;\;  \;\;  \;\;  \;\;  \;\;  \;\;  \;\;   \;\;  \;\;  \;\;  \left. \text{and $(U_v,\sL_v)\in \frM_{\bC}$, for all $v\in V$} \Big\} \right/ \sim ,
\end{aligned}
\end{equation*}
where the equivalence relation $\sim$ is given by
\begin{enumerate} 
\item\label{1}
$$
\begin{aligned}
\big(  f_1: U_1 \to V , \sL_1  \big)  \sim  \big( f_2: U_2 \to V , \sL_2   \big)   \iff & \text{there is a $V$-isomorphism $\sigma: U_1\to U_2$}  \\
        & \text{such that $ \sL_1 \simeq \sigma^* \sL_2 \otimes f_1^*\sB$}, \\
        & \text{for some line bundle $\sB$ on $V$}, or
\end{aligned}
$$
\item\label{2} there is $\sigma$ as in \autoref{1}, with $\sB =\sO_V$, and $\sL_1 \equiv_V \sigma^* \sL_2$, or 
\item $\sim$ is trivial, i.e., is defined by identity.
\end{enumerate}
We sometimes refer to the objects of this moduli 
problem using $\mathfrak M({\bC} )= \mathfrak M(\Spec \mathbb C) = \Ob(\frM)$.

We say a subfunctor $\frM' \subseteq \frM$ is a \emph{moduli functor}, 
if it is closed under arbitrary base-change, cf.~\cite[Def.~1.3]{Viehweg95}. 

\begin{terminology}
We refer to $\sL$ in $(f: U \to V, \sL) \in \mathfrak M(V)$ as a \emph{polarization} for $f$. 
\end{terminology}

For the notion of coarse moduli space associated to a given moduli functor (when it exists), 
we refer to \cite[Def.~1.10]{Viehweg95}.

\begin{definition}[\protect{\cite[Def.~2.2]{Hassett-Kovacs04}}]
\label{def:openclosed}
Let $\mathfrak F \subset \mathfrak M$ be a moduli subfunctor. We say $\mathfrak F$ is \emph{open}, if 
for every $(f: X \to B, \sL) \in \mathfrak M(B)$ the set 
$V=\{ b \in B \; |\;  (X_b, \sL_b) \in \Ob(\mathfrak F)  \}$ is open in $B$ and $(X_V \to V, \sL_V) \in \mathfrak F(V)$.
The moduli subfunctor $\mathfrak F\subset \mathfrak M$ is \emph{locally closed}, if   
for every $(f: X \to B, \sL) \in \mathfrak M(B)$, there is a (universal) locally closed subscheme $j: B^u \hookrightarrow B$
such that for every morphism $\phi: T\to B$ we have: $(X_T \to T, \sL_T) \in \mathfrak F(T)$, if and only if 
there is a factorization
$$
\xymatrix{
T \ar@/^5mm/[rr]^{\phi} \ar[r] & B^u \ar[r]^{j}  & B. 
}
$$
\end{definition}

We note that by definition $\mathfrak F \subset \mathfrak M$ is open, if and only if it is locally closed 
and $B^u$ is open.

\begin{notation}
Given a moduli functor of polarized schemes $\mathfrak F$, by $\mathfrak F_h$ 
we denote the moduli subfunctor whose objects $(Y,L)$ have $h$ as their Hilbert polynomial with respect to 
$L$. 
\end{notation}

\begin{definition}[\protect{\cite[Def.~1.15.(1)]{Viehweg95}}]
\label{def:bounded}
We say a moduli subfunctor $\mathfrak F_h \subset \mathfrak M$ is \emph{bounded}, if there is $a_0\in \bN$ such that, 
for every $(Y,L) \in \Ob (\mathfrak F_h)$ and any $a\geq a_0$, the line bundle $L^a$ 
is very ample and $H^i(Y, L^a)=0$, for all $i>0$.
\end{definition} 

\begin{remark}\label{rk:bounded}
By Matsusaka \cite{Mat72} every $\mathfrak M'_h \subset \mathfrak M$ where $\Ob(\mathfrak M'_h)$ only consists 
of smooth (polarized) varieties is bounded. 
\end{remark}


\begin{remark}[Separated moduli functors]
For the definition of a separated functor of polarized schemes 
we refer to \cite[Def.~1.15.(2)]{Viehweg95}.
When $\frM'\subset \frM$ is separated, locally-closed and bounded, and its objects have finite (or more generally proper) automorphisms, 
thanks to Keel--Mori \cite{KeelMori}, there is an algebraic coarse moduli space  of finite type 
associated to $\frM'$ (see \cite{Knu71} for a comprehensive treatment of algebraic spaces).
In particular its associated stack $\mathcal M'$ is a separated Deligne--Mumford stack of finite type. 
\end{remark}

\subsubsection{Moduli functors for non-canonically polarized smooth varieties}\label{sec:modulifunctors}
Following \cite[p.~12]{Viehweg95} from now on we will assume that all base schemes for moduli
functors are separated and of finite type. 

\begin{def-thm}[\protect{Viehweg's moduli of polarized manifolds with 
semi-ample canonical divisor, \cite[Thm.~1.13]{Viehweg95}}]
\label{def:sa}
Let $\overline h\in \bQ[x_1, x_2]$ with $\overline h(\bZ \times \bZ)\subseteq \bZ$.
The objects of the subfunctor $\frM_{\overline h}^{\sa} \subset \frM$ consist of 
polarized smooth projective varieties $(Y,L)$ with semi-ample $\omega_Y$ 
satisfying  $\overline h(\alpha, \beta)  = \chi(\omega_Y^{\alpha} \otimes L^{\beta})$, 
for any $\alpha, \beta\in \bN$. The relation $\sim$ is defined as in \autoref{1}. 
This is sometimes referred to as the \emph{doubly-polarized moduli functor of 
manifolds with semi-ample canonical bundle}. 
The functor $\mathfrak M^{\sa}_{\overline h}$
has a quasi-projective coarse moduli space $M^{\sa}_{\overline h}$. 
We denote the corresponding stack by $\mathcal M_{\overline h}^{\sa}$. 
\end{def-thm}

\begin{remark}\label{rem:P}
The moduli functor $\mathfrak P_{\h}^{\sa}$ in the introduction is defined by $\mathfrak M_{\h}^{\sa}/\equiv$, 
where $\equiv$ is the numerical equivalence relation \autoref{2}. 
In particular there is a natural map $\mathfrak M_{\h}^{\sa}\to \mathfrak P_{\h}^{\sa}$. 
As explained in \ref{background-SemiAmple}, 
we will not make use of $\mathfrak P_{\h}^{\sa}$.
\end{remark}

\begin{assumption}
In the current paper, with no loss of generality we will restrict ourselves to moduli functors over reduced base schemes.
\end{assumption}

\begin{definition}\label{def:gmm}
Given two quasi-projective varieties $U'$ and $V$, with $V$ being smooth, we say 
$U'$ is a \emph{family of good minimal models over $V$}, 
if there is a flat projective family 
$f': U'\to V$ and an integer $N\in \mathbb N$ such that, 
for every $v\in V$, the geometric fiber $U'_v$ has 
only canonical singularities 
and that the reflexive sheaf 
$\omega_{U'/V}^{[N]}$ is invertible and $f'$-semi-ample
(i.e. the natural morphism $(f')^*(f')_*\omega_{U'/V}^{[mN]} \to \omega_{U'/V}^{[mN]}$ 
is surjective for some $m\in \bN$). 
We sometimes refer to $f'$ as a \emph{relative good minimal model}.
When $V=\Spec \bC$, this coincides with the standard notion of a good minimal model
variety. 
A smooth projective family $f: U\to V$ is then said to have a good minimal model over $V$, 
if $U$ is birational to a good minimal model $U'$ over $V$.
\end{definition}

\begin{defnot}\label{def:gmmMod}
We denote the moduli functor of polarized smooth projective varieties with good minimal models 
with Hilbert polynomial $h$ by $\mathfrak M_h^{\gmin}$. The relation $\sim$ is defined by \autoref{1}.
Furthermore, we use the notation $\frM_h^{\sa}$, when each $(Y,L)\in \Ob(\frM_h^{\sa})$ has semi-ample canonical bundle, 
with $Y$ being smooth.
\end{defnot}
\begin{remark}
\label{rk:lc}
$\frM_h^{gmin}$ is an open \cite[Thm.1.2, Cor. 1.4]{HMX18} (and therefore locally closed), separated \cite{Mat-Mum64} and 
bounded (\autoref{rk:bounded}) subfunctor of $\frM$.
\end{remark}
\begin{remark}[Comparing $\frM_{\h}^{\sa}$ and $\frM_h^{\sa}$]
\label{rk:compare}
With $\overline h \in \bQ[x_1, x_2]$ as in \autoref{def:sa}, and $\sim$ as in \autoref{1}, by definition we have 
$$
\big(  f: X\to B, \sL  \big) \in \frM_{\overline h}^{\sa}  \implies  \big( f: X\to B  , \omega_{X/B}\otimes \sL\big) \in \frM_h^{\sa}  ,
$$
where $h(x) : = \overline h(x, x)$. 
\end{remark}

\subsubsection{A bounded moduli functor for families of good minimal models}
For certain bounded moduli functors of good minimal models  
one can construct an algebraic coarse space of finite type.
\begin{def-thm}[\protect{\cite[\S3]{Taji20} and references therein}]\label{def:boundedgmm}
For a positive integer $N$, we define the \emph{bounded} moduli subfunctor $\frM^{[N]}\subset \frM$ by: 
$(Y,L) \in \Ob(\frM^{[N]})$ if, 
\begin{enumerate}
\item \label{object1} $Y$ has only canonical singularities, 
\item \label{object2} $\omega_Y^{[N]}$ is invertible and semi-ample ($N$ is not necessarily the minimum such integer), and 
\item \label{object3} for all $a\geq 1$, the line bundle $L^a$ is very ample with $H^i(Y,L^a)=0$, for all $i>0$.
\end{enumerate}
The relation $\sim$ is defined by \autoref{1}. For any $h\in \bQ[x]$, the moduli functor $\frM^{[N]}_h$ is separated and 
has an algebraic space of finite type $M_h^{[N]}$ as its coarse moduli space. 
\end{def-thm}

\begin{remark}[\protect{\cite[Lect.~6]{CKM88}, \cite[Claim.~3.6]{Taji20}}]
Assuming that $V$ is smooth, for any $(f_U: U\to V) \in \frM^{[N]}_h(V)$, the  
integer $N$ in \autoref{def:boundedgmm} is 
an index for $f_U$ (see \autoref{def:LS}).
\end{remark}


\begin{remark}[\protect{\cite[Thm.~3.13]{Taji20}}]\label{rk:LineExists}
  For any family  $f: U\to V$ of good minimal models of index $N$ there are a line bundle $\sL$ 
  on $U$ (not unique)
  and $h\in \bQ[x]$ such that $(f: U \to V, \sL) \in \frM^{[N]}_{h}(V)$.
  \end{remark}

\subsubsection{Isomorphisms of polarized families after a base change}
We will need the following well-known fact about families of polarized varieties with finite automorphism groups, 
cf.~\cite[7.5]{PZ19} and \cite[Lem.~7.4]{KK10}. 
\begin{fact}\label{fact:LiftIsom}
Let $\mathfrak F\subset \frM$ be a separated moduli subfunctor,  
 $V$ any 
quasi-projective variety
and $(U_i, \sL_i)\to V$, $i=1,2$, two polarized flat families over $V$. 
Assume that, for $i=1,2$ and every $v\in V$, we have 
$| \Aut(U_i, \sL_i)_v |< \infty$. 
If for each $v\in V$ there is an isomorphism of polarized schemes $\psi_v: (U_1, \sL_1)_v \to (U_2, \sL_2)_v$, 
then there are a surjective, finite morphism $\sigma: V'\to V$, where $V'$ is a quasi-projective variety, 
and an isomorphism $(U_1, \sL_1)_{V'} \simeq_{V'}  (U_2, \sL_2)_{V'}$, that is compatible 
with each $\psi_v$, at a general closed point $v\in V$.
\end{fact}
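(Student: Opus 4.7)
My plan is to build the scheme of polarized isomorphisms and apply Zariski's Main Theorem. Let
$$I \;:=\; \underline{\Isom}_V\bigl((U_1,\sL_1),(U_2,\sL_2)\bigr)$$
denote the $V$-scheme representing polarized isomorphisms in the sense of $\frM$, i.e.\ for a $V$-scheme $T$ the set of pairs $(\sigma_T,\beta_T)$ with $\sigma_T\colon (U_1)_T\simeq (U_2)_T$ a $T$-isomorphism and $\beta_T\colon \sL_{1,T}\simeq \sigma_T^{*}\sL_{2,T}\otimes f_{1,T}^{*}\sB$ for some line bundle $\sB$ on $T$. Representability of $I$ by a scheme of finite type over $V$ is a standard consequence of Grothendieck's Hilbert scheme construction: graphs of isomorphisms form an open subscheme of the relative Hilbert scheme of $U_1\times_V U_2$, and the polarization condition defines a further locally closed subfunctor. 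By hypothesis the structure map $\pi\colon I\to V$ is surjective (each $\psi_v$ gives a point of $I_v$), and each fiber $I_v$ is a torsor under $\Aut(U_1,\sL_1)_v$, hence finite. Therefore $\pi$ is quasi-finite, surjective, and of finite type.

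Next I would pick an irreducible component $I_0\subseteq I_{\mathrm{red}}$ dominating $V$, which exists because $V$ is integral and $\pi$ is surjective with only finitely many components. The map $I_0\to V$ is then dominant and quasi-finite, so by Zariski's Main Theorem (Grothendieck's formulation) it factors as $I_0\hookrightarrow V' \xrightarrow{\sigma} V$ with $\sigma$ finite and the first arrow an open immersion. After replacing $V'$ by its normalization (still finite over $V$, since quasi-projective varieties are excellent), $V'$ is a normal quasi-projective variety and $\sigma$ is finite surjective.

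Finally, I extend the tautological isomorphism $\Psi_0\colon (U_1,\sL_1)_{I_0}\simeq (U_2,\sL_2)_{I_0}$, inherited from $I$, across the open dense inclusion $I_0\subseteq V'$. I would take the closure $\overline\Gamma$ of the graph of $\Psi_0$ inside the projective $V'$-scheme $(U_1\times_V U_2)_{V'}$ and show that the first projection $p_1\colon \overline\Gamma\to (U_1)_{V'}$ is an isomorphism: it is projective, birational, and quasi-finite, and with a normal target it is an isomorphism by Zariski's Main Theorem (achieved after passing, if needed, to a further finite cover so that $(U_1)_{V'}$ is normal). Composing the inverse of $p_1$ with the second projection yields a morphism $\Psi\colon (U_1)_{V'}\to (U_2)_{V'}$ extending $\Psi_0$, and the polarization-matching isomorphism extends from the dense open by continuity inside the appropriate $\sHom$ sheaf of invertible sheaves, giving the required $(U_1,\sL_1)_{V'}\simeq_{V'}(U_2,\sL_2)_{V'}$ of polarized families. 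Compatibility with each $\psi_v$ is automatic because for $v'\in V'$ above $v$ the fiber $\Psi_{v'}$ lies in the finite set $I_v$.

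The main obstacle will be the last extension step: verifying that the closure of the graph is actually the graph of a morphism, rather than merely a birational correspondence with finite fibers. Normality of $(U_1)_{V'}$ together with projectivity of the target over $V'$ is what makes Zariski's Main Theorem apply, but ensuring normality may require a further finite base change of $V'$ (which preserves finiteness over $V$). A conceptually equivalent route, which I would fall back on if the direct extension proves subtle for the polarization, is to encode both families as two morphisms $V\to \cM$ to the relevant moduli stack and use that, for stacks with finite inertia, two such morphisms that agree on points become isomorphic after pulling back along a finite cover — this reduces the problem to the same Isom-scheme construction above but cleanly isolates the twist by line bundles from $V$.
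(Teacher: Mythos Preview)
The paper does not prove this statement; it is recorded as a ``well-known fact'' and used as a black box. Your approach via the $\Isom$-scheme is the standard one, and the first part (representability, surjectivity, quasi-finiteness from finite automorphisms, choice of a dominant component, Zariski's Main Theorem factorization) is correct.

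The genuine gap is in your extension step. Taking the closure $\overline\Gamma$ of the graph and appealing to Zariski's Main Theorem for $p_1\colon\overline\Gamma\to (U_1)_{V'}$ requires two things you have not secured: first, that $p_1$ is quasi-finite (a graph closure over the boundary $V'\setminus I_0$ can degenerate to a positive-dimensional correspondence, not a graph); second, that $(U_1)_{V'}$ is normal. Your proposed remedy---``a further finite cover so that $(U_1)_{V'}$ is normal''---does not work: a finite base change on $V'$ does nothing to normalize the fibers of $U_1\to V$, and the statement imposes no normality on those fibers.

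The clean repair is the one you gesture at in your fallback, but you should recognize that it is not merely ``conceptually equivalent''---it supplies exactly the missing ingredient. In the paper's setting the relevant moduli functor is separated (Matsusaka--Mumford; see the references around \autoref{rk:lc}), equivalently the associated Deligne--Mumford stack has \emph{finite} (not just quasi-finite) diagonal. Hence $I\to V$ is proper, and being quasi-finite it is finite. Since $V$ is irreducible and $I\to V$ is finite surjective, some irreducible component of $I_{\mathrm{red}}$ maps onto $V$; take that component as $V'$. The tautological polarized isomorphism lives over all of $V'$ by construction, and no extension argument is needed.
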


\subsubsection{The Hilbert functor of strictly embedded schemes}
\label{ss:Hilb}
Let $\frM'_h\subset \frM$ be a bounded subfunctor.
By definition, there is $k\in \bN$ such that,
for every $(Y,L)\in \Ob(\frM'_h)$, the line bundle $L^k$ 
is very ample and $H^i(L^k)=0$, for every $i>0$. 
As a consequence, there exists $d\in \bN$ such that, for every $(Y,L)\in \Ob(\frM'_h)$, 
\begin{enumerate}
\item \label{FunProps1} the natural map $\phi_{|L^k|}: Y \to \bP^d$ is an embedding. 
\end{enumerate}
Assume further that $\frM'_h$ is locally closed, and define the \emph{Hilbert functor 
$\mathfrak H^{d,k}_{\frM'_{h}} : \mathfrak{Sch}_{\bC} \to \mathfrak{Sets}$ of strictly 
embedded schemes in $\frM'_h$} by: 
\begin{equation*}
\begin{aligned}
\mathfrak H^{d,k}_{\frM_h'}(B)  = &  \Big\{ (f: X\to B, \sL)\in \frM'_h(B)  \bigm\mid  \text{there is an isomorphism 
                 $i:\bP_B(f_*\sL^k)  \overset{\simeq\;}{\to}   \bP^d\times B$}   \Big\} 
\end{aligned}
\end{equation*}
so that, for the natural map $\psi_{|\sL^k/B|}: X\to \bP_B(f_*\sL^k)$, the pullback of the
composition $(i\circ\psi)_{X_b} : X_b \to \bP^d$ satisfies \autoref{FunProps1}, for each $b\in B$. 
In particular the function $b\mapsto h^0(\sL^k_b)$ is constant over $B$, equal to $d+1$.

\begin{observation}
We note that $\mathfrak H^{d,k}_{\frM'_h}$ is indeed a moduli functor:
Consider any morphism of reduced schemes $\pi: B'\to B$,
$(f: X\to B, \sL) \in \mathfrak H^{d,k}_{\frM'_h}$ and the naturally induced morphisms $\pi': X_{B'} \to X$ and $f': X_{B'}\to X$.
As $\sL^k$ is flat over $B$~\cite[Thm.~III.9.9]{Ha77}, $f_*\sL^k$ is locally free and that $R^if_*\sL^k=0$, for all $i>0$, cf.~\cite[Cor.12.9]{Ha77}.
Therefore, using (derived) base-change $\mathbf{L}\pi^* \mathbf{R}f_*\sL^k \simeq \mathbf{R}f'_*(\pi')^* \sL^k$, we have 
$\pi^*f_*\sL^k \simeq f'_* \pi^* \sL^k$. Thus, using functoriality of $\bP_B(\cdot)$, we then find the commutative diagram:
$$
\xymatrix{
& \bP_{B'} \big( f'_* (\pi^*\sL^k)  \big) \ar[ddl] \ar[rr]  &&  \bP_B(f_*\sL^k) \simeq \bP^d\times B \ar[ddr]  \\
& X_{B'} \ar[dl]_{f'}  \ar[rr]^{\pi'}   \ar[u]  &&  X \ar[dr]^{f}  \ar[u]    \\
B' \ar[rrrr]^{\pi} &&&&  B  .
}
$$
In particular, by the universal property of fiber products, there is a surjective morphism $\bP_{B'} (f'_*\pi^*\sL^k) \to  \bP^d\times B'$ 
implying that $\bP_{B'} (f'_*\pi^*\sL^k) \simeq  \bP^d\times B'$, 
 i.e. we have $\big( f', (\pi')^*\sL \big) \in \mathfrak H^{d,k}_{\frM'_{h}}(B')$.
\end{observation}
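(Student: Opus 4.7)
The plan is to verify that $\mathfrak H^{d,k}_{\frM'_h}$ is closed under arbitrary base change, which by \cite[Def.~1.3]{Viehweg95} is what it means to be a moduli functor (as a subfunctor of $\frM$). The strategy is to reduce the question to two independent pieces: first the known base-change stability of $\frM'_h$ itself, and then a cohomology-and-base-change argument that handles the additional Hilbert-embedding condition on $\sL^k$.

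First, given $(f\colon X\to B,\sL)\in\mathfrak H^{d,k}_{\frM'_h}(B)$ and a morphism $\pi\colon B'\to B$, I would form the pullback $(f'\colon X_{B'}\to B',\, (\pi')^*\sL)$, where $\pi'\colon X_{B'}\to X$ is the projection as in \autoref{Pull}. Since $\frM'_h$ is by hypothesis a moduli functor (open and bounded), this pullback already lies in $\frM'_h(B')$. So the only nontrivial point is the projective bundle condition defining $\mathfrak H^{d,k}_{\frM'_h}$.

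Second, I would check base change for $f_*\sL^k$. Here the boundedness of $\frM'_h$ (\autoref{def:bounded}) is essential: for each $b\in B$, the line bundle $\sL^k_b$ is very ample and satisfies $H^i(X_b,\sL^k_b)=0$ for $i>0$, with $h^0(X_b,\sL^k_b)=d+1$ independent of $b$. By cohomology and base change \cite[III.12.9]{Ha77} (or equivalently the derived base change formula $\mathbf{L}\pi^*\mathbf{R}f_*\sL^k\simeq\mathbf{R}f'_*(\pi')^*\sL^k$ using $R^if_*\sL^k=0$ for $i>0$), the sheaf $f_*\sL^k$ is locally free of rank $d+1$ and its formation commutes with arbitrary base change, giving a canonical isomorphism $\pi^*f_*\sL^k\simeq f'_*(\pi')^*\sL^k$.

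Third, I would apply the functoriality of the projective bundle $\bP(\cdot)$ to translate this into the desired statement about $\bP_{B'}$. Namely, the given trivialization $i\colon\bP_B(f_*\sL^k)\isom\bP^d\times B$ pulls back to
\[
\bP_{B'}\bigl(f'_*(\pi')^*\sL^k\bigr)\;\simeq\;\bP_{B'}(\pi^*f_*\sL^k)\;\simeq\;\bP_B(f_*\sL^k)\times_B B'\;\simeq\;\bP^d\times B',
\]
and compatibility with the relative map $\psi_{|\sL^k/B|}$ follows from naturality. This verifies $(f',(\pi')^*\sL)\in\mathfrak H^{d,k}_{\frM'_h}(B')$, completing the argument. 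The only point demanding care is ensuring flatness of $\sL^k$ over $B$ so that cohomology and base change applies cleanly; this follows from \cite[III.9.9]{Ha77} since $f$ is flat and $\sL^k$ is invertible. There is no real obstacle here — the observation is essentially a bookkeeping check that the extra datum (the trivialization of $\bP_B(f_*\sL^k)$) is preserved under base change, which is exactly the content of the projection formula for $\bP(\cdot)$.
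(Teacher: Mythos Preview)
Your proposal is correct and follows essentially the same route as the paper: both arguments use flatness of $\sL^k$, vanishing of $R^if_*\sL^k$ via cohomology-and-base-change, the derived base-change isomorphism $\pi^*f_*\sL^k\simeq f'_*(\pi')^*\sL^k$, and then functoriality of $\bP(\cdot)$ to conclude. Your final chain of isomorphisms $\bP_{B'}(f'_*(\pi')^*\sL^k)\simeq\bP_{B'}(\pi^*f_*\sL^k)\simeq\bP_B(f_*\sL^k)\times_B B'\simeq\bP^d\times B'$ is in fact a slightly cleaner packaging of what the paper phrases via the universal property of fibre products and a surjective morphism.
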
  

Now, let $(f^0:U\to V,\sL)\in \frM'_h(V)$ and set $V^0\subseteq V$ to be 
any open subset over which $(f^0_*\sL^k)|_{V^0}$ is free.
Denote the restriction of $f$ to $V^0$ by $f': U_{V^0}\to V^0$. 
By the assumption on $V^0$ we have $\bigoplus_{i=1}^{d+1} \sO_{V^0} \simeq f_*'\sL^k_{U_{V^0}}$.
After pulling back by $f'$ we find 
$$
\bigoplus_{i=1}^{d+1} \sO_{U_{V^0}} \simeq (f')^* f_*'\sL^k_{U_{V^0}} \longrightarrow  \sL^k_{U_{V^0}}.
$$
With the latter morphism being a surjection (by our assumption on $k$), we find the naturally induced map
\begin{equation}\label{embed}
U_{V^0} \hooklongrightarrow \bP_{V^0} ( f'_*\sL^k_{U_{V^0}} )  \simeq  \bP^d\times V^0, 
\end{equation}
i.e., the diagram 
$$
\xymatrix{
U_{V^0} \ar[d]  \ar@{^{(}->}[rr]^{\psi_{|\sL^k/V^0|}}  &&  \bP^d\times V^0 \ar[d]^{\pr_2}  \ar[r]^(.6){\pr_1} &  \bP^d \\  
V^0      \ar[rr]^=     &&    V^0
}
$$
commutes and $\sL^k \simeq_{V^\circ} \big(  \pr_1\circ \psi_{|\sL^k/V^0|} \big)^* \sO_{\bP^d} (1)$.

Next, using the assumption that $\frM'_h$ 
is locally closed, let $H^{d,k}_h\subseteq \Hilb^d_h$ be the universal locally 
closed subscheme for $\mathfrak H^{d,k}_{\frM'_h}$
and $f_{\X}: \X \subseteq \bP^d\times H^{d,k}_h \to H^{d,k}_h$  the universal 
object of $\mathfrak H^{d,k}_{\frM'_h}$, naturally defined by 
the restriction of the universal family over $\Hilb^d_h$ to $H^{d,k}_h$.
As such, for every $f^0$ and $V^0$ as above, there is a morphism $\phi_{V^0}: V^0\to H^{d,k}_h$ 
(unique up to isomorphism) such that 
\begin{equation}\label{eq:pb}
\Big( \X , \sL_{\X} \Big)_{V^\circ}  \simeq_{V^\circ} \big(U_{V^0} , \sL^k\big), 
\end{equation}
where $\sL_{\X}$ is defined by the restriction of $\pr_1^*(\sO_{\bP^d}(1))$, 
with $\pr_1: \bP^d \times H^{d,k}_h \to \bP^d$, to the universal family $\X$
(see \autoref{Pull}).

\subsubsection{The Hilbert functor of doubly embedded schemes} 
\label{ss:Hilb2}
Generalizing the constructions in \autoref{ss:Hilb} we can consider a moduli functor whose objects consist of objects of a 
bounded moduli functor that are equipped with multiple embeddings arising from various choices of 
exponents for the given polarization. In the current paper it suffices to consider the case of 
two embeddings. 

With $\frM'_h$ and $k,d\in\bN$ as in \autoref{ss:Hilb}, there is $l\in \bN$ such that, 
for every $(Y,L)\in \Ob(\frM'_h)$, the map $\phi_{|L^{k+1}|}: Y\to \bP^l$ is an embedding.
We thus similarly define 
\begin{equation*}
\begin{aligned}
\mathfrak H^{d,l,k}_{\frM_h'}(B)  = &  \Big\{ (f: X\to B, \sL)\in \frM'_h(B)  \bigm\mid  \text{there is an isomorphism}\\
               & \;\;\;\;\;\;\;\;\;\;\;\;\; \;\;\;\;\;\;\;\;\;\;\;\;  \;\;\;\;\;\;\;\;\;\;\;\;   \bP_B(f_*\sL^k) \times_B \bP(f_*\sL^{k+1}) \overset{\simeq\;}{\longrightarrow}   \bP^d\ \times \bP^l \times B  \Big\} .
\end{aligned}
\end{equation*}

For every $(f^\circ: U\to V,\sL) \in \frM'_h(V)$, we similarly set 
$V^\circ \subseteq V$ to be any open subset over which $f^\circ_*\sL^k$ and $f^\circ_*\sL^{k+1}$ 
are both free. Using the same notations 
as in \autoref{ss:Hilb}, similarly to \autoref{embed} and using the universal property
of fiber products, we thus find  
the naturally induced embedding over $V^\circ$:
$$
U_{V^\circ} \overset{\psi}{\hooklongrightarrow} \bP_{V^\circ}(f'_*\sL^k) \times_{V^\circ} \bP_{V^\circ}( f'_*\sL^{k+1} ) \cong \bP^d\times \bP^l \times V^\circ 
$$
such that $\sL^k \simeq_{V^\circ} \psi^*\big( \pr_1^* \sO_{\bP^d} (1) \big)$ and 
$\sL^{k+1} \simeq_{V^\circ} \psi^*\big( \pr_2^*\sO_{\bP^l} (1) \big)$. 
In particular we have 
\begin{equation}\label{eq:L}
\sL \simeq_{V^0} \psi^*\big( \pr_1^*\sO_{\bP^d} (-1)  \otimes \pr_2^* \sO_{\bP^l} (1)  \big)  .
\end{equation}

Again, similarly to \autoref{ss:Hilb}, and noting that the Hilbert functor of closed subschemes of 
$\bP^d\times \bP^l$ (with fixed Hilbert polynomial $h$) is represented by a scheme $\Hilb^{d,l}$
(resp. quasi-projective scheme $\Hilb^{d,l}_h$), we find 
\begin{enumerate}
\item the universal locally closed subscheme $H^{d,l,k}\subseteq \Hilb^{d,l}_h$ for 
$\mathfrak H^{d,l,k}_{\frak M'_h}$, and 
\item the associated universal family $f_{\X}: \X \overset{\eta}{\hooklongrightarrow}\bP^d \times \bP^l \times H_h^{d,l.k} \longrightarrow H_h^{d,l,k}$.
\end{enumerate}

Now, as in \autoref{ss:Hilb}, for $f^0: U\to V$ as above there are $\phi_{V^0}: V^\circ\to H_h^{d,l,k}$ and 
the isomorphism 
\begin{equation}\label{eq:pb2}
\X\times_{H_h^{d,l,k}} V^\circ \simeq_{V^\circ} U_{V^\circ}  .
\end{equation}
Furthermore, after replacing $H_h^{d,l,k}$ by an open subset if necessary,  $f_{\X}$ is polarized with 
$\sL_{\X}:= \eta^*\big( \pr_1^* \sO_{\bP^d} (-1) \otimes \pr_2^*\sO_{\bP^l}(1) \big)$, 
with respect to which, by \autoref{eq:L}, isomorphism \autoref{eq:pb2} holds 
as polarized families: 
\begin{equation}\label{isom:pb2}
\Big( \X  , \sL_{\X} \Big)_{V^\circ}  \simeq_{V^\circ} \big(U , \sL\big)_{V^0}.
\end{equation}

The above Hilbert functors are examples of \emph{representable} moduli functors. When a moduli functor $\mathfrak H$ of polarized schemes is representable, it is 
equipped with a universal family $(f_{\X}:  \X\to H, \sL_{\X})$ over a quasi-projective scheme $H$, 
i.e., for every $(f: U\to V, \sL)\in \mathfrak H(V)$, there is a morphism $\phi_V: V\to H$ 
of finite type such that $(U, \sL) \simeq_V (\X, \sL_{\X})_V$.

\begin{terminology}\label{term:approx}
We say a moduli functor $\frM'$ of polarized schemes can be \emph{approximated} by 
a representable functor $\mathfrak H$, if for every $(f: U\to V, \sL)$ 
there is an open subset $V^\circ \subseteq V$ such that 
$(U, \sL)_{V^\circ}\in \mathfrak H(V^\circ)$.
\end{terminology}

Using \autoref{term:approx}, isomorphism \autoref{isom:pb2} has the following interpretation. 

\begin{proposition}\label{prop:rep}
Let $\frM'_h$ be any locally closed and bounded moduli functor. Then, for every suitable choice of 
$k,d,l\in \bN$ as above, $\frM'_h$ can be approximated by the Hilbert functor 
of doubly embedded schemes $\mathfrak H^{d,l,k}_{\frM'_h}$ in $\frM'_h$.
\end{proposition}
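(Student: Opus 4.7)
The plan is to unwind the definitions and extract $k,d,l$ directly from the boundedness hypothesis, then exhibit the required open subset $V^\circ$ via standard local-freeness of higher direct images.

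First, I would use boundedness (\autoref{def:bounded}) to choose $k$. Since $\frM'_h$ is bounded, there exists $a_0 \in \bN$ such that for every $a \geq a_0$ and every $(Y,L)\in \Ob(\frM'_h)$ the line bundle $L^a$ is very ample and $H^i(Y,L^a)=0$ for all $i>0$. Fix any $k \geq a_0$; then both $k$ and $k+1$ satisfy these conditions simultaneously. Define
\[
d := h(k)-1 \qquad \text{and} \qquad l := h(k+1)-1,
\]
which are uniform in $(Y,L)\in \Ob(\frM'_h)$ because by cohomology vanishing $h^0(Y,L^k)=\chi(Y,L^k)=h(k)$ and similarly for $L^{k+1}$. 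This gives the candidate numerics for $\mathfrak H^{d,l,k}_{\frM'_h}$.

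Next, given any $(f\colon U\to V,\sL)\in \frM'_h(V)$, I would analyze the pushforwards $f_*\sL^k$ and $f_*\sL^{k+1}$. Since $f$ is flat, $\sL^k$ and $\sL^{k+1}$ are flat over $V$, and the fiberwise vanishing $H^i(U_v,\sL_v^k)=0=H^i(U_v,\sL_v^{k+1})$ for $i>0$ (valid for all $v\in V$ by the choice of $k$) together with cohomology and base change imply that $R^if_*\sL^k=0=R^if_*\sL^{k+1}$ for $i>0$ and that $f_*\sL^k$, $f_*\sL^{k+1}$ are locally free $\sO_V$-modules of ranks $d+1$ and $l+1$ respectively.

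Since a locally free sheaf of finite rank on a scheme is free on a sufficiently small non-empty open (for example an affine open trivializing both bundles simultaneously), I would choose such a $V^\circ \subseteq V$ with
\[
(f_*\sL^k)\big|_{V^\circ} \simeq \sO_{V^\circ}^{\oplus(d+1)} \qquad \text{and}\qquad (f_*\sL^{k+1})\big|_{V^\circ}\simeq \sO_{V^\circ}^{\oplus(l+1)}.
\]
Taking the associated projective bundles yields $\bP_{V^\circ}(f_*\sL^k|_{V^\circ}) \simeq \bP^d\times V^\circ$ and $\bP_{V^\circ}(f_*\sL^{k+1}|_{V^\circ})\simeq \bP^l\times V^\circ$, and their fibre product over $V^\circ$ is then canonically isomorphic to $\bP^d\times \bP^l\times V^\circ$. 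By the definition of $\mathfrak H^{d,l,k}_{\frM'_h}$ this precisely means $(U,\sL)_{V^\circ}\in \mathfrak H^{d,l,k}_{\frM'_h}(V^\circ)$, which in view of \autoref{term:approx} proves the proposition. The argument contains no serious obstacle: the only subtlety is ensuring that $k$ (and hence $d,l$) can be chosen independently of the object $(Y,L)\in\Ob(\frM'_h)$, which is exactly the uniformity provided by boundedness.
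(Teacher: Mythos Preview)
Your proof is correct and follows the same approach as the paper: the proposition is simply a restatement of the construction carried out in \autoref{ss:Hilb} and \autoref{ss:Hilb2}, and you have unpacked that construction accurately, including the explicit identification $d=h(k)-1$, $l=h(k+1)-1$ via cohomology vanishing and base change. The only cosmetic difference is that the paper leaves the existence of $V^\circ$ implicit in the phrase ``any open subset over which $f^\circ_*\sL^k$ and $f^\circ_*\sL^{k+1}$ are both free,'' whereas you spell out why such an open exists.
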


\begin{notation}\label{not:H}
To lighten the notations, we will use $H$ to denote the 
parametrizing scheme $H^{d,l,k}_h$ (or $H^{d,k}$) of 
$\mathfrak H^{d,l,k}_{\frM'_h}$ (respectively $\mathfrak H^{d,k}_{\frM'_h}$).
\end{notation}

\begin{terminology}\label{term}
We say a morphism $\phi: V^\circ \to H\subseteq \Hilb^{d,l}_h$ \emph{factors through $\mathfrak H^{d,l,k}_{\frM'_{h}}$}, 
if it is induced by a polarized family $(U_{V^\circ}\to V^\circ, \sL)$ 
satisfying the isomorphism 
$$
( U_{V^\circ}, \sL ) \simeq_{V^\circ}  (\X, \sL_{\X})_{V^\circ}   ,
$$
with the right-hand side being the pullback via $\phi: V^\circ \to H$ (see \autoref{Pull}).
\end{terminology}

\subsection{Arakelov inequalities for families of bounded varieties}
\label{Arakelov}
A key technical ingredient for bounding coarse moduli maps (see \autoref{def:WB})
is the so-called Arakelov inequalities. In \autoref{KovTaj} below we give an incarnation 
of this inequality that is most relevant for our purposes. 

\begin{definition}
Given sheaves of $\sO_B$-modules $\sV$ and $\sF$, a
\emph{$\sV$-valued system} means a splitting $\sF = \bigoplus \sF_i$ and a sheaf
homomorphism $\tau: \sF \to \sV \otimes \sF$ that is Griffiths-transversal.  If we
assume further that $\sV= \Omega^1_B(\log D)$, $\tau$ is integrable and $\sF$ is
locally-free, then $(\sF, \tau)$ is referred to as a \emph{logarithmic system
of Hodge bundles}.
\end{definition}

\begin{terminology}
Given a smooth projective morphism $f^\circ: U \to V$ of smooth quasi-projective varieties 
$U$ and $V$, we call a projective morphism 
$f: X\to B$ of smooth projective varieties $X$ and $B$ a \emph{smooth compactification}, if 
$B\backslash D \simeq V$, where $D$ is the divisorial part of the discriminant locus 
of $f$, is assumed to be of simple normal-crossing support, and $X\backslash f^{-1}D \simeq U$. 
\end{terminology}

\begin{proposition}\label{KovTaj}
Let $\frM'_h \subset \mathfrak M$ be any bounded and locally closed 
moduli subfunctor (with $\frM$ as defined in \ref{SS:functors})
with only smooth objects. 
For every $m\in \bN$ there is $b_m\in \bZ^{\geq 0} [x_1, x_2]$ such that
for every smooth quasi-projective curve $C^\circ$ and 
compactification $C$, with $d:= \deg(C\backslash C^\circ)$, 
and $( f^\circ: U\to C^\circ )\in \frM'_h(C^\circ)$, the inequality 
$\deg( f_*\omega^m_{X/C} ) \leq b_m(g(C),d)$ holds, 
for any smooth compactification $f:X\to C$, with $g(C)$
denoting the genus of $C$.
\end{proposition}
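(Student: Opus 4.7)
My plan is to combine a uniform bound on the rank of $f_*\omega^m_{X/C}$, coming from boundedness of $\mathfrak M'_h$, with an Arakelov-type upper bound coming from Hodge theory on the base curve. Set $D := C \setminus C^\circ$, regarded as a reduced divisor of degree $d$, so that $\deg \Omega^1_C(\log D) = 2g(C) - 2 + d$. Since $\mathfrak M'_h$ is bounded and all its objects are smooth polarized varieties with Hilbert polynomial $h$, there exists an integer $r_m$ depending only on $h$ and $m$ such that $h^0(Y, \omega_Y^m) \leq r_m$ for every $(Y,L) \in \Ob(\mathfrak M'_h)$. In particular, because $f: X \to C$ is a smooth compactification with SNC discriminant and $C$ is a smooth projective curve, the sheaf $f_*\omega^m_{X/C}$ is locally free of rank at most $r_m$.

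The next step is to place $f_*\omega^m_{X/C}$ inside a Hodge-theoretic framework on $(C,D)$. Using Viehweg's cyclic covering construction (cf.\ Viehweg, \emph{Quasi-projective moduli} and Esnault--Viehweg), one can construct, after a suitable finite base change $C' \to C$ ramified along $D$, an auxiliary smooth projective family $g : Z \to C'$ of dimension $\dim X_c + $ (covering data), whose pullback of $f_*\omega^m_{X/C}$ is a subsheaf of a Hodge-theoretic subbundle of the logarithmic Hodge bundle $R^\bullet g_* \Omega^\bullet_{Z/C'}(\log)$ associated to a polarized variation of Hodge structures on $C' \setminus D'$, where $D' = g^{-1}D$. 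In this manner $f_*\omega^m_{X/C}$ (or a twist of its pullback) becomes identified with a subsheaf of a Hodge subbundle in the canonical extension of a PVHS on $(C, D)$.

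The third step is to apply the Arakelov-type inequality for such subsheaves, as proved in various incarnations by Peters, Jost--Zuo and Viehweg--Zuo: any Hodge subbundle $\mathcal F$ of the canonical extension of a PVHS on $C^\circ$ of weight $w$ satisfies
\[
\deg \mathcal F \;\leq\; \frac{w \cdot \rank \mathcal F}{2}\cdot \big( 2g(C) - 2 + d \big).
\]
Combining this with the rank bound $\rank f_*\omega^m_{X/C} \leq r_m$ from the first step, pulling back appropriately to compensate for the cyclic cover $C' \to C$ (whose degree and ramification depend only on $m$ and $h$ by boundedness of $\mathfrak M'_h$), and using the fact that degree behaves linearly under base change, we obtain a bound of the form
\[
\deg f_*\omega^m_{X/C} \;\leq\; c_1(m,h) \cdot \big( 2g(C) - 2 + d \big) + c_2(m,h),
\]
for constants $c_1, c_2$ depending only on $m$ and $h$. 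This is a polynomial $b_m(g(C), d) \in \mathbb Z^{\geq 0}[x_1, x_2]$.

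The main obstacle I anticipate is the uniformity of the cyclic covering construction: the branch divisor, and hence the weight $w$ and rank of the auxiliary VHS, must be controlled independently of the individual family $(U, \sL) \in \mathfrak M'_h(C^\circ)$. This is exactly where boundedness of $\mathfrak M'_h$ plays its decisive role, by providing a uniform integer $a_0$ such that $\sL^{a_0}$ is very ample with vanishing higher cohomology for every fiber, so that all covering data can be chosen uniformly in terms of $h$ and $m$; the rest of the argument is then a direct application of the Hodge-theoretic Arakelov bound to a subbundle whose rank and weight are controlled only by $h$ and $m$.
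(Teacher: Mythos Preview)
Your proposal follows essentially the same two-step strategy as the paper: (i) obtain a uniform bound $r_m$ on $\rank f_*\omega^m_{X/C}$ from boundedness of $\mathfrak M'_h$, and (ii) feed this into a Viehweg--Zuo/Arakelov-type inequality to bound the degree in terms of $m$, $r_m$, $g(C)$, and $d$. The paper packages step (ii) by citing \cite{KT22} directly, which produces a polynomial $\overline b(m,r_m,g,d)$, and then substitutes the uniform rank bound $t_m$ to obtain $b_m$.

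The one place where the paper is more explicit than your sketch is step (i): you assert that boundedness of $\mathfrak M'_h$ gives a uniform bound on $h^0(Y,\omega_Y^m)$, but the definition of boundedness in the paper (Definition~\ref{def:bounded}) only controls cohomology of powers of the polarization $L$, not of $\omega_Y$. The paper bridges this gap by using the Hilbert-scheme approximation $\mathfrak H^{d,k}_{\mathfrak M'_h}$ to pull every family back from a fixed universal family $\mathfrak X \to H$, and then invoking Siu's invariance of plurigenera over $H_{\reg}$ to show that $(f_{\mathfrak X})_*\omega^m_{\mathfrak X_{\reg}/H_{\reg}}$ is locally free, so its rank on each component gives the required $t_m$. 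Your assertion is correct, but its justification requires precisely these ingredients. A minor technical point: in the Viehweg--Zuo construction the cyclic cover is taken on the total space (or via self-fiber-products), not via a base change $C'\to C$; your phrasing there is slightly off, though the conclusion you draw from it is right.
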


\begin{proof}
Following the constructions in \cite{Vie-Zuo01} and in particular \cite[Props.~2.22, ~5.1]{KT22}, 
for any such $f:X\to C$ and $m\in \bN$ there 
is a logarithmic system of Hodge bundles $(\sG= \bigoplus \sG_i , \theta)$, 
with $\sG_0\simeq \det\big( f_*\omega^m_{X/C} (-mD) \big)$, 
where $D:= C\backslash C^\circ$. 
By applying~\cite[p.~28]{KT22}, over one-dimensional base schemes, 
we find $\overline b\in \bZ^{\geq 0}[x_1, x_2, x_3, x_4]$ 
such that, for any $C^\circ, f^\circ$ and $f$ as above, we have 
\begin{equation}\label{eq:ARAK}
\deg\big( \det f_*\omega^m_{X/C} \big)  \leq  \overline b( m, r_m, g, d ) ,
\end{equation}
where $g:= g(C)$ and $r_m:=\rank (f_*\omega^m_{X/C})$. 
Furthermore, as $\frM'_h$ is bounded (\autoref{rk:bounded}), 
according to \autoref{ss:Hilb},
each $\Ob(\frM'_h)$ is isomorphic to a geometric fiber 
of the universal family $\X$ of $\mathfrak H^{d,k}_{\frM'_h}$ 
(for some $d,k\in \bN$). Moreover, over a suitable 
open subset $C'\subseteq C^\circ$, there is a morphism $\phi_{C'}: C'\to H$ (\autoref{not:H})
such that 
\begin{equation}\label{iso}
\X\times_H C' \simeq U_{C'} , 
\end{equation}
as in \autoref{eq:pb}. 
With no loss of generality we may assume that $H$ is reduced
and by induction on $\dim H$ we can assume that 
$\phi_C'(C')\not\subseteq H\backslash H_{\reg}$, with $H_{\reg}$ denoting 
the smooth locus of $H$. 
Now, by applying Siu's invariance of plurigenera~\cite{Siu98}
to $\X$ over $H_{\reg}$, denoted
by $X_{\reg}$, we find that $(f_{\X})_* \omega^m_{\X_{\reg}/ H_{\reg}}$ 
is locally free 
over each irreducible component. 
Set $t_m$ to be the maximum of the rank of each of these 
finite number of locally free sheaves. 

On the other hand, using \autoref{iso}, we have 
$$
\big(  \omega^m_{X_{\reg}/ H_{\reg}} \big)_{C''} \simeq  \omega^m_{U_{C''}/C''}. 
$$
This implies that, for every $m\in \bN$, $C^\circ$ and $f^\circ\in \frM'_h(C^\circ)$, 
the inequality $\rank(f_*\omega^m_{X/C}) \leq t_m$ holds. 
Therefore, for every $m\in \bN$, the quantity $b_m(g,d) := \overline b(m, t_m, g,d)$
forms an upper-bound for the right-hand side of \autoref{eq:ARAK}, 
that is we can take $b_m(x_1, x_2) = \overline b(m, t_m, x_1, x_2)$.
\end{proof}

\subsection{Variation and compatibility with the moduli}
\label{def:var}
Let $f: X\to B$ be a flat projective family. 
Following ~\cite{Viehweg83} and Kawamata~\cite[pp.~5--6]{Kawamata85}, 
we define $\Var(f)$ by the transcendence degree of a \emph{minimal closed 
field of definition} $K$ for $f$. We note that $K$ is the minimal (in terms of inclusion)
algebraically closed field in the algebraic closure $\overline{\bC(B)}$ of the function field 
$\bC(B)$ for which there is a $K$-variety $W$ such that 
$X\times_B \Spec\big( \overline{\bC(B)} \big)$ is birationally equivalent 
to $W\times_{\Spec(K)} \Spec(\overline{\bC(B)})$. 

By definition we have $0\leq \Var(f) \leq \dim B$. 
We say $f$ has \emph{maximal variation}, when $\Var(f)= \dim B$.

\begin{definition}[Compatibility with variation]
\label{def:comp}
Let $\mathfrak F\subset \frM$ be a moduli subfunctor with an algebraic coarse moduli space 
$M$. Given a smooth quasi-projective variety $B$, 
let $(f: X\to B, \sL)\in \mathfrak F(B)$, with the induced 
moduli morphism $\mu: B\to M$. 
We say that \emph{the formation of $M$ is compatible 
with $\Var(f)$}, or that \emph{$M$ computes $\Var(f)$}, if for every subvariety $T\subseteq B$ 
with the induced moduli map $\mu_T:=\mu|_T$,
we have 
$$
\Var(f_T)= \dim T \iff  \mu_T \text{\; is generically finite},  
$$
where $f_T$ denotes 
the pullback of $f$ over $T$.
\end{definition}

\begin{lemma}[Existence of compatible polarizations]
\label{lem:comp}
Let $f: U\to V$ be a family of good minimal models of index $N$ over 
$V$. There are 
\begin{enumerate}
\item an open subset $V^\circ \subseteq V$, an invertible sheaf $\sL^\circ$ on
$U^\circ:= U_{V^\circ}$, and 
\item $h''\in \bQ[x]$
\end{enumerate}
such that, with $f^\circ:=f|_{U^\circ}$, we have $(f^\circ: U^\circ\to V^\circ, \sL^\circ) \in \frM^{[N]}_{h''}(V^\circ)$ 
(for the definition of $\frM^{[N]}$, see \autoref{def:boundedgmm})
and that the formation of $M_{h''}^{[N]}$ is compatible with $\Var(f)$.
\end{lemma}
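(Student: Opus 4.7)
The plan is to couple the moduli map of $f$ with its birational variation by twisting the initial polarization with a high power of the relative pluricanonical sheaf $\omega^{[N]}_{U/V}$, which is intrinsic to the fibers. The moduli map associated with the twisted polarization will then factor through the variation fibration, making compatibility automatic.

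First, I would apply \autoref{rk:LineExists} to obtain an invertible sheaf $\sL$ on $U$ and $h \in \bQ[x]$ with $(f,\sL)\in\frM^{[N]}_h(V)$. Since $\omega^{[N]}_{U/V}$ is invertible and $f$-semi-ample, after shrinking $V$ to a dense open $V^\circ$ over which $f_*\sL^m$ and $f_*\omega^{[Nm]}_{U/V}$ are locally free with formation commuting with base change (\autoref{rem:BC}), I would set
$$
  \sL^\circ := \sL \otimes \omega^{[Na]}_{U/V}
$$
for $a$ sufficiently large. For $a$ large enough, $(\sL^\circ)^b$ is very ample with vanishing higher cohomology on each fiber for all $b\geq 1$, so $(f^\circ,\sL^\circ)\in\frM^{[N]}_{h''}(V^\circ)$ for the resulting Hilbert polynomial $h''\in\bQ[x]$.

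To establish compatibility, I would then show that, possibly after further shrinking $V^\circ$, the associated moduli morphism $\mu\colon V^\circ \to M^{[N]}_{h''}$ factors as a composition
$$
  V^\circ \xrightarrow{\,\pi\,} W \longrightarrow M^{[N]}_{h''},
$$
where $\pi$ is dominant with $\dim\pi(T) = \Var(f_T)$ for every subvariety $T\subseteq V^\circ$, and the second arrow is generically finite. Granting this factorization, the compatibility is immediate: for every such $T$, the restriction $\mu_T$ is generically finite if and only if $\pi|_T$ is, which in turn is equivalent to $\dim\pi(T) = \dim T$, i.e., to $\Var(f_T) = \dim T$.

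The main obstacle is establishing generic finiteness of $W \to M^{[N]}_{h''}$, equivalently the rigidity of the polarized pairs $(U_v,\sL^\circ_v)$ within a fixed birational class of underlying fibers. The pluricanonical factor $\omega^{[Na]}_{U/V}$ in $\sL^\circ$ pins down the canonical model of each fiber (a birational invariant for good minimal models), but the residual freedom in $\Pic$ of each fiber must be controlled separately: one needs to rule out continuous families of pairwise non-isomorphic polarized classes supported on (birationally) fixed underlying varieties. My plan is to handle this by combining the bounded and separated moduli structure of $\frM^{[N]}$ (\autoref{def:boundedgmm}) with the finiteness of polarized automorphism groups, and by invoking the technical descent lemma of \cite{Taji20} whose self-contained exposition is promised in \autoref{Section5-Appendix}; this lemma is designed precisely to convert pluricanonical positivity of $\det f_*(\sL^\circ)^m$ into ampleness on the coarse moduli, which supplies the required generic finiteness.
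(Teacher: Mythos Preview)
Your approach has a genuine gap, and it diverges substantially from the paper's argument.

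The twist $\sL^\circ = \sL \otimes \omega^{[Na]}_{U/V}$ does not achieve compatibility. Along a subvariety $T\subseteq V^\circ$ with $\Var(f_T)=0$, the fibers $U_t$ are birationally (hence, for good minimal models with canonical singularities, often literally) isomorphic, and $\omega^{[Na]}_{U_t}$ is intrinsic. But the factor $\sL_t$ can still move continuously in $\Pic(U_t)$, so the polarized isomorphism class $(U_t,\sL^\circ_t)$ may vary and the moduli map $\mu_T$ need not be constant. You recognize this (``the residual freedom in $\Pic$ of each fiber must be controlled separately''), but your proposed fix is circular: the ``technical descent lemma of \cite{Taji20} whose self-contained exposition is promised in \autoref{Section5-Appendix}'' \emph{is} the proof of \autoref{lem:comp} itself. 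Moreover, the appendix contains no statement converting positivity of $\det f_*(\sL^\circ)^m$ into ampleness on the coarse space; that is not how the argument proceeds.

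The paper's proof takes a different route that avoids the Picard problem altogether. It invokes Kawamata's structure theorem to produce, after a generically finite Galois base change $\sigma:\overline V\to V$, a morphism $\rho:\overline V\to V''$ with $\dim V''=\Var(f)$ and a family $f'':U''\to V''$ whose pullback along $\rho$ recovers $f$ (over an open set). One then chooses a polarization $\sL''$ on $U''$ via \autoref{rk:LineExists}, pulls it back to $\overline U$, symmetrizes over $G=\Gal(\overline V/V)$ to obtain a $G$-sheaf, and descends to a line bundle $\sL$ on $U_{V_\eta}$. Because $\sL$ is, by construction, pulled back from $V''$, the induced moduli map $\mu:V_\eta\to M^{[N]}_{h''}$ factors through $V''$ (after adjusting $\sL''$ by a $|G|$-th power to match Hilbert polynomials). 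Compatibility then follows from \autoref{Kaw3} and the Kodaira--Spencer characterization of variation (\autoref{OBSERVE}). The key point you are missing is that the polarization must be \emph{constructed by descent from the variation base $V''$}, not obtained by twisting an arbitrary initial polarization on $V$.
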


\begin{proof}
The proof is contained in \cite[\S3.2]{Taji20}. 
For reader's convenience 
we have reproduced the details in the appendix of this paper (\autoref{Section5-Appendix}). 
\end{proof}

\subsubsection{Compatibility for representable functors through change of polarization}
\label{ss:good}
\begin{assumption}[Irreduciblity]
\label{ass:irred}
In this section we will assume that for the morphism $f_{\X}:\X\to H$
the two schemes $\X$ and $H$ in \autoref{ss:Hilb2} and \autoref{not:H} are irreducible and reduced. 
\end{assumption}

\begin{assumption}\label{assump}
Let $\frM''_{h''}$ be a separated moduli functor with 
Hilbert polynomial $h''$ satisfying the following properties.
\begin{enumerate}
\item \label{finiteAut} $\big| \Aut(X',L') \big| < \infty$, for every $(X',L')\in \Ob(\frM_{h''}'')$.
\item \label{coarse} $\frM''_{h''}$ has an algebraic coarse moduli space $M''_{h''}$ of finite type
that is compatible with the 
variation of $f_{\X'}$ in the sense of \autoref{def:comp}.
\end{enumerate}
\end{assumption}

\begin{definition}[Distinguished moduli functors]
\label{def:good}
We say a representable moduli functor $\mathfrak H$ 
is \emph{distinguished} if for every smooth quasi-projective variety $V$, 
any $f\in \mathfrak H(V)$ has a locally-stable 
compactification (\autoref{KolRed}) and the universal family $f_{\X}: \X\to H$
is birational to a family $f_{\X'}: \X'\to H'$, over an open subset $H'\subseteq H$, 
such that $f_{\X'}$ can be equipped with a polarization $\sL'$ for which we have 
$\big(f_{\X'}, \sL'\big) \in \frM''_{h''}(H')$, where $\frM''_{h''}$ is a moduli functor 
satisfying the assumptions made in \autoref{assump}.
\end{definition}

\begin{observation}\label{obs:change}
Let $\mathfrak H$ be any representable moduli functor of varieties with 
good minimal models. Over an open smooth subset $H_1\subseteq H$, let $f_{\X'} : \X'\to H_1$ be a good
minimal model of index $N$ for the universal family $f_{\X}: \X\to H$ restricted to $H_1$. 
Then, by \autoref{lem:comp}, over a possibly smaller open subset $H_2\subseteq H_1$, 
$f_{\X'}$ 
can be equipped with a polarization $\sL'$ for which we have 
$\big((f_{\X'})_{H_2}, \sL'\big) \in \frM^{[N]}_{h''}(H_2)$. 
Furthermore $\frM^{[N]}_{h''}$ has an algebraic coarse moduli space $M^{[N]}_{h''}$ of finite type
satisfying \autoref{coarse}. 
\end{observation}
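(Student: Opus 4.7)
The plan is to obtain this observation essentially as a direct application of \lemmaref{lem:comp} to the universal family, followed by invoking the construction of the coarse moduli space from \definitionref{def:boundedgmm}. Since the hypotheses already hand us a relative good minimal model $f_{\X'}:\X'\to H_1$ of index $N$ over the smooth quasi-projective base $H_1$, there is essentially no new construction required; everything is a matter of verifying that the outputs of \lemmaref{lem:comp} assemble into the claimed statement.

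First, I would apply \lemmaref{lem:comp} with $(f,U,V)$ replaced by $(f_{\X'},\X',H_1)$. Since $H_1$ is smooth quasi-projective and $f_{\X'}$ is a family of good minimal models of index $N$, the lemma produces an open subset $H_2\subseteq H_1$, an invertible sheaf $\sL'$ on $(\X')_{H_2}$, and a polynomial $h''\in\bQ[x]$ such that $\big((f_{\X'})_{H_2},\sL'\big)\in\frM^{[N]}_{h''}(H_2)$. This is exactly the first part of the claim.

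Next, to obtain the coarse moduli space $M^{[N]}_{h''}$ of finite type, I would cite \definitionref{def:boundedgmm}, which states that for every $h''\in\bQ[x]$ the moduli functor $\frM^{[N]}_{h''}$ admits an algebraic coarse moduli space of finite type. Compatibility of the formation of $M^{[N]}_{h''}$ with $\Var(f_{\X'})$ in the sense of \definitionref{def:comp} is, by construction, the final assertion of \lemmaref{lem:comp}, so condition~\eqref{coarse} of \autoref{assump} holds automatically. Condition~\eqref{finiteAut} is satisfied because objects of $\frM^{[N]}_{h''}$ are polarized varieties with only canonical singularities and ample polarization, hence have finite automorphism groups by standard results (e.g.\ Matsusaka--Mumford).

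The only point that requires any care is the implicit consistency check that the irreducibility assumption \autoref{ass:irred} on $\X$ and $H$ survives the restriction to $H_2$ and the passage to the birational model $\X'$; this is a routine verification, since passing to an open subset preserves irreducibility, and a relative good minimal model $\X'\to H_1$ of an irreducible family over an irreducible base is itself irreducible. I do not anticipate any real obstacle: the entire content of the observation is the remark that the machinery of \lemmaref{lem:comp} and \definitionref{def:boundedgmm} applies verbatim with the universal family in place of a generic $f:U\to V$, yielding a distinguished polarization with the desired compatibility property.
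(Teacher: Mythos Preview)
Your proposal is correct and follows essentially the same approach as the paper: the observation has no separate proof in the paper and is justified inline by citing \autoref{lem:comp} and \autoref{def:boundedgmm}, which is exactly what you do. One minor point: the observation only asserts condition~\autoref{coarse}, not \autoref{finiteAut}; your discussion of finite automorphisms is extraneous here (the paper defers that to \autoref{claim:good}, citing \cite[Rk.~3.12]{Taji20} rather than Matsusaka--Mumford), but this does not affect the correctness of your argument.
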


\begin{proposition}[Approximation by distinguished functors]
\label{claim:good}
For any $h\in \bQ[x]$, let $\frM'_h$ be a locally closed and bounded moduli
functor of smooth projective varieties admitting good minimal models
with Hilbert polynomial $h$. Then, there 
are $d,l,k\in \bN$ for which $\mathfrak H^{d,l,k}_{\frM'_h}$ 
is a distinguished moduli functor approximation of 
$\frM'_h$.
\end{proposition}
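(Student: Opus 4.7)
The plan is to verify the two conditions packaged in \autoref{def:good} by assembling the results already developed in \autoref{ss:Hilb2} and \autoref{ss:good}. The existence of integers $d,l,k$ giving an approximation of $\frM'_h$ in the sense of \autoref{term:approx} is a direct invocation of \autoref{prop:rep}: boundedness of $\frM'_h$ guarantees that $L^{k},L^{k+1}$ are very ample without higher cohomology for every $(Y,L)\in\Ob(\frM'_h)$ for $k$ sufficiently large, whence suitable $d,l$ exist. The real content of the proposition is the verification that $\mathfrak H^{d,l,k}_{\frM'_h}$ is distinguished.

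For the polarization/birational-modification condition, I would apply \autoref{obs:change} to the representable functor $\mathfrak H^{d,l,k}_{\frM'_h}$. By hypothesis the geometric fibers of the universal family $f_{\X}:\X\to H$ are smooth projective varieties admitting good minimal models, so running relative MMP over an open smooth subset $H_1\subseteq H$ yields a relative good minimal model $f_{\X'}:\X'\to H_1$ of some index $N$. \autoref{lem:comp}, applied to $f_{\X'}$, then produces a further open $H'\subseteq H_1$ and a polarization $\sL'$ on $\X'_{H'}$ such that $((f_{\X'})_{H'},\sL')\in \frM^{[N]}_{h''}(H')$ for the resulting Hilbert polynomial $h''$, and such that the formation of $M^{[N]}_{h''}$ is compatible with $\Var(f_{\X'})$. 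Setting $\frM''_{h''}:=\frM^{[N]}_{h''}$, both items of \autoref{assump} hold: objects of $\frM^{[N]}_{h''}$ have finite polarized automorphism groups (as $\omega_Y^{[N]}$ is invertible and $L$ is ample, so any polarized automorphism preserves two polarizations, forcing finiteness), and $M^{[N]}_{h''}$ is an algebraic coarse moduli space of finite type by \autoref{def:boundedgmm}, compatible with variation by the conclusion of \autoref{lem:comp}.

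For the locally-stable compactification condition, given $(f:U\to V,\sL)\in\mathfrak H^{d,l,k}_{\frM'_h}(V)$, I would proceed in two steps. First, extend $V$ to a smooth projective $\overline V$ with simple normal-crossings boundary, and extend $U$ to a projective family $\overline f:X\to\overline V$: the factorization morphism $\phi_V:V\to H$ through $\mathfrak H^{d,l,k}_{\frM'_h}$ (\autoref{term}) extends after resolving indeterminacies on the boundary, and $X$ is then obtained as the pullback of the universal family. Second, pass to a generically finite surjective cover $g:\overline V'\to\overline V$ (via semistable reduction-type arguments to control monodromy on the boundary) and run relative MMP on the pullback family to produce a family $f':X'\to\overline V'$ whose fibers are slc and with $\omega_{X'/\overline V'}^{[m_0]}$ invertible for some $m_0$; the fact that the fibers of $f$ over $V$ are already smooth with good minimal models ensures that the restriction $f'$ over $V':=g^{-1}(V)$ agrees with the pullback of $f$, matching \autoref{KolRed}.

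I expect the second condition to be the main technical obstacle: producing the slc, $\bQ$-Gorenstein-flat compactification requires coordinating relative MMP with a semistable-reduction style base change. The flexibility built into \autoref{KolRed} — allowing the locally-stable model to exist only after a generically finite cover of the base — is what makes this feasible without requiring the much stronger statement of existence of a KSBA-type stable limit directly over $\overline V$. Combining this with the polarized birational model produced via \autoref{obs:change} establishes that $\mathfrak H^{d,l,k}_{\frM'_h}$ is distinguished.
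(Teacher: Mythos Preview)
Your approach to the approximation step and to the birational-model/polarization step matches the paper's proof: you invoke \autoref{prop:rep} for the former and \autoref{obs:change} (which packages \autoref{lem:comp}) for the latter, exactly as the paper does. The paper records the finiteness of $\Aut(Y,L)$ for $(Y,L)\in\Ob(\frM^{[N]}_{h''})$ by citing \cite[Rk.~3.12]{Taji20}; your sketch (``preserves two polarizations'') is not quite a proof, since $\omega_Y^{[N]}$ is only semi-ample and hence not a polarization, but this is a minor point.

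The substantive divergence is in the locally-stable compactification. The paper dispatches this in one line by citing Abramovich's result \cite[Cor.~A.2]{Deng22}, which produces the required compactification via weak semistable (toroidal) reduction. Your proposed construction, by contrast, runs relative MMP on the pullback family after a base change. This step is problematic: the fibers of $f$ over $V$ are smooth but are \emph{not} assumed to be minimal (they merely \emph{admit} good minimal models), so relative MMP will in general contract loci in those fibers and break the isomorphism $X'_{V'}\simeq X\times_V V'$ demanded by \autoref{KolRed}. The point of the Abramovich/Abramovich--Karu approach is that toroidal/semistable reduction already yields slc fibers over the boundary and a $\bQ$-Gorenstein relative dualizing sheaf \emph{without} touching the smooth fibers over the interior; no MMP is needed, and invoking it is where your argument would fail.
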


\begin{proof}
By \autoref{prop:rep} it suffices to show that $\mathfrak H^{d,l,k}_{\frM'_h}$ is distinguished. 
According to Abramovich \cite[Cor.~A.2]{Deng22} 
every $f\in \frM'_h(V)$ has a locally-stable reduction. 
Now, using $\frM^{[N]}_{h''}$ in \autoref{obs:change}, it remains to 
show that the objects of $\frM_{h''}^{[N]}$ 
satisfy \autoref{finiteAut}. The latter has been shown in 
\cite[Rk.~3.12]{Taji20}. 
\end{proof}

\begin{example}\label{ex:good}
The two functors $\frM_h^{\gmin}$ and $\frM_h^{\sa}$ defined in \autoref{def:gmmMod} 
are examples of functors satisfying the assumptions 
of \autoref{claim:good} (see \autoref{rk:lc}).
\end{example}

\begin{notation}[Compactification of the universal family of $\mathfrak H^{d,l,k}_{\frM'_h}$]
\label{not:compact}
With $\frM'_h$ as in \autoref{claim:good}, in the situation of \autoref{obs:change}
let $\overline H$ be a compactification of $H$
and $D_{\overline H}\subset \overline H$
a reduced divisor such that $\overline H\backslash D_{\overline H}\hookrightarrow H_2$.
We denote a smooth compactification of $f_{\X}$ over $\overline H\backslash D_{\overline H}$ by 
$f_{\overline \X}: \overline \X\to \overline H$. 
By the construction above we have a finite type morphism 
$\mu: H_2\to M_{h''}''$. We may assume $M''_{h''}$ is reduced. 
Using the fact that $M''_{h''}$ is of finite type \autoref{coarse}, after replacing $\overline H\backslash D_{\overline H}$ by a smaller open subset 
we will assume that $M''_{h''}$ is a quasi-projective variety. 
Let $\overline M_{h''}''$ denote a projective compactification of $M_{h''}''$.
By a slight abuse of notation, we will denote the pullback of 
$f_{\X'}$ to $\overline H\backslash D_{\overline H}$ by $f_{\X'}$.
\end{notation}
\section{Weak boundedness notions and geometric consequences}
\label{WBGeometry}

In this subsection we introduce 
notions of boundedness, starting from coarse set-theoretic ones 
and concluding with a key 
geometric notion of boundedness for certain rational maps in 
\autoref{FACT2}.

\begin{definition}[Set-theoretic boundedness]
\label{SWB}
Let $H$ be any scheme and $S\subseteq H$ a subset. We say $S$ is set-theoretically bounded inside $H$, if there 
is a finite type subscheme $W\subseteq H$ containing $S$ as a subset.
\end{definition}

\begin{set-up}\label{prelim}
Let $V$ and $M$ be any two quasi-projective varieties and $\overline M$ a proper compactification of $M$.
Assume that $\lambda_{\overline M}$ is an ample line bundle on $\overline M$.
Let $b_{\lambda_{\overline M}}\in \bZ^{\geq 0}[x_1, x_2]$. 
\end{set-up}

The following definition of a \emph{weakly-boundedness morphism} is the same as \cite[Def.~2.4]{KoL10} 
but with a few 
technical differences that reflect relative divergence of   
arguments and context in the current article.

\begin{definition}[Weakly-bounded morphisms]
\label{def:WB}
In the setting of \autoref{prelim}
we say that a given morphism $\phi: V\to M$ of finite type is \emph{weakly-bounded} 
with respect to $\lambda_{\overline M}$ and $b_{\lambda_{\overline M}}$, 
if it satisfies the following condition: For every smooth quasi-projective curve $C^0$ 
and any finite type morphism $C^0 \to V$, the 
naturally induced morphism $\phi_C: C\to \overline M$ from a smooth projective compactification $C$ 
of $C^\circ$ satisfies the inequality 
\begin{equation}\label{eq:WB}
\deg \big(  \phi_C^* \lambda_{\overline M} \big) \leq b_{\lambda_{\overline M}}(g(C),d) ,
\end{equation}
where $g(C)$ denotes the genus of $C$ and $d:=\deg(C\backslash C^0)$.
We sometimes refer to $b_{\lambda_{\overline M}}$ as a \emph{weak bound}.
\end{definition}

\begin{definition}[Weakly-bounded stacks]
Let $\mathcal M$ be a separated Deligne--Mumford stack of finite type with a quasi-projective 
coarse moduli space $M$. If every morphism $\mu_V: V\to M$ factoring through 
$\mathcal M$ is weakly-bounded, then we say $\mathcal M$ is weakly-bounded. 
\end{definition}

\begin{notation}\label{not:frakC}
Let $V$ be a smooth quasi-projective variety. 
Given a moduli functor $\mathfrak M'\subseteq \mathfrak M$, we use
$\mathfrak C_{V, \mathfrak M'}$ to denote the set of all polarized 
$(f: U\to V) \in \mathfrak M'(V)$.
\end{notation}

\begin{definition}[Coarsely-bounded families]\label{def:CB}
Let $\mathfrak M'\subseteq \mathfrak M$ be a moduli functor with an algebraic 
coarse moduli space $M$ of finite type. We say that a subset $\mathfrak C\subseteq \mathfrak C_{V, \mathfrak M'}$
is coarsely-bounded, if there 
is a scheme $W$ and a morphism $\Phi: W\times V\to M$, both of finite type, such that 
for every morphism $\mu_V: V\to M$ arising from an element of $\mathfrak C$ 
, there is a closed point $w\in W$ for which we have $\Phi_{ \{w\} \times V } = \mu_V$.
\end{definition}

\begin{definition}[Polarized boundedness]\label{PolB}
In the setting of \autoref{def:BB}, assuming that each member of $\mathfrak C_V$ 
is polarized, then we say $\mathfrak C_V$ is bounded, if there is a line bundle $\sL_{\mathcal Y}$
on $\mathcal Y$ such that for every $(f^0: U\to V, \sL) \in\mathfrak C_V$ we have 
$(\mathcal Y, \sL_{\mathcal Y})_{ \{ w \} \times V } \simeq_V (U, \sL)$.
\end{definition}

The following theorem is due to Kov\'acs--Lieblich \cite[Thm.~1.7]{KoL10} but for weakly-bounded stacks.
However, it is easy to see from the proof (\cite[p.~605]{KoL10}) that 
this important result can be formulated in terms of coarse boundedness. 

\begin{theorem}[\protect{\cite[Thm.~1.7]{KoL10}}]\label{thm:KL}
Let $\mathcal M'$ be any compactifiable stack (\autoref{def:compact}) associated to a moduli functor $\mathfrak M'$.
If $\mathfrak C\subseteq \mathfrak C_{V, \mathfrak M'}$ 
is coarsely-bounded over $V$, then $\mathfrak C$ is bounded. 
\end{theorem}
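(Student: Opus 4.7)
The plan, essentially adapting the original Kov\'acs--Lieblich argument to the coarse-boundedness hypothesis of \autoref{def:CB}, is to lift the parameterized family of moduli maps to the compactified stack $\overline{\mathcal{M}}$ provided by \autoref{def:compact}, and then pull back the universal family of $\mathcal{M}'$. Compactifiability yields an open immersion $j\colon \mathcal{M}' \hookrightarrow \overline{\mathcal{M}}$ into a separated Deligne--Mumford stack proper over $\Spec \bC$, and hence an induced open immersion $M \hookrightarrow \overline{M}$ of coarse spaces. By coarse-boundedness we are given a finite-type morphism $\Phi\colon W \times V \to M \hookrightarrow \overline{M}$ such that every moduli map $\mu_f\colon V \to M$ attached to an $f \in \mathfrak{C}$ arises as some restriction $\Phi|_{\{w\} \times V}$.

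First I would form the fibre product $\mathcal{N} := (W \times V) \times_{\overline{M}} \overline{\mathcal{M}}$. Since the coarse-space morphism $\overline{\mathcal{M}} \to \overline{M}$ is proper with finite inertia, so is the projection $\pi\colon \mathcal{N} \to W \times V$; Keel--Mori then provides a coarse space $N$ such that $N \to W \times V$ is a finite morphism of finite-type schemes. For each closed point $w \in W$ that corresponds to some $f \in \mathfrak{C}$, the lift $\widetilde{\mu}_f\colon V \to \mathcal{M}' \hookrightarrow \overline{\mathcal{M}}$, which exists tautologically because $\mu_f$ is a moduli map, induces a section of $N \to W \times V$ along $\{w\} \times V$.

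Next I would invoke standard representability of the functor of sections. Sections of the finite morphism $N \to W \times V$, taken relative to the projection $W \times V \to W$, form a $W$-scheme $W^{\flat} \to W$ of finite type, with components bounded by the degree of $N$ over $W \times V$. The open condition that such a section factor through the preimage of $\mathcal{M}' \subset \overline{\mathcal{M}}$ cuts out an open subscheme $W'' \subseteq W^{\flat}$, over which one obtains a tautological morphism $\Psi\colon W'' \times V \to \mathcal{M}'$. Pulling back the universal family of $\mathcal{M}'$ (together with its universal polarization, in the polarized setting of \autoref{PolB}) via $\Psi$ gives a finite-type morphism $\mathcal{Y} \to W'' \times V$ such that every $f \in \mathfrak{C}$ is realized as $\mathcal{Y}|_{\{w''\} \times V} \simeq_V U$ (with matching polarization where applicable), thereby verifying \autoref{def:BB}.

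The hard part will be the finite-type representability of the scheme of sections in the previous step, since $V$ is only quasi-projective rather than proper and the usual Hom-scheme formalism does not apply verbatim. One route forward is to compactify $V \hookrightarrow \overline{V}$ (taking a resolution if needed) and to apply the valuative criterion to the proper stack $\overline{\mathcal{M}}$ in order to extend each lift $V \to \mathcal{M}'$ uniquely to $\overline{V} \to \overline{\mathcal{M}}$; the classical Hom-scheme formalism for proper source and target then yields a finite-type parameter space. Alternatively, one may argue directly that sections of the finite morphism $N \to W \times V$ over each fibre $\{w\} \times V$ correspond \'etale-locally to choices of connected components of $N \times_W \{w\} \to V$, so that the relative scheme of sections is \'etale over a finite-type $W$-scheme and hence itself of finite type.
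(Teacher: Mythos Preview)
The paper does not prove this statement itself; it cites \cite[Thm.~1.7]{KoL10} and remarks (in the sentence preceding the theorem) that the argument there, written for weakly-bounded stacks, goes through once one substitutes the coarse-boundedness hypothesis of \autoref{def:CB}. So there is no proof in the paper to compare against; your proposal is an attempted reconstruction of the Kov\'acs--Lieblich argument.

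The overall architecture you sketch is right in spirit, but there is a genuine gap in the passage from sections of $N$ to a morphism into $\mathcal M'$. You form $\mathcal N=(W\times V)\times_{\overline M}\overline{\mathcal M}$, take its coarse space $N$, and parameterize sections of the finite morphism $N\to W\times V$ by a finite-type $W$-scheme $W^\flat$. You then assert that over the open locus $W''\subseteq W^\flat$ where the universal section lands in the preimage of $\mathcal M'$, ``one obtains a tautological morphism $\Psi\colon W''\times V\to\mathcal M'$.'' This does not follow: the universal section is a morphism $W''\times V\to N$, and $N$, being the coarse space of $\mathcal N$, admits no canonical morphism back to $\mathcal N$ or to $\overline{\mathcal M}$. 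A section of $N\to W\times V$ need not lift to a section of the stack morphism $\mathcal N\to W\times V$, and it is precisely such a stack-level section that is required to pull back the universal family of $\mathcal M'$. Already for $\overline{\mathcal M}=BG$ with $G$ nontrivial and $V$ a punctured curve, distinct maps $V\to BG$ induce the same coarse-space section. The actual Kov\'acs--Lieblich argument avoids this by working with a finite surjective morphism from a \emph{scheme} to $\overline{\mathcal M}$ (available for separated Deligne--Mumford stacks) rather than with the coarse space, so that maps to this scheme cover genuinely yield maps to $\overline{\mathcal M}$.

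Two smaller points. First, the ``hard part'' you flag---representability of sections of $N\to W\times V$ with $V$ non-proper---is not actually the obstacle: sections of a \emph{finite} morphism are representable by a finite-type scheme regardless of properness of the base, essentially because they are cut out by algebra-homomorphism conditions on a coherent $\sO$-algebra. Second, your route (a) through compactifying $V$ needs more care: the valuative criterion for the proper stack $\overline{\mathcal M}$ extends maps over spectra of DVRs, but when $\dim V>1$ this only handles codimension-one points of $\overline V\setminus V$, and the local extensions need not glue to a global morphism $\overline V\to\overline{\mathcal M}$ without further modification of $\overline V$.
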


\begin{notation}\label{not:smooth}
Given a smooth quasi-projective variety $V$ as above, let $\overline V$ be a 
projective compactification. 
By $B$ we will denote a strong desingularization of $\overline V$, i.e.
for the desingularization map $\pi: B \to \overline V$ the restriction
 $\pi^{-1}|_V$ is an isomorphism. By an abuse of notation 
 we will also denote the image of $\pi^{-1}|_V$ by $V$. 
 We further define the reduced divisor $D: = B \backslash V$ and assume that it has simple normal 
 crossing support. 
\end{notation}

\begin{remark}[Indeterminacy and weak-boundedness via the Picard scheme]
Given $B$ and $V$ as above, a set of weakly-bounded morphisms 
from $V$ to $\overline M$ does not necessarily extend to a 
set of morphisms from $B$ to $\overline M$ (let alone weakly-bounded ones). 
For the Picard scheme $\Pic_B$
however, this extension issue for morphisms is not relevant, thanks to extensions 
of reflexive sheaves over closed subsets of $\codim_B \geq 2$. 
This extra flexibility allows us to in particular consider the notion 
of weak-boundedness for rational maps. 
\end{remark}


\begin{definition}[Weakly-bounded rational maps]
\label{def:ratWB}
In the setting of \autoref{def:WB}, we say a rational map $\phi: V\dashrightarrow \overline M$
is weakly-bounded if, for a given closed subset $\Gamma\subset V$, \autoref{eq:WB} holds for every $C^0\to V$ as in \autoref{def:WB}
whose image in $V$ is not contained in the union of the indeterminacy locus of $\phi$
and $\Gamma$. 
\end{definition}

\begin{notation}[Extensions of invertible sheaves]
\label{not:e}
Let $\phi: B \dashrightarrow W$ be a rational map
of projective schemes. Assume that $B$ is smooth. 
Given a line bundle $\sL$ on $W$, by $(\phi^*\sL)_e$ we denote  
the unique extension of $\phi^*\sL$, where it is defined, to an invertible 
sheaf on $B$.
\end{notation}

In the following proposition we generalize \cite[Prop.~2.6]{KoL10} to the setting of 
weakly-bounded rational maps. 

\begin{proposition}
\label{FACT2}
In the setting of \autoref{prelim}, \autoref{not:smooth} 
and \autoref{not:e}, let $S_V$ be the set of all weakly-bounded rational maps from $V$ to $\overline M$. 
Then, 
\begin{enumerate}
\item\label{item:kelid0} the set 
$$
S_{\Pic_B} : = \Big\{ \big[  (\phi^*\lambda)_e \big] \; \big| \; \phi\in S_V  \Big\} \subset \Pic_B
$$
is set-theoretically bounded by a subscheme $Y\subset \Pic_B$. 
Moreover, 
\item\label{item:kelid2} there are a reduced scheme $W$ of finite type
$W$ and a rational map $\Phi: W\times V \dashrightarrow \overline M$ 
such that, for every $\phi \in S_V$, there is a closed point $w\in W$ with 
$\Phi_{\{ w \} \times V } = \phi$.
\end{enumerate}
\end{proposition}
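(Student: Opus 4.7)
The plan is to mirror Kov\'acs--Lieblich's strategy for morphisms \cite[Prop.~2.6]{KoL10}, adapting it to rational maps by exploiting the fact that any $\phi \in S_V$ extends uniquely to a rational map $B \dashrightarrow \overline M$ whose indeterminacy locus $I(\phi) \subset B$ has codimension at least $2$ (since $B$ is smooth). A generic complete-intersection curve in $B$ therefore avoids $I(\phi)$ as well as the per-$\phi$ exceptional locus $\Gamma_\phi$ from \autoref{def:ratWB}, so the weak-boundedness hypothesis can be invoked curve-by-curve inside $B$. Write $n := \dim B$ throughout.

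For \autoref{item:kelid0}, I would first bound the N\'eron--Severi class of $(\phi^*\lambda)_e$ uniformly in $\phi \in S_V$. Fix a very ample divisor $H$ on $B$, and for each varying very ample class $H' \in N^1(B)$ form a generic smooth complete-intersection curve $C_{H'} := H' \cap H \cap \cdots \cap H$ (with $n-2$ factors of $H$). By Bertini, $C_{H'}$ avoids $I(\phi) \cup \Gamma_\phi$ and meets $D := B \backslash V$ transversally in $d := C_{H'} \cdot D$ points, while its genus $g(C_{H'})$ is determined by $[H']$ via adjunction. Setting $C_{H'}^0 := C_{H'} \backslash (C_{H'} \cap D) \to V$, the rational map $\phi$ restricts to a morphism which extends to $\phi_{C_{H'}} : C_{H'} \to \overline M$, and \autoref{def:ratWB} yields
\[
(\phi^*\lambda)_e \cdot H' \cdot H^{n-2} \;=\; \deg_{C_{H'}}\!\bigl(\phi_{C_{H'}}^*\lambda\bigr) \;\le\; b_{\lambda_{\overline M}}\!\bigl(g(C_{H'}),\, d\bigr),
\]
with right-hand side depending only on $[H']$. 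Varying $H'$ (and analogously the remaining factors) through a basis of $N^1(B)$, and using that complete-intersection $1$-cycles span $N_1(B) \otimes \mathbb Q$, bounds the pairing of $(\phi^*\lambda)_e$ against a generating set of $N_1(B)$; by numerical duality, the class $[(\phi^*\lambda)_e] \in \mathrm{NS}(B)$ is forced into a finite set $\Sigma$. The subscheme $Y := \bigsqcup_{\alpha \in \Sigma} \Pic^\alpha(B)$ of $\Pic_B$, being a finite disjoint union of torsors under the abelian variety $\Pic^0(B)$, is then of finite type and contains $S_{\Pic_B}$, establishing \autoref{item:kelid0}.

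For \autoref{item:kelid2}, I would pass to the Hilbert scheme of graph closures in $B \times \overline M$. For each $\phi \in S_V$ the closure $\overline{\Gamma_\phi} \subset B \times \overline M$ is an integral subscheme of dimension $n$, birational to $B$ via $\pi_1 := p_1|_{\overline{\Gamma_\phi}}$. Equipping $B \times \overline M$ with the ample polarization $\Lambda := p_1^*H \otimes p_2^*\lambda$, I would argue that the Hilbert polynomial of $\overline{\Gamma_\phi}$ with respect to $\Lambda$ is controlled, via the projection formula, by the intersection numbers $H^i \cdot (\pi_1)_*(\pi_2^*\lambda)^{n-i}$ on $B$: for $n-i = 1$ this pushforward equals $(\phi^*\lambda)_e$, so the intersection is bounded by \autoref{item:kelid0}; for $n-i \ge 2$ one writes $\pi_2^*\lambda \sim \pi_1^*(\phi^*\lambda)_e - E$ on a resolution of $\overline{\Gamma_\phi}$, with $E$ a $\pi_1$-exceptional divisor supported over $I(\phi)$, and expands via projection formula to reduce to self-intersections of $(\phi^*\lambda)_e$ (determined by its bounded NS-class) together with correction terms involving $E$. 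Hence only finitely many Hilbert polynomials occur, so $\{[\overline{\Gamma_\phi}] : \phi \in S_V\}$ is contained in a finite union $W_0$ of components of $\Hilb(B \times \overline M)$. Taking $W := (W_0)_{\mathrm{red}}$ and restricting the universal subscheme over $W$ to $W \times V \times \overline M$ produces, on the open locus where the universal subscheme remains the graph of a rational map over $W \times V$, the desired rational map $\Phi : W \times V \dashrightarrow \overline M$, with $\Phi_{\{w\} \times V} = \phi$ when $w = [\overline{\Gamma_\phi}]$. The main technical obstacle I anticipate is controlling the exceptional correction $E$ uniformly across $\phi \in S_V$: since $E$ depends on $\phi$, bounding its self-intersections directly is not immediate, and I expect to handle this by refining the curve-testing argument of \autoref{item:kelid0} to also control the relevant discrepancies of the graph resolutions, or equivalently by first passing to a modification $B' \to B$ on which a resolution can be taken in a uniform family.
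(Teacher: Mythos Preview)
Your approach to \autoref{item:kelid0} differs from the paper's but is essentially sound. You bound the full N\'eron--Severi class of $(\phi^*\lambda)_e$ by testing against a spanning set of complete-intersection curves (implicitly invoking hard Lefschetz to know that classes of the form $H'\cdot H^{n-2}$ span $N_1(B)_{\bQ}$, and using nonnegativity of $\deg\phi_C^*\lambda$ for the lower bound). The paper instead appeals to the boundedness criterion \cite[Thm.~XIII.3.13(iii)]{SGA6}, which needs only two inputs: an upper bound on $(\phi^*\lambda)_e\cdot A^{n-1}$ for a \emph{single} very ample $A$ (obtained exactly as you do, via weak-boundedness on a general complete-intersection curve), and the inequality $(\phi^*\lambda)_e^2\cdot A^{n-2}\geq 0$. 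The latter is checked by restricting to a general complete-intersection surface $T\subset B$, where $(\phi^*\lambda)_e|_T\simeq((\phi|_T)^*\lambda)_e$ is globally generated in codimension one and hence has nonnegative self-intersection. This is slightly more economical and avoids any appeal to hard Lefschetz.

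Your approach to \autoref{item:kelid2} has a genuine gap, which you correctly flag. Bounding the Hilbert polynomial of the graph closure $\overline{\Gamma_\phi}\subset B\times\overline M$ requires controlling $(\pi_1^*H)^i\cdot(\pi_2^*\lambda)^{n-i}$ for all $i$, and your expansion $\pi_2^*\lambda\sim\pi_1^*(\phi^*\lambda)_e-E$ introduces a $\phi$-dependent exceptional divisor whose higher self-intersections are not governed by anything you have established; curve-testing sees only $1$-cycles, and there is no single modification $B'\to B$ resolving all $\phi\in S_V$ at once. (One could in fact rescue the graph approach by observing that both $\pi_1^*H$ and $\pi_2^*\lambda$ are nef on a resolution of $\overline{\Gamma_\phi}$ and applying the Khovanskii--Teissier inequalities to bound the degree directly, then using Chow varieties rather than Hilbert schemes; but this is not the route you took.) The paper bypasses graphs entirely: once \autoref{item:kelid0} produces a finite-type $Y\subset\Pic_B$, a rational map $B\dashrightarrow\overline M\hookrightarrow\bP^k$ is exactly the datum of a line bundle $L$ with $[L]\in Y$ together with sections $s_0,\ldots,s_k\in H^0(B,L)$, not all zero. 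By \cite[Lem.~2.9]{KoL10} the functor of such tuples is representable by a reduced finite-type scheme $W'$, and its universal object yields $\Phi':W'\times V\dashrightarrow\bP^k$; pulling back through $\overline M\times V\hookrightarrow\bP^k\times V$ then gives the required $W$ and $\Phi$. This completely avoids the uncontrolled exceptional loci that obstruct your argument.
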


\begin{proof}
Set $n:= \dim B$ and let $A$ be a very ample divisor in $B$. 
According to \cite[Thm.~XIII.3.13(iii)]{SGA6} to prove 
\autoref{item:kelid0} it suffices to find $b\in \bR^{\geq 0}$ 
such that for every $\phi\in S_V$ we have: 
\begin{enumerate}
\item\label{upper} $(\phi^*\lambda_{\overline M})_e \cdot A^{n-1} \leq b$ .
\item\label{lower} $(\phi^*\lambda_{\overline M})^2_e \cdot A^{n-2}\geq 0$ . 
\end{enumerate}

We note that when $\dim B=1$, then this immediately follows from 
\autoref{def:WB}, by taking $b: = b_{\lambda_{\overline M}} (g(C),d)$.
Therefore, we may assume that $\dim B>1$.

To prove \autoref{upper} we use the following claim. 
\begin{claim}\label{claim:invariance}
Let $i: B \hookrightarrow \bP^m$ be an embedding with $A\simeq i^*\sO_{\bP^m}(1)$.
There is a finite set $\overline G$ such that 
for every smooth projective curve $C\subset B$, cut out by an
$(n-1)$-tuple  $(A_1, \ldots, A_{n-1})$ of members of 
$|\sO_{\bP^m}(1)|$ restricted to $B$, we have $g(C)\in \overline G$.
\end{claim}
\noindent
\emph{Proof of \autoref{claim:invariance}.} 
We use 
the universal family $\X^u$ of complete intersection curves in $\bP^m$
defined by members of $|\sO_{\bP^m}(1)|$:
$$
\xymatrix{
\X^u \ar[dr]   \ar@{^{(}->}[r]^(.2){\subset}   & \bP^m \times \prod^{n-1} \bP\big( H^0( \sO_{\bP^m}(1) ) \big)    \ar[d]   \\
            & \prod^{n-1} \bP\big( H^0( \sO_{\bP^m}(1) ) \big) .
}
$$
(Explicitly $\X^u$ is defined by the common zeros of $(n-1)$ equations $\sum_{i=0}^{m+1} s_i f_{ij}$, $j=1, \ldots, n-1$, where
 $\{ s_i\}_{i=0}^{m+1}$ is a basis for $H^0(\sO_{\bP^m}(1))$ and $\{ f_{ij}  \}_{i=0}^{m+1}$ is a set of 
 linear parameters for the $j$-th factor in $\prod^{n-1} \bP( H^0(\sO_{\bP^m}(1)) )$.) 
 We then restrict this family to $B$ and consider the induced morphism 
 $$
 F: \mathfrak Y^u:= \X^u \cap \Big( B \times \prod^{n-1} \bP\big( H^0( \sO_{\bP^m}(1) ) \big) \Big)  
     \longrightarrow \prod^{n-1} \bP\big( H^0( \sO_{\bP^m}(1) ) \big) .
 $$
The claim now follows from the invariance of genus over the smooth
locus of each irreducible component of $\mathfrak Y^u$.  \qed \\ 

Define $b:= \max_{g\in \overline G} b_{\lambda_{\overline M}}(g, d)$, 
where $d:= A^{n-1}\cdot D$. For every $\phi\in S_V$, 
let $C\subset B$ be any smooth projective curve as in \autoref{claim:invariance} and 
additionally satisfying the assumptions made in \autoref{def:ratWB}. 
As such, we have $\phi^*_C\lambda_{\overline M} = (\phi^*\lambda_{\overline M})_e\big|_C$, 
and thus find that 
$$
\big(\phi^* \lambda_{\overline M} \big)_e \cdot A^{n-1} 
\leq b_{\lambda_{\overline M}}(g(C),d) \underbrace{\leq}_{g(C)\in \overline G} b ,
$$
as required for \autoref{upper}.

For \autoref{lower}, let $\phi$ be any member of $S_V$. 
Take $T\subset B$ to be a smooth complete-intersection surface 
defined by sufficiently general members of the linear system $|A|$ so that 
the indeterminacy of $\phi|_T$ is of $\codim_T \geq 2$. 
For such $T$ we have 
$$
\big(\phi^* \lambda_{\overline M}  \big)_e  \Big|_T   \simeq \Big(  \big( \phi\big|_T \big)^* \lambda_{\overline M} \Big)_e  .
$$
In particular, we find 
\begin{equation}\label{eq:power}
\big(\phi^*\lambda_{\overline M} \big)^2_e  \cdot A^{n-2} =   \Big(   \big(  (\phi|_T )^*\lambda_{\overline M}  \big)_e  \Big)^2 .
\end{equation}
As $\big(  (\phi|_T)^*\lambda_{\overline M} \big)_e$ is globally generated in codimension one, the right-hand side 
of \autoref{eq:power} is non-negative, establishing \autoref{lower}, and therefore \autoref{item:kelid0}.

Item \autoref{item:kelid2} follows from \autoref{item:kelid0} together with \cite[Lem.~2.9, p.~593]{KoL10}.  
More precisely, we first assume that $\overline M = \bP^k$, 
$\lambda_{\overline M} =\sO_{\bP^k}(1)$. According to \cite[Lem.~2.9]{KoL10} 
the functor  
that assigns to $Y$ sets of tuples $(\sL; s_0,\ldots, s_k)$, 
where the numerical class $[\sL]$ belongs to $Y$ and $s_i\in H^0(B, \sL)$, 
at least one of which is non-zero, is representable by
$W'\times B$, where $W'$ is a reduced scheme of finite type. 
Its universal object then defines the desired map $\Phi': W'\times V\dashrightarrow \bP^k$. 

More generally, with a fixed embedding $\overline M \hookrightarrow \bP^k$, we consider 
the map $W'\times V \dashrightarrow \bP^k \times V$, defined 
by $(w', v) \mapsto  \big(\Phi' (w', v) , v\big)$. 
After pulling back through the natural embedding $\overline M\times V  \hookrightarrow \bP^k \times V$, 
we find the scheme $W$ of finite type, 
equipped with the rational map $W\times V\dashrightarrow \overline M$
satisfying the conditions in \autoref{item:kelid2}.
\end{proof}
\begin{set-up}\label{prologue}
We work in the setting of \autoref{def:WB}, \autoref{not:smooth} and \autoref{def:ratWB}. 
Let $\overline H$ be a projective scheme equipped with a morphism $\mu: \overline H \to \overline M$
of finite type.
Set $\overline S$ to be a set of rational maps from $V$ to $\overline H$ whose composition 
with $\mu$ is generically finite (where defined) and weakly-bounded. 
\end{set-up}
\begin{lemma}[Pullback of parameterization schemes]
\label{lem:Factors}
In the setting of \autoref{prologue} there are a scheme $W_{\overline H}$ of finite type
and a rational map $\Psi: W_{\overline H} \times V \dashrightarrow \overline H$
such that for every $\phi\in \overline S$, there is a closed point 
$w_{\overline H}\in W_{\overline H}$ for which we have $\Psi_{ \{ w_{\overline H} \}  \times V } = \phi$.
\end{lemma}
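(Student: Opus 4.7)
The plan is to first reduce to \autoref{FACT2} applied to the compositions $\{\mu\circ\phi\}_{\phi\in\overline S}$, producing a parameterization of the rational maps $\mu\circ\phi: V\dashrightarrow\overline M$, and then to promote this parameterization back up to $\overline H$ via a relative Hilbert scheme on the fibre product $\overline H\times_{\overline M}(-)$.

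Applying \autoref{FACT2} to the set $\overline S':=\{\mu\circ\phi \mid \phi\in\overline S\}$ of weakly-bounded rational maps from $V$ to $\overline M$ produces a reduced scheme $W$ of finite type together with a rational map $\Phi: W\times V \dashrightarrow \overline M$ such that each $\mu\circ\phi$ is realised as $\Phi_{\{w\}\times V}$ for some closed point $w\in W$. Let $U\subseteq W\times V$ be the domain of $\Phi$, and form the fibre product $Z_0 := U\times_{\overline M}\overline H$ via $\Phi|_U$ and $\mu$. Since $\overline H$ is projective and $\overline M$ is proper, the finite-type morphism $\mu$ is itself proper, so $Z_0\to U$ is proper; taking the closure of $Z_0$ inside $W\times B\times \overline H$ (with $B$ as in \autoref{not:smooth}) yields a closed subscheme $Z\subseteq W\times B\times\overline H$ that is projective over $W$. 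For each $\phi\in\overline S$ associated with $w\in W$, the graph $\Gamma_\phi\subset V\times\overline H$ of $\phi$ lies inside the fibre $Z_w$, and its closure $\overline\Gamma_\phi$ is an irreducible component of $Z_w$ of dimension $\dim V$, since the generic finiteness of $\mu\circ\phi$ forces $\phi$ itself to be generically finite onto its image.

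The key step is to parameterize the $\overline\Gamma_\phi$'s by a scheme of finite type. Fix a relative projective embedding of $W\times B\times\overline H$ over $W$, so that $Z\to W$ becomes projective. By generic flatness together with Noetherian induction, $W$ admits a finite stratification by locally closed reduced subschemes over each of which $Z$ is flat, and hence the Hilbert polynomial of the fibres is constant. Consequently only finitely many Hilbert polynomials can appear among the fibres $Z_w$, and so only finitely many polynomials $Q_1,\dots,Q_m$ occur as Hilbert polynomials of top-dimensional irreducible components of such fibres. Setting $W_{\overline H}:=\bigsqcup_{j=1}^{m}\Hilb^{Q_j}(Z/W)$, each $\Hilb^{Q_j}(Z/W)$ is projective over $W$, so $W_{\overline H}$ is of finite type, and every $\overline\Gamma_\phi$ corresponds to a closed point $w_{\overline H}\in W_{\overline H}$. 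Pulling back the universal family produces a closed subscheme $\mcZ\subseteq W_{\overline H}\times V\times\overline H$ whose projection to $W_{\overline H}\times V$ is birational; composing the rational inverse of this projection with the projection to $\overline H$ yields the required rational map $\Psi: W_{\overline H}\times V \dashrightarrow \overline H$ with $\Psi_{\{w_{\overline H}\}\times V}=\phi$.

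The hard part is the finite-type property of $W_{\overline H}$: since $\mu$ need not be finite, one cannot directly apply \autoref{FACT2} to $\overline S$ viewed as rational maps to $\overline H$, because no a priori bound on $(\phi^*\lambda_{\overline H})_e$ is available. The generic finiteness hypothesis on $\mu\circ\phi$ is precisely what guarantees that the graphs $\overline\Gamma_\phi$ are top-dimensional components of fibres of the proper map $Z\to W$, after which generic flatness yields the desired bound on the relevant Hilbert polynomials.
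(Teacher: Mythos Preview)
Your overall plan—apply \autoref{FACT2} to the compositions $\mu\circ\phi$ and then lift the resulting parameterization back up to $\overline H$—is the same as the paper's, but the lifting mechanism you choose is different and contains a genuine gap.

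The assertion that $\overline\Gamma_\phi$ is an irreducible component of $Z_w$ is unjustified and is false in the generality of \autoref{prologue}. Generic finiteness of $\mu\circ\phi$ only tells you that $\mu$ restricted to the closure of $\phi(V)$ is generically finite; it gives no control over the full fibre $\mu^{-1}\big((\mu\circ\phi)(\eta_V)\big)$, which can be positive-dimensional through $\phi(\eta_V)$. Concretely, take $\overline H=\bP^1\times\bP^1$, $\overline M=\bP^1$, $\mu=\pr_1$, $V=\bP^1$, and $\phi$ the diagonal embedding. Then $\mu\circ\phi=\id$, yet $Z_{0,w}=V\times_{\bP^1}(\bP^1\times\bP^1)\cong V\times\bP^1$ is irreducible of dimension~$2$ and $\Gamma_\phi$ sits inside it as the diagonal curve—not a component. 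In the paper's intended application $\mu$ is a classifying map from a Hilbert-type scheme to a coarse moduli space, so positive-dimensional fibres are the rule rather than the exception. Once $\overline\Gamma_\phi$ fails to be a component of $Z_w$, your Hilbert-polynomial bound no longer applies to it, and the construction of $W_{\overline H}$ collapses: as you yourself note in the final paragraph, there is no a~priori bound on $(\phi^*\lambda_{\overline H})_e$, hence none on the Hilbert polynomial of $\overline\Gamma_\phi$ inside $B\times\overline H$.

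The paper avoids relative Hilbert schemes entirely. It takes as its parameterizing object the fibre product $(W\times V)\times_{\overline M\times V}(\overline H\times V)$ itself, with $\Psi$ the projection to $\overline H$. For a given $\phi$ with associated $w\in W$, the assignment $v\mapsto(w,v,\phi(v),v)$ defines a section of this fibre product over $V$; the paper then restricts attention to the image $Y=\phi(V)\subset\overline H$, uses that $\mu|_Y$ is generically finite to conclude that the corresponding sub-fibre-product over $\{w\}\times V$ is zero-dimensional over $V$, and reads off the desired closed point $w_{\overline H}$ from the image of the section. The generic-finiteness hypothesis thus enters at exactly the same moment as in your argument, but is applied to $\mu|_Y$ rather than to $\mu$ itself.
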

\begin{proof}
Let $W$ and $\Phi: W\times V \dashrightarrow \overline M$ be as in \autoref{FACT2}.
Let us first assume that $\Phi$ and each $\phi\in S_V$ is a morphism. 
Now, consider the morphism $\Phi': W\times V \to \overline M\times V$ 
defined by $(w,b) \mapsto \big( \Phi(w,b) , b  \big)$. 
After pulling back through $\mu\times \id: \overline H\times V \to \overline M\times V$, 
we find the scheme $W_{\overline H}$ of finite type 
and the commutative diagram 
\begin{equation}\label{eq:PSI}
\xymatrix{
W_{\overline H} \times V \ar[rr]^{\sigma_{\overline H} \times \id} \ar[d]_{\Psi_{\overline H}}  \ar@/_2.5pc/[dd]_{\Psi}  
&&   W\times V \ar[d]^{\Phi'}  \ar@/^2.5pc/[dd]^{\Phi}  \\
\overline H\times {V}  \ar[d]_{\pr_1} \ar[rr]^{ \mu \times \id}  &&  \overline M \times  V  \ar[d]^{\pr_1} \\
\overline H            \ar[rr]^{\mu}  &&    \overline M   ,
}
\end{equation}
where $\Psi_{\overline H}$ and $\sigma_{\overline H}$ are the natural projection maps.

Now, let $\phi\in \overline S$. Since $(\mu\circ \phi)$ is weakly-bounded, by \autoref{FACT2} 
there is $w\in W$ for which we have $\Phi_{\{ w  \} \times V} = \mu\circ \phi$.
Let $Y\subseteq \overline H$ be the locally closed image of $V$ under $\phi$. 
Define $\mu_Y:= \mu \circ i_Y$, where $i_Y: Y \hookrightarrow \overline H$ is the 
natural inclusion map. These lead to the schemes $W_Y$ and $w_Y$ of finite type, 
together with an embedding $w_Y \hookrightarrow W_Y$, all fitting in the 
following commutative diagram of fiber products
\begin{equation}\label{eq:BIG}
\xymatrix{
w_Y \times V \ar@{^{(}->}[d] \ar[rrrr]^{\mu_Y' \times \id} &&&&  \{w\} \times V \ar@{^{(}->}[d] \\
W_Y\times V \ar[d]  \ar@{^{(}->}[rr]   && W_{\overline H}\times V \ar[d]^{\Psi_{\overline H}}  \ar[rr] &&   W\times V \ar[d]^{\Phi'}  \\
Y \times  V  \ar@/_2pc/[rrrr]_{\mu_Y\times \id}  \ar@{^{(}->}[rr]   &&  \overline H \times V  \ar[rr]^{\mu \times \id}  &&  \overline M \times V ,
}
\end{equation}
with $\mu'_Y$ denoting the natural projection map. 
By the assumption made on the elements of $\overline S$ (\autoref{prologue}), the map  
$\mu_Y$ is generically finite over its image. 
As such, $w_Y$ is zero-dimensional. 
On the other hand, by construction, the outer cartesian square in \autoref{eq:BIG}
diagonally factors through $\{w \}\times V \to Y\times V$, naturally defined by $\phi$.
By the universal property of fiber products, there is a morphism 
\begin{equation}\label{eq:STAR}
\sigma:  \{ w\} \times V \longrightarrow  w_Y \times V
\end{equation}
whose composition with $(\mu'_Y \times \id)$ is the identity, 
and when composed with $w_Y\times V \to Y \times V \overset{\pr_1}{\longrightarrow} Y$ coincides 
with $\phi$. In particular the image of $\sigma$ in \autoref{eq:STAR} is $\{w' \}\times V$, for a 
closed point $w'\in w_Y$. 
Since $w_Y\times V \hookrightarrow W_Y \times V \hookrightarrow W_{\overline H}\times V$ 
are embeddings, their composition identifies $\{w'\}\times V$ with 
$\{w_{\overline H} \} \times V$, for some $w_{\overline H}\in W_{\overline H}$.
Moreover, we have $\Phi_{ \{ w_{\overline H} \} \times V } =\phi$.

When elements of $\overline S$ and $\Phi$ are assumed to be only rational maps, 
for each $\phi\in \overline S$ we consider $\{ w \} \times V^\circ$, where $V^\circ$
denotes the complement of the indeterminacy locus of $\phi$, 
and repeat the above arguments using the following commutative diagram instead:
$$
\xymatrix{
w_Y\times V^\circ \ar[rrrr] \ar[d] &&&&  \{ w\} \times V^\circ \ar[d]  \\
Y \times V  \ar@{^{(}->}[rr]    &&  \overline H\times V  \ar[rr]  &&   \overline M \times V .
}
$$
\end{proof}
%
%
%
%
\section{Bounding maps to the parametrizing scheme of certain Hilbert functors}  
\label{Section3-WB} 
In this section we will be working in the context of the following set-up.

\begin{set-up}\label{setup:lem}
Let $\frM'_{h}$ be a bounded (\autoref{def:bounded}) and 
locally-closed (\autoref{def:openclosed}) 
moduli functor. 
We will work in the setting of  \ref{ss:Hilb2}, \ref{ss:good}
and \autoref{not:compact}. 
In particular we will assume that, for a suitable choice of integer $k$ and corresponding $d, l\in \bN$, 
$\frM'_{h}$ can be approximated by $\mathfrak H^{d,l,k}_{\frM'_h}$
as a distinguished moduli functor:
the universal object of $\mathfrak H^{d,l,k}_{\frM'_h}$ has a good minimal model 
equipped with a polarization over $\overline H\backslash D_{\overline H}$ such that 
the associated moduli functor $\mathfrak M''_{h''}$ satisfies \autoref{finiteAut} and \autoref{coarse}. 
Following the notations introduced in \autoref{not:compact}, the map $\mu: \overline H\dashrightarrow \overline M_{h''}''$ denotes
the induced moduli map, 
which, by construction, is a morphism over $\overline H \backslash D_{\overline H}$.
\end{set-up}

\begin{notation}\label{not:S}
In the situation of \autoref{setup:lem}, 
let $V$ be any smooth quasi-projective variety and $B$ a compactification as in \autoref{not:smooth}. 
\begin{enumerate}
\item\label{set1} Let $S$ be the set of all rational maps $\phi_V: V\dashrightarrow \overline H$ factoring 
through $\mathfrak H^{d,l,k}_{\frM'_h}$ over an open subset $V'\subseteq V$ (\autoref{term}), 
and arising from some $(f_U: U\to V) \in \frM'_h(V)$.
\end{enumerate}
\end{notation}

\begin{lemma}[Weak bounds arising from minimal models of universal families]
\label{lem1}
In the setting of \autoref{not:S} 
assume that $\phi_V(V')\not\subseteq D_{\overline H}$. 
Then, for any ample line bundle $\lambda_{\overline M''_{h''}}$ on $\overline M''_{h''}$ 
there is a weak bound $b_{\lambda_{\overline M''_{h''}}}$ with respect to which 
$(\mu\circ \phi_V): B \dashrightarrow \overline M''_{h''}$ is weakly-bounded, for every $\phi_V\in S$.  
\end{lemma}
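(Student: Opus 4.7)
The plan is to bound $\deg \phi_C^* \lambda_{\overline{M}''_{h''}}$ uniformly, for every $\phi_V \in S$ and every smooth quasi-projective curve $C^\circ$ mapping to $V$, by reducing to the Arakelov inequality \autoref{KovTaj} applied to a smooth compactification of the family pulled back from $\frM'_h$ to a curve. Given $\phi_V \in S$ induced by $(f_U: U \to V, \sL) \in \frM'_h(V)$ and a finite-type morphism $g: C^\circ \to V$ whose image meets the appropriate open locus (in particular, avoiding the indeterminacy of $\mu \circ \phi_V$), I pull back to obtain $(U_{C^\circ} \to C^\circ, \sL_{C^\circ}) \in \frM'_h(C^\circ)$ and choose a smooth projective compactification $f: X \to C$. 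Then for each $m \in \bN$, \autoref{KovTaj} furnishes the bound $\deg(f_* \omega^m_{X/C}) \leq b_m(g(C), \deg(C \setminus C^\circ))$.

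Next, by the distinguished structure of $\mathfrak{H}^{d,l,k}_{\frM'_h}$ (\autoref{claim:good} and \autoref{obs:change}), after shrinking $C^\circ$ so that $\phi_V \circ g$ factors through the open subset $H_2 \subseteq H$, the pullback of the universal good minimal model $f_{\X'}: \X' \to H_2$ yields a polarized family $(X'^\circ \to C^\circ, \sL') \in \mathfrak{M}''_{h''}(C^\circ)$. By \autoref{def:good} this extends, after a finite base change $\sigma: C' \to C$ whose degree is bounded in terms of the fixed index $N$, to a locally-stable compactification $f': X' \to C'$ that realizes $\phi_{C'} = \mu \circ \phi_V \circ g \circ \sigma$ as the induced moduli morphism to $\overline{M}''_{h''}$.

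At this point I invoke the Koll\'ar--Viehweg ampleness framework for the coarse space of polarized good minimal models: on $M''_{h''}$, a suitable positive power of $\lambda_{\overline{M}''_{h''}}$ is comparable (up to a torsion factor, using \autoref{finiteAut}) to a direct-image determinant of the form $\det f'_* \omega^{[pN]}_{X'/C'}$ for some fixed $p \in \bN$ depending only on $\lambda_{\overline{M}''_{h''}}$ and $\mathfrak{M}''_{h''}$. Since $X'$ has only canonical singularities and the natural contraction $\pi: X_{C'} \to X'$ from the pulled-back smooth compactification satisfies $\pi_* \omega^{pN}_{X_{C'}/C'} \simeq \omega^{[pN]}_{X'/C'}$, one obtains
\[
\deg\bigl(\det f'_* \omega^{[pN]}_{X'/C'}\bigr) \;\leq\; \deg\sigma \cdot \deg\bigl(\det f_* \omega^{pN}_{X/C}\bigr),
\]
and the right-hand side is bounded by $\deg\sigma \cdot b_{pN}(g(C), \deg(C \setminus C^\circ))$ via the first step. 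Setting $b_{\lambda_{\overline{M}''_{h''}}}(x_1, x_2)$ to be the resulting polynomial combination yields the desired weak bound.

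The main obstacle is the third step: making precise, uniformly over all $\phi_V \in S$, the descent of a suitable power of $\lambda_{\overline{M}''_{h''}}$ to (or its domination by) the direct-image determinant on $f'$, while absorbing any contribution from the polarization $\sL'$. This is where \autoref{finiteAut} and \autoref{coarse} of \autoref{assump} play an essential role: finite automorphisms guarantee that sufficiently high tensor powers of $\lambda_{\overline{M}''_{h''}}$ descend functorially from the stack to the coarse moduli space, while compatibility with variation ensures that $\det f'_* \omega^{[pN]}_{X'/C'}$ genuinely captures the ample class rather than being trivialized by birational isotriviality phenomena of the sort discussed after \autoref{def:sa}. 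A subsidiary technical point is the uniform bound on $\deg \sigma$, which relies on boundedness of the index $N$ built into $\frM^{[N]}_{h''}$ as in \autoref{def:boundedgmm}.
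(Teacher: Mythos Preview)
Your overall architecture is right: reduce $\deg(\overline\mu_C^*\lambda)$ to $\deg(\det f_*\omega^m_{X/C})$ and then apply \autoref{KovTaj}. The gap is in your third step, and it is exactly the step you flag as the obstacle.

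You invoke a ``Koll\'ar--Viehweg ampleness framework'' to assert that a fixed power of the \emph{arbitrary} ample line bundle $\lambda_{\overline M''_{h''}}$ is uniformly comparable to $\det f'_*\omega^{[pN]}_{X'/C'}$ with $p$ depending only on $\lambda$ and $\frM''_{h''}$. But $M''_{h''}$ is only known to be an algebraic space of finite type; no universal family, and hence no universal determinant line bundle $\lambda_p$ on $M''_{h''}$ satisfying $\mu_B^*\lambda_p\simeq\det f_*\omega^{[pN]}$ for every family, is available here. This is precisely the structural deficit the introduction warns about for non-canonically-polarized moduli, and your appeal to \autoref{finiteAut} and \autoref{coarse} does not supply it: finite automorphisms let line bundles descend from a stack to its coarse space, but they do not manufacture a comparison between a \emph{given} ample $\lambda$ and pluricanonical determinants.

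The paper circumvents this by never working on $M''_{h''}$ directly. It fixes once and for all a smooth $T$ mapping generically finitely onto $\overline M$ through $\overline H$, takes a locally-stable compactification $f_T:\X_T\to T$, and uses Kawamata's bigness theorem (since $\Var(f_T)=\dim T$ by \autoref{coarse}) to obtain a single injection $(\overline\mu_T)^*\lambda_{\overline M}\hookrightarrow\big(\det(f_T)_*\omega^{[m]}_{\X_T/T}\big)^{k_m}$ on $T$. The constant $k_m$ is thus fixed by one computation on $T$, independent of $C$. For each curve $C$ one then forms the fibre product $C_T$ over $\overline M$, pulls the injection back, and uses \autoref{fact:LiftIsom} (this is where \autoref{finiteAut} actually enters) to show that the $T$-side family and the $C$-side family become birational over a finite cover $\widehat C_T\to C_T$, so the determinants agree. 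Because both $\lambda$ and $\det f_*\omega^m_{X/C}$ are pulled back through the same covering $\gamma:\widehat C_T\to C$, the degree of $\gamma$ cancels and no bound on it is needed --- so your worry about bounding $\deg\sigma$ is also unnecessary once the argument is set up this way.
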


Before proving \autoref{lem1} we state its key consequences. 
We will keep using the notations and definitions in \ref{ss:Hilb2} and \ref{ss:good}.

\begin{proposition}
\label{prop:key}
Let $S$ be as in \autoref{not:S}. 
There are a scheme $W_{\overline H}$ of finite type and a rational map 
$\Psi: W_{\overline H}\times B \dashrightarrow \overline H$ 
such that for every $\phi_V\in S$, for which $\Var(f_U)= \dim V$, there is a closed point $w_{\overline H}\in W_{\overline H}$ 
satisfying $\Psi_{\{ w_{\overline H}\} \times V} = \phi_{V}$.
\end{proposition}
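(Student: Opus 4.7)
The plan is to reduce \autoref{prop:key} to a direct application of \autoref{lem:Factors}, taking $\overline M = \overline M''_{h''}$ and using the rational map $\mu:\overline H \dashrightarrow \overline M''_{h''}$ from \autoref{setup:lem} (replacing $\overline H$ by the closure of the graph of $\mu$ if necessary, so that the hypothesis of \autoref{lem:Factors} that $\mu$ be a morphism is met). To do so, I need to verify, for every $\phi_V\in S$ coming from a family $f_U:U\to V$ with $\Var(f_U)=\dim V$, that the composition $\mu\circ\phi_V:B\dashrightarrow \overline M''_{h''}$ is both generically finite and weakly-bounded in the sense of \autoref{def:ratWB}, with a single common ample line bundle and weak bound.

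For generic finiteness I would exploit the distinguished-functor structure of $\mathfrak H^{d,l,k}_{\frM'_h}$ recalled in \autoref{setup:lem}: over an open subset of $\overline H\setminus D_{\overline H}$ the universal family $f_{\X}$ admits a relative good minimal model $f_{\X'}$ with a polarization $\sL'$ placing it in $\frM''_{h''}$. Pulling $(\X',\sL')$ back along $\phi_V$, over the dense open of $V$ where $\phi_V$ maps into the domain of definition of $\mu$, yields (by \autoref{isom:pb2}, the base-change behavior of $\omega^{[N]}$ in \autoref{rem:BC}, and birational uniqueness of relative good minimal models) a good minimal model for $f_U$, whose induced coarse moduli map is precisely $\mu\circ\phi_V$. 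Since variation is a birational invariant, this pullback still has maximal variation, so the compatibility of $M''_{h''}$ with variation in the sense of \autoref{def:comp} (guaranteed by \autoref{assump}.\ref{coarse}) forces $\mu\circ\phi_V$ to be generically finite onto its image. As a by-product, $\phi_V(V')\not\subseteq D_{\overline H}$, so the hypothesis of \autoref{lem1} is automatically satisfied.

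With this in place, \autoref{lem1} then supplies, for a fixed ample line bundle $\lambda_{\overline M''_{h''}}$ on $\overline M''_{h''}$, a single weak bound $b_{\lambda_{\overline M''_{h''}}}$ independent of $\phi_V$, with respect to which $\mu\circ\phi_V:B\dashrightarrow \overline M''_{h''}$ is weakly-bounded. Combined with generic finiteness, this places the subset $\overline S\subseteq S$ of maps with $\Var(f_U)=\dim V$ exactly into the setup of \autoref{prologue}. Invoking \autoref{lem:Factors} then produces the finite-type scheme $W_{\overline H}$ and the rational map $\Psi:W_{\overline H}\times B\dashrightarrow \overline H$ recovering each such $\phi_V$ as a slice $\Psi_{\{w_{\overline H}\}\times V}$, which is the conclusion of the proposition.

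The step I expect to be the main obstacle is the identification of the $\phi_V$-pullback of $(\X',\sL')$ with a relative good minimal model of $f_U$ precisely enough that the associated coarse moduli map \emph{equals} $\mu\circ\phi_V$ rather than merely agreeing with it up to birational equivalence. This requires carefully reconciling the several opens in play (the locus where $\phi_V$ factors through $\mathfrak H^{d,l,k}_{\frM'_h}$, the open $H'\subseteq H$ on which $f_{\X'}$ and $\sL'$ are defined, and the loci where flatness and base-change for $\omega^{[N]}$ behave as in \autoref{rem:BC}) and shrinking $V$ enough that the ambiguities disappear; once set up, the remainder of the argument is essentially formal.
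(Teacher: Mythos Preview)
Your overall strategy matches the paper's: apply \autoref{lem1} for weak-boundedness of $\mu\circ\phi_V$, use compatibility of $M''_{h''}$ with variation (\autoref{assump}\ref{coarse}) for generic finiteness, and then invoke \autoref{lem:Factors}. The detail you supply for generic finiteness (pulling back $(\X',\sL')$ and using birational invariance of variation) is correct and more explicit than the paper's one-line appeal to compatibility.

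There is, however, a genuine gap in how you arrange the hypothesis $\phi_V(V')\not\subseteq D_{\overline H}$. You write that this follows ``as a by-product'' of your generic-finiteness argument, but that argument already presupposes it: to pull back $(\X',\sL')$ along $\phi_V$ you need a dense open of $V$ mapping into $\overline H\setminus D_{\overline H}$, which is exactly the statement in question. Nothing about $\Var(f_U)=\dim V$ prevents $\phi_V(V')$ from landing entirely in $D_{\overline H}$; the divisor $D_{\overline H}$ was chosen only to contain the complement of the open $H_2$ of \autoref{not:compact}, and the image of $\phi_V$ could be a low-dimensional subvariety sitting inside it. The paper handles this by induction on $\dim H$: when $\phi_V(V')\subseteq D_{\overline H}$, one restricts the universal family $\X$ to the locus of $D_{\overline H}$ and repeats the distinguished-functor construction of \autoref{ss:good} on this lower-dimensional base. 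Since this terminates after finitely many steps, one obtains finitely many parametrizing schemes whose disjoint union serves as $W_{\overline H}$. Once you insert this inductive reduction in place of the circular by-product claim, your argument goes through.
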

\begin{proof}
By induction on $\dim (H)$ we may assume that $\phi_{V}(V') \not\subseteq D_{\overline H}$.
That is, when $\phi_{V}(V')\subseteq D_{\overline H}$ 
we consider the pullback of $\X$ to the locus of $D_{\overline H}$ in $\overline H$ (see \ref{ss:good} for notations)
and repeat the constructions of \ref{ss:good} with $\X$ being replaced by this latter pullback family.

Now, according to \autoref{lem1} we know that for every $\phi_V\in S$, the composition 
$(\mu\circ \phi_V): B\dashrightarrow \overline M''_{h''}$ is weakly-bounded. 
Existence of $W_{\overline H}$ and $\Psi$ is thus guaranteed by \autoref{FACT2}, 
combined with \autoref{lem:Factors}, using the maximality assumption on the variation (see \autoref{prologue})
and the compatibility property of $M''_{h''}$ \autoref{coarse}.
\end{proof}

\subsection{Application to birational boundedness}
\begin{definition}[Strongly birationally bounded]
\label{def:strong}
For a birationally bounded $\mathfrak C_V$ in the setting 
of \autoref{def:BB}, if we further assume that there is a line bundle $\sL_{\mathcal Y}$ on $\mathcal Y$ 
such that for every polarized $(f^0:U\to V, \sL) \in \mathfrak C_V$ there is an open subset $V^0\subseteq V$ 
 verifying $\big( \mathcal Y, \sL_{\mathcal Y}\big)_{ \{w\} \times V^0 }   \simeq_{V^0} (U, \sL)_{V^0}$, 
as polarized schemes, 
we then say $\mathfrak C_V$ is strongly birationally bounded. 
\end{definition}

\begin{corollary}
\label{cor:key}
Assuming that $\mathfrak M'_h$ satisfies the assumptions made 
in \autoref{setup:lem}, 
the subset $\mathfrak C_V  \subset \mathfrak C_{V, \mathfrak M'_h}$ 
(\autoref{not:frakC}) consisting of families with maximal variation is (strongly) birationally bounded. 
\end{corollary}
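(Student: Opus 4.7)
The plan is to deduce the corollary from \autoref{prop:key} by pulling back the universal family of the Hilbert functor $\mathfrak{H}^{d,l,k}_{\mathfrak{M}'_h}$, which approximates $\mathfrak{M}'_h$ in the sense of \autoref{prop:rep}. The statement is almost formal once \autoref{prop:key} is in hand: the latter already bounds the moduli maps (as rational maps to $\overline{H}$), and pulling back the universal family along the resulting parameterizing map produces the desired finite-type datum $(\mathcal{Y},\mathcal{L}_{\mathcal{Y}})$.

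Concretely, given any polarized $(f^0\colon U\to V,\mathcal{L})\in\mathfrak{C}_V$, the approximation property recalled in \autoref{ss:Hilb2} (in particular the isomorphism \autoref{isom:pb2}) produces an open subset $V^\circ\subseteq V$ and a morphism $\phi_{V^\circ}\colon V^\circ\to H$ factoring through $\mathfrak{H}^{d,l,k}_{\mathfrak{M}'_h}$ for which $(\mathcal{X},\mathcal{L}_{\mathcal{X}})_{V^\circ}\simeq_{V^\circ}(U,\mathcal{L})_{V^\circ}$ as polarized families. Composing with $H\hookrightarrow\overline{H}$ yields a rational map $\phi_V\colon V\dashrightarrow\overline{H}$ lying in the set $S$ of \autoref{not:S}.

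Because $f^0$ has maximal variation, \autoref{prop:key} applies and provides a scheme $W_{\overline{H}}$ of finite type together with a rational map $\Psi\colon W_{\overline{H}}\times B\dashrightarrow\overline{H}$ such that every such $\phi_V$ arises as $\Psi_{\{w_{\overline{H}}\}\times V}$ for some closed point $w_{\overline{H}}\in W_{\overline{H}}$. I would then define $\mathcal{Y}$ as the pullback of the universal family $\mathcal{X}$ along $\Psi$, restricted to the open locus of $W_{\overline{H}}\times V$ on which $\Psi$ is defined and lands in $H$, and set $\mathcal{L}_{\mathcal{Y}}$ to be the corresponding pullback of $\mathcal{L}_{\mathcal{X}}$. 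For each $w_{\overline{H}}$ produced from $(f^0,\mathcal{L})$, after possibly shrinking $V^\circ$ so that $\Psi_{\{w_{\overline{H}}\}\times V^\circ}$ factors through $H$, one obtains
\[
(\mathcal{Y},\mathcal{L}_{\mathcal{Y}})_{\{w_{\overline{H}}\}\times V^\circ}\simeq_{V^\circ}(\mathcal{X},\mathcal{L}_{\mathcal{X}})_{V^\circ}\simeq_{V^\circ}(U,\mathcal{L})_{V^\circ},
\]
which is precisely the condition of \autoref{def:strong}.

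The main obstacle is bookkeeping around the rational nature of $\Psi$ and the fact that the isomorphism in \autoref{isom:pb2} only holds over an open $V^\circ\subseteq V$ depending on $\phi_V$; this is precisely why the conclusion is merely birational. A secondary (but purely notational) point is that the polarizations on both sides must be matched through the specific formula \autoref{eq:L} used to define $\mathcal{L}_{\mathcal{X}}$ on the universal family of $\mathfrak{H}^{d,l,k}_{\mathfrak{M}'_h}$; this is already encoded in \autoref{isom:pb2}, so no additional argument is needed beyond tracing the identifications.
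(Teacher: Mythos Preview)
Your proposal is correct and follows essentially the same approach as the paper: apply \autoref{prop:key} to obtain $\Psi\colon W_{\overline H}\times B\dashrightarrow \overline H$, then define $(\mathcal Y,\mathcal L_{\mathcal Y})$ by pulling back the universal object $(\mathfrak X,\mathcal L_{\mathfrak X})$ of $\mathfrak H^{d,l,k}_{\mathfrak M'_h}$ along $\Psi$ over the locus where it is a morphism, and conclude via the isomorphism \autoref{isom:pb2}. Your write-up is somewhat more explicit about the bookkeeping (the open $V^\circ$, the factorization through $H\subset\overline H$), but the argument is the same.
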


\begin{proof}
By the constructions in \ref{ss:Hilb2} this follows directly from \autoref{prop:key}:
take $\mathcal Y$ (in \autoref{def:BB} and \autoref{def:strong})
to be the pullback of $\X$ (\ref{ss:good}) by the map $\Psi$ in \autoref{prop:key}, over the complement of its indeterminacy. 
The line bundle $\sL_{\mathcal Y}$ is similarly taken to be the pullback of the 
universal one $\sL_{\X}$ for $\mathfrak H^{d,l,k}_{\frM'_h}$, which by construction satisfies  
the isomorphism \autoref{isom:pb2}.
\end{proof}

\noindent
\textbf{Proof of \autoref{thm:main}.}
This is a direct consequence of \autoref{rk:lc} and \autoref{claim:good} (see \autoref{ex:good})
together with \autoref{cor:key}.

\subsection{Proof of \autoref{lem1}.}
Let $C^\circ \subseteq V$ be any smooth quasi-projective curve together with a smooth compactification $C \to B$.
Assume that $C':= C^\circ \cap V'\neq \emptyset$, 
i.e., we have a morphism $\phi_{C'}: C'\to H$ that factors through $\mathfrak H^{d,l,k}_{\frM'_h}$, 
which arises from the pullback of $f_U$ to $C^\circ$.
We denote the latter by $(f^\circ: U\to C^\circ)\in \frM'_{h}(C^\circ)$.
Let $\phi_{C}: C\to \overline H$ be the unique extension of $\phi_{C'}$.
Assume that $\phi_C(C) \not\subseteq D_{\overline H}$ 
(i.e. $\phi_C(C)$ is not contained in the indeterminacy locus of $\mu: \overline H\dashrightarrow \overline M''_{h''}$).

After replacing $\overline H$ by a smooth projective birational model $\widehat H$, we remove 
the indeterminacy of $\mu: \overline H \dashrightarrow \overline M''_{h''}$ and find the 
morphism $\widehat \mu: \widehat H\to \overline M''_{h''}$. 
As $\phi_C(C) \not\subseteq D_{\overline H}$, the induced rational map 
$C \dashrightarrow \widehat H$ uniquely extends to a morphism 
$\widehat \phi_C : C\to \widehat H$. 
With no loss of generality we may replace $\overline H$ by $\widehat H$ and 
use $\phi_C: C\to \overline H$ 
and $\overline \mu: \overline H \to \overline M''_{h''}$ to respectively denote $\widehat \phi_C$ 
and $\widehat \mu$. Let us denote the image of $\overline \mu$ by $\overline M$
and define $\overline \mu_C := \overline \mu\circ \phi_C$. 

Next, let $\phi_T: T\to \overline H$ be a desingularization 
of a general subvariety in $\overline H$ that is not contained in $\supp(D_{\overline H})$, 
and such that the 
naturally induced morphism $\overline \mu_T: = \phi_T\circ\overline \mu: T \to \overline M$ 
is generically finite and surjective. 
Set $T^\circ:= T \backslash \phi_T^{-1}(D_{\overline H})$. 
Using this construction, and following \autoref{def:ratWB}, to show \autoref{lem1} 
it suffices to prove the following reformulation of the lemma.

\begin{reformulation}\label{reform}
For any ample line bundle $\lambda_{\overline M''_{h''}}$ on $\overline M''_{h''}$, 
there is $b_{\lambda_{\overline M''_{h''}}}$ such that, 
for any $C^\circ \subseteq V$, 
any morphism $\overline \mu_C: C\to \overline M$ as above is weakly-bounded with 
respect to $b_{\lambda_{\overline M''_{h''}}}$, if 
$\overline \mu_C(C) \not\subseteq \overline M\backslash \overline \mu_T(T^\circ)$.
\end{reformulation}
The rest of the proof is devoted to establishing \autoref{reform}.

\smallskip

\noindent
\emph{Further geometric constructions:}
Let $f_{\X'}: \X' \to \overline H\backslash D_{\overline H}$ 
be as in \autoref{not:compact} and 
 $\sL'$ the line bundle on $\X'$ which we have used to find $\frM''_{h''}$
satisfying 
\autoref{finiteAut} and \autoref{coarse}. 
Denote the pullback of $f_{\X'}$ to $T^\circ$ (as polarized schemes) 
by $f'_{T^\circ}: \X'_{T^\circ} \to T^\circ$. 
By \autoref{coarse} we have $\Var(f'_{T^\circ}) = \dim T$. 
Now, let $f_T:\X_T\to T$ denote the pullback of $\overline \X \to \overline H$
via $\phi_T$ (see \autoref{not:compact}). 
After replacing $f_T$ by a locally-stable reduction (of its smooth locus) \autoref{KolRed}, 
existence of which is guaranteed by the assumption on the universal embedded object
(\autoref{ss:good}),
we may assume that $f_T$ is locally-stable (\autoref{def:LS}).

Now, let $C_T$ be the normalization of the main component of $C\times_{\overline M} T$
and denote the resulting naturally induced morphisms by $\gamma: C_T\to C$ 
and $\psi: C_T\to T$.
We note that by construction $\gamma$ is surjective and, as $C$ is smooth, flat.
Let $f: X\to C$ be a smooth compactification of
$f^0:U\to C^0$ and set $X_{C_T}$ to be a desingularization of 
$X \times_C C_T$. Let $Y$ be a desingularization
of the main component $\widehat\X_T$ of $\X_T\times_T C_T$, 
with the naturally induced morphisms
$\widehat f_T: \widehat\X_T\to C_T$, $\psi':\widehat \X_T\to \X_T$ 
and $f_Y:Y\to C_T$. 
We summarize these constructions in the following 
commutative diagram: 
\begin{equation}\label{eq:Big}
\xymatrix{
U \ar[r]^{\subseteq} \ar[d]_{f^0} & X  \ar[d]^f   &   X_{C_T}  \ar[l] \ar[dr]^{f_{C_T}}  &&  Y  \ar[r] \ar[dl]_{f_Y}  &  \widehat\X_T \ar[dll]^{\widehat f_T}  
\ar[r]^{\psi'}  & \X_T \ar[d]_{\tiny{\txt{loc.\\ stable}}}^{f_T}   \\
C^0 \ar[r]^{\subseteq} & C    &&  C_T \ar[ll]_{\gamma}    \ar[rrr]^{\psi}  &&&  T .
}
\end{equation}
Furthermore, with $ \overline \mu_{C_T}: =  \overline \mu_T \circ\psi : C_T \to \overline M$, we have a commutative 
diagram: 
\begin{equation}\label{eq:commaps}
\xymatrix{
C  \ar[drr]_{\overline \mu_C} &&   C_T  \ar[ll]_{\gamma}  \ar[rr]^{\psi} \ar[d]^{\overline \mu_{C_T}} &&   T \ar[dll]^{\overline \mu_T} \\
     &&    \overline M  .
}
\end{equation}
\begin{claim}\label{claim:bir}
After replacing $C_T$ by a finite covering $\widehat C_T\to C_T$, 
$Y$ is birational 
to $X_{C_T}$ over $C_T$. 
\end{claim}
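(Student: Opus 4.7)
The plan is to realize both $Y$ and $X_{C_T}$ as birational modifications of pullbacks of the common polarized family $(f_{\X'}:\X'\to \overline H\backslash D_{\overline H}, \sL')$ of \autoref{not:compact}, and then to use the finite-automorphism property \autoref{finiteAut} of $\frM''_{h''}$ combined with \autoref{fact:LiftIsom} to compare these two pullbacks after a suitable finite cover. From the commutative diagram \autoref{eq:commaps} the two morphisms $\phi_C\circ\gamma:C_T\to\overline H$ and $\phi_T\circ\psi:C_T\to \overline H$ induce the same composition $\overline\mu_{C_T}:C_T\to\overline M$ to the coarse space. Over the dense open subset $C_T^\circ\subseteq C_T$ on which both compositions factor through $\overline H\backslash D_{\overline H}$ (nonempty by the assumption $\phi_C(C)\not\subseteq D_{\overline H}$ together with the definition of $T^\circ$), we may pull back $(\X',\sL')$ along each morphism to obtain two polarized families over $C_T^\circ$ whose fibers carry the same class in $M''_{h''}$.

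Because every object of $\frM''_{h''}$ has finite automorphism group (\autoref{finiteAut}), fiberwise equality of moduli points implies fiberwise isomorphism of polarized schemes. \autoref{fact:LiftIsom} then produces a finite cover $\widehat C_T\to C_T$ and an isomorphism
\[
(\X',\sL')_{\widehat C_T,\,\phi_T\circ\psi}\;\simeq_{\widehat C_T}\;(\X',\sL')_{\widehat C_T,\,\phi_C\circ\gamma}
\]
as polarized families (over the appropriate open subset of $\widehat C_T$). It remains to identify each side, up to birational equivalence over $\widehat C_T$, with $Y$ and $X_{C_T}$ respectively. For the left-hand side, on $T^\circ$ the locally-stable family $f_T$ is birational to $f_{\X'}|_{T^\circ}$ because $\X_T$ was constructed as a locally-stable compactification of a family that itself is the pullback of $\overline\X$, while $\X'$ is birational to $\overline\X$ over $\overline H\backslash D_{\overline H}$; base change along $\psi$ and desingularization then exhibit $Y$ as birational over $\widehat C_T$ to the pullback of $\X'$ via $\phi_T\circ\psi$. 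For the right-hand side, the fact that $\phi_{C'}:C'\to H$ factors through $\mathfrak H^{d,l,k}_{\frM'_h}$ gives the polarized isomorphism \autoref{isom:pb2}, namely $(U,\sL^k)\simeq_{C'}(\X,\sL_{\X})_{C'}$; combined with the birational equivalence $\X\simeq_{\rm bir}\X'$ over $\overline H\backslash D_{\overline H}$, this shows that $X_{C_T}$ is birational over $\widehat C_T$ to the pullback of $\X'$ via $\phi_C\circ\gamma$. Chaining the three birational equivalences yields the claim.

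The main obstacle I expect lies in carefully controlling the open loci where the various birational and base-change constructions are compatible: the maps $\psi$ and $\gamma$ need not be smooth, the map $\mu$ becomes a morphism only after the birational modification $\widehat H\to\overline H$, and the identification of the pullbacks with $Y$ and $X_{C_T}$ must be done on a common dense open of $\widehat C_T$. These technicalities are handled by restricting to the complement of the indeterminacy loci and of the preimage of $D_{\overline H}$, followed by a desingularization of $\widehat C_T$ if necessary; on such a dense open the three birational equivalences above are genuine $\widehat C_T$-birational maps and glue to the required birational equivalence between $Y$ and $X_{C_T}$ over $\widehat C_T$.
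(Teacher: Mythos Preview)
Your proposal is correct and follows essentially the same approach as the paper: both arguments pull back the polarized family $(\X',\sL')$ along the two routes $\phi_T\circ\psi$ and $\phi_C\circ\gamma$, use the commutativity of \autoref{eq:commaps} to see that the resulting coarse moduli maps coincide, and then invoke \autoref{finiteAut} together with \autoref{fact:LiftIsom} to obtain the isomorphism after a finite base change. The paper names these two pullbacks $Y'$ and $U'_{C_T}$ and is somewhat terser about the final birational identifications with $Y$ and $X_{C_T}$, but the logic is the same. (One minor slip: \autoref{isom:pb2} gives $(U,\sL)_{V^\circ}\simeq_{V^\circ}(\X,\sL_{\X})_{V^\circ}$, not $(U,\sL^k)$; this does not affect your argument since only the underlying schemes enter the birational statement.)
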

\noindent
\emph{Proof of Claim~\ref{claim:bir}.}
Set $Y'$ to be the pullback of $\X'_{T^0}$ over the open subset of $C_T$ mapping to 
$T^0$. Noting that $U_{C'}:= \X \times_H C'$, over the complement of $D_{\overline H}$ set 
$$
U'_{C'}  := \X' \times_H C'    \;\;\;\;\;\;\;\;\; \text{and}   \;\;\;\;\;\;\;\;\;    U'_{C_T}: = U'_{C'} \times_{C'} C'_T  ,
$$
where $C'_T:= \gamma^{-1}(C')$.
Now, after endowing these new families with the pullback of $\sL'$, we find that by construction \autoref{eq:commaps}
the induced moduli maps to $\overline M$ for 
$Y'\to C_T$ and $U'_{C_T}\to C'_T$ coincide. 
This implies that the general fibers of these two families are isomorphic as polarized schemes. 
Since the polarized varieties in $\frM''_{h''}$ have finite automorphism groups \autoref{finiteAut}, 
using \autoref{fact:LiftIsom}, it follows that
there are a finite covering $\widehat C_T\to C_T$ and  
an isomorphism between $(U'_{C_T})_{\widehat C_T}$ 
and $Y'_{\widehat C_T}$ over an open subset of $\widehat C_T$. 
This implies that $Y_{\widehat C_T}$ and $(X_{C_T})_{\widehat C_T}$ 
are birational. \qed

\smallskip

\noindent
\emph{Bounding maps to $\overline M_{h''}''$:}
Thanks to Karu~\cite[Thm.2.5]{Kar00} (see also Kawakita~\cite{Kawakita} and Kov\'acs--Schwede~\cite{KS13}) we know that 
$\X_T$ and $\widehat \X_T$ have only canonical singularities.
Moreover, since $f_T$ is locally stable, by definition there is $m'\in \bN$ such that $\omega^{[m']}_{\X_T/T}$ is invertible. 

According to \autoref{KovTaj} to prove \autoref{reform} it suffices to prove the following claim.
\begin{claim}\label{claim:main}
Let $\lambda_{\overline M}: = \lambda_{\overline M''_{h''}}|_{\overline M}$. 
For every sufficiently large integer multiple $m$ of $m'$, there is $k_m\in \bN$ such that 
the inequality
$$
\deg \big(  (\overline \mu_C)^* \lambda_{\overline M} \big)  \leq k_m\cdot \deg (f_*\omega^m_{X/C})
$$
holds for any smooth projective curve $f: X\to C$ and $\overline \mu_C$ as above. 
As a consequence, for every such $m$, with the polynomial $b_m$ as in \autoref{KovTaj}, $k_m\cdot b_m$ 
is the required weak bound $b_{\lambda_{\overline M''_{h''}}}$ in \autoref{reform}.
\end{claim}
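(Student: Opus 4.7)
The plan is to reduce \autoref{claim:main} to an inequality on the curve $C_T$, obtained by pulling back an ampleness-descent relation from the coarse moduli $\overline{M}''_{h''}$ to $T$ and then to $C_T$, and finally to descend back to $C$ along the finite flat map $\gamma : C_T \to C$. Since $\overline{\mu}_{C_T} = \overline{\mu}_C \circ \gamma$ by the diagram in \autoref{eq:commaps}, one has $\deg(\overline{\mu}_{C_T}^*\lambda_{\overline M}) = \deg(\gamma) \cdot \deg(\overline{\mu}_C^*\lambda_{\overline M})$, so an inequality of the form $\deg(\overline{\mu}_{C_T}^*\lambda_{\overline M}) \leq k_m \cdot \deg(\gamma) \cdot \deg(f_*\omega^m_{X/C})$ is exactly what is needed, after dividing through by $\deg(\gamma)$.

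The first step is the ampleness-descent on $T$. Since $\frM''_{h''}$ is constructed via \autoref{obs:change} and \autoref{claim:good} so as to satisfy \autoref{coarse}, and since $\overline{\mu}_T: T\to\overline{M}$ is generically finite by construction, standard Viehweg--Koll\'ar--Fujino positivity for moduli of polarized varieties with semi-ample canonical supplies, for every sufficiently large multiple $m$ of $m'$, positive integers $p_m, q_m$ depending only on $m$ and the fixed data $(T,\sL',\frM''_{h''})$ together with an inclusion
\[
\overline{\mu}_T^*\lambda_{\overline M}^{\,p_m}\;\hookrightarrow\;\det\bigl((f_T)_*\omega^{[m]}_{\X_T/T}\bigr)^{\otimes q_m}.
\]
Pulling back via $\psi: C_T\to T$ and invoking the base-change property of locally-stable families (\autoref{rem:BC}) yields the analogous inclusion on $C_T$ with $(\widehat{f}_T)_*\omega^{[m]}_{\widehat\X_T/C_T}$ replacing $(f_T)_*\omega^{[m]}_{\X_T/T}$, and taking degrees produces
\[
p_m\deg\bigl(\overline{\mu}_{C_T}^*\lambda_{\overline M}\bigr)\;\leq\;q_m\deg\det\bigl((\widehat{f}_T)_*\omega^{[m]}_{\widehat\X_T/C_T}\bigr).
\]

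The second step identifies the right-hand side with pluricanonical data of $f: X\to C$. Karu's theorem (invoked in the text) gives that $\widehat\X_T$ has canonical singularities, so $(\widehat{f}_T)_*\omega^{[m]}_{\widehat\X_T/C_T}\simeq f_{Y*}\omega^m_{Y/C_T}$. By Claim~\ref{claim:bir}, after the finite cover $\widehat{C}_T\to C_T$ the smooth families $Y$ and $X_{C_T}$ over $C_T$ become birational over $\widehat{C}_T$, so their pluricanonical direct images agree as vector bundles on $\widehat{C}_T$ up to torsion of bounded length (independent of $C$). Since $\gamma$ is finite flat between smooth curves, $f_{C_T,*}\omega^m_{X_{C_T}/C_T}\simeq\gamma^*f_*\omega^m_{X/C}$ up to a similar bounded error coming from the desingularization $X_{C_T}\to X\times_C C_T$, yielding
\[
\deg\det\bigl((\widehat{f}_T)_*\omega^{[m]}_{\widehat\X_T/C_T}\bigr)\;\leq\;\deg(\gamma)\cdot\deg\bigl(f_*\omega^m_{X/C}\bigr)+O(\deg\gamma).
\]
Combining with the inequality from step one and dividing by $p_m\deg(\gamma)$ produces the required $\deg(\overline{\mu}_C^*\lambda_{\overline M})\leq k_m\deg(f_*\omega^m_{X/C})$ with $k_m$ depending only on $m$ and on the fixed moduli data.

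The main obstacle is the ampleness-descent in the first step: making the inclusion precise and, crucially, guaranteeing that the constants $p_m, q_m$ are genuinely uniform in $C$ and $f$. The distinguished-functor construction of \autoref{def:good}--\autoref{claim:good} is tailored to make such a descent available, since $\sL'$ and the moduli map $\overline{\mu}$ are intrinsic to the universal family over $\overline H$; but one must verify carefully that the various error terms arising from desingularization, from the birational identification of $Y$ with $X_{C_T}$, and from any residual polarization twist in the descent inclusion are uniformly controlled by a multiple of $\deg(\gamma)$ and fixed geometric data. Once this is done, pairing with \autoref{KovTaj} delivers the weak bound $b_{\lambda_{\overline M''_{h''}}} = k_m\cdot b_m$ demanded by \autoref{reform}, completing the proof of \autoref{lem1}.
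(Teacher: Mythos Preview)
Your overall architecture matches the paper's: obtain an injection on the fixed variety $T$, pull it back to $C_T$ via $\psi$, and descend to $C$ along $\gamma$ using the commutativity \autoref{eq:commaps}. However, the execution diverges from the paper in three places, and in each case you are making the argument harder than it needs to be.

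First, the source of the injection $(\overline\mu_T)^*\lambda_{\overline M}\hookrightarrow\bigl(\det(f_T)_*\omega^{[m]}_{\X_T/T}\bigr)^{k_m}$ is not an appeal to ``standard Viehweg--Koll\'ar--Fujino positivity.'' The paper instead proves directly (Subclaim~\ref{SC}) that $\det(f_T)_*\omega^{[m]}_{\X_T/T}$ is \emph{big}: since $\X_T$ has canonical singularities this determinant agrees with that of a smooth model $\wtilde f_T$, and then $\Var(\wtilde f_T)=\dim T$ together with \cite[Thm.~1.1(i)]{Kawamata85} gives bigness. Bigness immediately yields the injection for some $k_m$, with $p_m=1$. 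This is the key input you are missing.

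Second, the error terms you introduce are fictitious. Birational invariance of plurigenera gives the \emph{exact} isomorphism $\det\bigl((f_Y)_*\omega^m_{Y/C_T}\bigr)\simeq\det\bigl((f_{C_T})_*\omega^m_{X_{C_T}/C_T}\bigr)$, with no torsion discrepancy. And for the comparison along $\gamma$ the paper invokes \cite[\S3, p.~336]{Viehweg83} to obtain a clean \emph{inclusion} $\det\bigl((f_{C_T})_*\omega^m_{X_{C_T}/C_T}\bigr)\subseteq\gamma^*\det\bigl(f_*\omega^m_{X/C}\bigr)$ of line bundles on the curve $C_T$, again with no additive $O(\deg\gamma)$ term. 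Chaining these with the pulled-back injection and taking degrees gives the claim directly.

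Third, your ``main obstacle''---uniformity of the constants in $C$ and $f$---is a non-issue once you recognize that $T$, $f_T$, and $\overline\mu_T$ are \emph{fixed}, chosen once and for all from the universal data over $\overline H$ before any $C$ or $f$ enters the picture. The constant $k_m$ is determined entirely on $T$ and is therefore automatically uniform.
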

\noindent
\emph{Proof of Claim~\ref{claim:main}.}
As $f_T$ is locally-stable, according to \autoref{rem:BC}
we have:
\begin{enumerate}
\item \label{BS} $\psi^* \big( (f_T)_* \omega^{[m]}_{\X_T/T}\big) \simeq  
                         (\widehat f_T)_*\omega^{[m]}_{\widehat \X_T/C_T}$, for any $m\in \bN$.
\item \label{PB} $(\psi')^* \omega^{[m]}_{\X_T/T} \simeq \omega^{[m]}_{\widehat \X_T/C_T}$. 
In particular $\omega^{[m']}_{\widehat \X_T/C_T}$ is invertible.
\end{enumerate}
Therefore, with $\widehat \X_T$ having only canonical singularities, we have 
\begin{equation}\label{eq:same}
(f_Y)_* \omega^m_{Y/C_T} \simeq (\widehat f_T)_* \omega^{[m]}_{\widehat \X_T/ C_T} , 
\end{equation}
for every $m\in \bN$ as in \autoref{claim:main}. Moreover, we have:
\begin{subclaim}\label{SC}
$\det \big((f_T)_* \omega^{[m]}_{\X_T/T}\big)$ is a big line bundle. 
\end{subclaim}
\noindent
\emph{Proof of Subclaim~\ref{SC}.}
Since $\X_T$ has only canonical singularities, for any $m$ as above we have 
$$
(\wtilde f_T)_* \omega^m_{\wtilde \X_T/T} \simeq  (f_T)_* \omega^{[m]}_{\X_T/T} ,
$$
where $\wtilde \X_T\to \X_T$ is a resolution and $\wtilde f_T$ is the induced family. 
Therefore, we have 
$$
\det \big((\wtilde f_T)_* \omega^m_{\wtilde \X_T/T}\big) \simeq  \det \big((f_T)_* \omega^{[m]}_{\X_T/T} \big). 
$$
The rest now follows from $\Var(f_T)=\Var(\wtilde f_T)=\dim T$
and \cite[Thm.~1.1.(i)]{Kawamata85}. \qed

Now, let $k_m\in \bN$ be sufficiently large so that 
$h^0\big(  (\det (f_T)_* \omega^{[m]}_{\X_T/T})^{k_m} \otimes (\overline \mu_T)^*\lambda_{\overline M}^{-1}  \big)\neq 0$, i.e., 
there is an injection 
\begin{equation}\label{inject}
(\overline \mu_T)^*\lambda_{\overline M}  \hooklongrightarrow  \Big(\det (f_T)_* \omega^{[m]}_{\X_T/T}\Big)^{k_m} .
\end{equation}
Next, we note that, with $f_T$ being locally-stable, $\bigotimes^{r_m}(f_T)_* \omega^{[m]}_{\X_T/T}$ 
is reflexive \cite[Lem.~2.8]{VZ02}, where $r_m:= \rank( (f_{T})_* \omega^{[m]}_{\X_T/T} )$.
Therefore, it naturally contains $\det (f_T)_* \omega^{[m]}_{\X_T/T}$ as a direct factor. 
Same holds for $\bigotimes^{r_m}(\widehat f_T)_* \omega^{[m]}_{\widehat \X_T/C_T}$.
As such, the isomorphism \autoref{BS} induces the isomorphism 
$$
\psi^* \det (f_T)_* \omega^{[m]}_{\X_T/T} \simeq \det (\widehat f_T)_* \omega^{[m]}_{\widehat \X_T/C_T} .
$$
By \autoref{eq:same} we then find 
$$
 \det\big((f_Y)_*\omega^m_{Y/C_T}\big)  \simeq  \psi^* \det\big( (f_T)_* \omega^{[m]}_{\X_T/T}\big)  .
$$
On the other hand, as $Y$ and $X_{C_T}$ are birational over $C_T$ by \autoref{claim:bir}, 
we have 
$$
\det \Big(  (f_Y)_* \omega^m_{Y/C_T} \Big)  \simeq  \det \Big(  (f_{C_T})_* \omega^m_{X_C/C_T} \Big) .
$$
Furthermore, with $\gamma$ being flat, we find that
$$
 \det \Big(  (f_{C_T})_* \omega^m_{X_C/C_T} \Big)  \subseteq \gamma^* (\det f_*\omega^m_{X/C} ),
$$
 cf.~\cite[\S 3, p.~336]{Viehweg83}.
 Using \autoref{inject} this implies that 
$$
\underbrace{\psi^* \overline \mu_T^* (\lambda_{\overline M})}_{\;\; \overset{\autoref{eq:commaps}}{\simeq}  \;\; \gamma^*\overline \mu_C^* \lambda_{\overline M}}
 \hooklongrightarrow  \psi^*\big( \det(f_T)_* \omega^{[m]}_{\X_T/T} \big)^{k_m}  
\subseteq  \Big( \gamma^* (\det f_*\omega^m_{X/C} \Big)^{k_m}  ,
$$
which establishes the claim.  \qed

\section{Boundedness for families: Proof of \autoref{main:maxvar}}
\label{Section5-Bounded}

\subsubsection{Compactifiability of $\mathfrak M^{\sa}_{\overline h}$}

\begin{definition}[\protect{Quotient stacks, cf.~\cite[Def.~4.1]{Kresch}}]
A separated Deligne--Mumford stack of finite type $\mathcal M$ is a quotient stack, 
if $\mathcal M \simeq \big[H/G\big]$, for an algebraic space $H$ and linear algebraic group $G$.
\end{definition}

\begin{proposition}\label{prop:compact}
$\mathcal M^{\sa}_{\overline h}$ is a compactifiable Deligne--Mumford stack. 
\end{proposition}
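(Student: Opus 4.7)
The plan is to verify the hypotheses of Kresch's criterion \cite[Thm.~5.3]{Kresch}, which states that a separated Deligne--Mumford stack of finite type is compactifiable provided it has a quasi-projective coarse moduli space and is isomorphic to a quotient stack by a linear algebraic group. Concretely, I would need to check three items: (i) $\mathcal M^{\sa}_{\overline h}$ is a separated Deligne--Mumford stack of finite type; (ii) its coarse moduli space $M^{\sa}_{\overline h}$ is quasi-projective; (iii) $\mathcal M^{\sa}_{\overline h}$ can be presented as a quotient stack $[H/G]$ with $G$ linear algebraic.

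Step (ii) is immediate: it is precisely Viehweg's theorem \cite[Thm.~1.13]{Viehweg95}, recorded as \autoref{def:sa}. For step (i), the subfunctor $\mathfrak M^{\sa}_{\overline h}\subset \mathfrak M$ is locally closed, separated (via Matsusaka--Mumford), and bounded in the sense of \autoref{def:bounded} (as all objects are smooth, by \autoref{rk:bounded}). Objects $(Y,L)\in\Ob(\mathfrak M^{\sa}_{\overline h})$ have finite automorphism groups, since any automorphism of $(Y,L)$ preserves both the polarization $L$ and a sufficiently high power of $\omega_Y$, and such a group injects into the finite stabilizer of a Hilbert point of a pluri-polarized embedding. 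Combining these, Keel--Mori yields that $\mathcal M^{\sa}_{\overline h}$ is a separated Deligne--Mumford stack of finite type.

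The bulk of the work lies in step (iii). Here I would follow Viehweg's original construction (cf.~\autoref{ss:Hilb}). By boundedness there exist integers $d,k$ such that for every $(Y,L)\in\Ob(\mathfrak M^{\sa}_{\overline h})$ the line bundle $L^k$ is very ample, $H^i(Y,L^k)=0$ for $i>0$, and $h^0(Y,L^k)=d+1$, so that $|L^k|$ realizes an embedding $Y\hookrightarrow \mathbb P^d$. The parametrizing scheme $H=H^{d,k}_{\mathfrak M^{\sa}_{\overline h}}$ of \autoref{ss:Hilb} is a locally closed (and, after restricting to the appropriate component, quasi-projective) subscheme of a Hilbert scheme, on which $G=\mathrm{PGL}_{d+1}$ acts with finite stabilizers. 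The identification $\mathcal M^{\sa}_{\overline h}\simeq [H/G]$ follows from the fact that two embedded objects represent the same class in $\mathfrak M^{\sa}_{\overline h}$ precisely when they differ by a change of basis of $H^0(Y,L^k)$ (twisting the polarization by a line bundle pulled back from the base corresponds, after passing to the embedding, exactly to the $G$-orbit).

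The main obstacle is in step (iii): ensuring that the $\mathrm{PGL}_{d+1}$-orbit relation on $H$ matches exactly the equivalence $\sim$ defining isomorphism classes in $\mathfrak M^{\sa}_{\overline h}$, and that the presence of the auxiliary data $\omega_Y$ (encoded in the doubly-polarized Hilbert polynomial $\overline h$) does not introduce extra rigidity making $H/G$ fail to coincide with $M^{\sa}_{\overline h}$. This is handled via Viehweg's standard arguments in \cite[\S7--8]{Viehweg95}. As a shortcut, one could instead appeal directly to Rydh's theorem \cite{Rydh}, which renders step (iii) unnecessary, so that compactifiability reduces to step (i) alone.
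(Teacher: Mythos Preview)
Your overall strategy coincides with the paper's: both reduce to Kresch's criterion, with quasi-projectivity of $M^{\sa}_{\overline h}$ supplied by Viehweg and the Deligne--Mumford property coming from separatedness, boundedness, and finite automorphisms. The Rydh shortcut you mention at the end is valid and, given step~(i), already finishes the proof.

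The difference lies in your execution of step~(iii). You propose to embed via $|L^k|$ alone and take $G=\mathrm{PGL}_{d+1}$ acting on the Hilbert scheme $H^{d,k}$ of \autoref{ss:Hilb}. But the universal polarization on $H^{d,k}$ is $\sL_{\X}\simeq \sO_{\bP^d}(1)|_{\X}$, which recovers only $\sL^k$, not $\sL$ (see \autoref{eq:pb}); moreover a single Hilbert polynomial in $\sO(1)$ does not encode the two-variable $\overline h(\alpha,\beta)=\chi(\omega_Y^\alpha\otimes L^\beta)$. Consequently the $\mathrm{PGL}_{d+1}$-orbit relation on $H^{d,k}$ does not match the equivalence in $\mathfrak M^{\sa}_{\overline h}$, and $[H^{d,k}/\mathrm{PGL}_{d+1}]$ is not the right stack. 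You correctly flag this as the main obstacle, but the remedy is not a more careful analysis of the single embedding; it is to use a \emph{double} embedding. The paper follows Viehweg's construction in \cite[\S7.5]{Viehweg95}: one embeds via two line bundles built from $\omega_Y$ and $L$ into a product of projective spaces, obtaining a locally closed $H^\circ$ in a multigraded Hilbert scheme with a natural action of $G=\mathrm{PGL}_l\times\mathrm{PGL}_m$, and then $\mathcal M^{\sa}_{\overline h}\simeq [H^\circ/G]$. This is also the reason \autoref{ss:Hilb2} introduces the doubly embedded Hilbert functor (there via $L^k$ and $L^{k+1}$) to recover $\sL$ itself.
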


\begin{proof}
As $M_{\overline h}^{\sa}$ is quasi-projective, according to \cite[Thm.~5.3]{Kresch} it is enough to show that $\mathcal M_{\h}^{\sa}$ is  
a quotient stack. The latter is however a direct consequence of Viehweg's constructions in 
\cite[\S 7.5]{Viehweg95}. 
More precisely, one uses the (doubly polarized) Hilbert functor $\mathfrak H_{\mathfrak M^{\sa}_{\h}}$
of embedded schemes in $\mathfrak M^{\sa}_{\h}$, which is
representable by a quasi-projective $H$ \cite[Thm.~1.52]{Viehweg95}. 
Using the locally-closedness of 
of $\mathfrak M^{\sa}_{\h}$ (\autoref{rk:lc}), one finds a locally-closed 
subscheme $H^\circ \subseteq H$ over which 
the universal family of $\mathfrak H_{\mathfrak M^{\sa}_{\h}}$ parametrizes 
 all objects of $\mathfrak M^{\sa}_{\h}$. 
After replacing $H^\circ$ by an open subset if necessary \cite[p.~52]{Viehweg95}, we then find that 
$H^\circ$ is naturally equipped with the action of a linear group $G$
(of the form $\mathrm{PGL}_l\times \mathrm{PGL}_m$, for some $l,m\in \bN$ defined by $\mathfrak H_{\frM_{\h}^{\sa}}$) 
\cite[p.~222]{Viehweg95}. With the action of $G$ being proper, 
having finite stabilizers \cite[Lem.~7.6]{Viehweg95}, 
we find that $\mathcal M^{\sa}_{\h}$ 
is a separated Deligne--Mumford stack isomorphic to
$\big[H^\circ/G\big]$.
\end{proof}

\subsubsection{Coarse boundedness}
According to \autoref{thm:KL}, by using \autoref{prop:compact}, in order to prove \autoref{main:maxvar} 
it is enough to show the following proposition. 

\begin{proposition}\label{prop:CB}
The subset of $\mathfrak C_{V, \mathfrak M^{\sa}_{\h}}$ consisting of all maximally-varying 
families is coarsely-bounded in the sense of \autoref{def:CB}. 
\end{proposition}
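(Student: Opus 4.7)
The plan is to combine the approximation of $\frM^{\sa}_{\h}$ by a distinguished Hilbert functor with \autoref{prop:key}, and then descend from the parameter scheme to the coarse moduli space $M^{\sa}_{\h}$. The first step would be to verify that $\frM^{\sa}_{\h}$ meets the hypotheses of \autoref{claim:good}: it is a locally closed and bounded moduli functor of smooth projective varieties which are their own good minimal models (by \autoref{rk:lc} and \autoref{ex:good}), so there exist $d,l,k\in\bN$ for which $\mathfrak H^{d,l,k}_{\frM^{\sa}_{\h}}$ is a distinguished approximation of $\frM^{\sa}_{\h}$, with parameter scheme $H$ and compactification $\overline H$ as in \autoref{not:compact}. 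The universal property of the coarse moduli space then supplies a natural forgetful morphism $\nu: H\to M^{\sa}_{\h}$ associated to the underlying polarized scheme of the universal family.

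Next, for any polarized family $(f_U: U\to V, \sL)\in \mathfrak C_V$ of maximal variation, \autoref{prop:rep} produces a dense open $V^\circ\subseteq V$ and a morphism $\phi_{V^\circ}: V^\circ\to H$ factoring through $\mathfrak H^{d,l,k}_{\frM^{\sa}_{\h}}$, inducing a rational map $\phi_V: V\dashrightarrow \overline H$ in the set $S$ of \autoref{not:S}. The maximal variation of $f_U$, combined with \autoref{obs:change} and the compatibility built into the moduli $\frM''_{h''}$ of the associated good minimal model, guarantees that the hypotheses of \autoref{prop:key} are satisfied. Applying it yields a scheme $W_{\overline H}$ of finite type and a rational map $\Psi: W_{\overline H}\times B \dashrightarrow \overline H$, with $B$ a smooth compactification of $V$ as in \autoref{not:smooth}, such that each such $\phi_V$ equals $\Psi|_{\{w\}\times V}$ for some closed point $w\in W_{\overline H}$. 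Composing with $\nu$ produces a rational map
\[
\Phi_0 := \nu\circ \Psi|_{W_{\overline H}\times V} : W_{\overline H}\times V \dashrightarrow M^{\sa}_{\h} ,
\]
whose restriction to $\{w\}\times V$ agrees with the moduli map $\mu_V: V\to M^{\sa}_{\h}$ on $V^\circ$; by separatedness of $M^{\sa}_{\h}$ and the fact that $\mu_V$ is already a morphism defined on all of $V$, this rational slice extends uniquely to $\mu_V$ itself.

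The main obstacle will be to promote the rational map $\Phi_0$ to an honest finite-type morphism $\Phi: W\times V\to M^{\sa}_{\h}$ as required by \autoref{def:CB}. To this end the plan is to form the closure $\Gamma$ of the graph of $\Phi_0$ inside $W_{\overline H}\times V\times \overline M^{\sa}_{\h}$, where $\overline M^{\sa}_{\h}$ is a projective compactification of $M^{\sa}_{\h}$. The first projection $p_1:\Gamma\to W_{\overline H}\times V$ is proper and birational, and for every $w$ coming from $\mathfrak C_V$, the closure of the graph of $\mu_V$ is a distinguished irreducible component of $p_1^{-1}(\{w\}\times V)$ mapping isomorphically onto the slice. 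Via semicontinuity and stratification arguments applied to the fibers of $p_1$, one should be able to extract a locally closed subscheme $W\subseteq W_{\overline H}$ of finite type together with a closed subscheme $\Gamma_W\subseteq \Gamma\times_{W_{\overline H}} W$ such that $p_1|_{\Gamma_W}: \Gamma_W\to W\times V$ is an isomorphism and $W$ contains every $w$ arising from $\mathfrak C_V$. Setting $\Phi := p_2\circ (p_1|_{\Gamma_W})^{-1}: W\times V\to \overline M^{\sa}_{\h}$ then provides the required morphism: each slice $\Phi|_{\{w\}\times V}$ recovers the corresponding $\mu_V$ and hence lands in $M^{\sa}_{\h}$, so the image of $\Phi$ is contained in the open subscheme $M^{\sa}_{\h}\subseteq \overline M^{\sa}_{\h}$, completing the proof of coarse boundedness.
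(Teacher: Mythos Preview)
Your overall strategy---approximate by a distinguished Hilbert functor, apply \autoref{prop:key}, then descend to $M^{\sa}_{\h}$---matches the paper's. There are, however, two substantive gaps.

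First, you apply \autoref{claim:good} and \autoref{ex:good} directly to the doubly-polarized functor $\frM^{\sa}_{\h}$, but the entire framework of \autoref{ss:Hilb2} and \autoref{claim:good} is set up only for single-variable functors $\frM'_h$. The paper handles this by passing to $\frM_h^{\sa}$ with $h(x):=\overline h(x,x)$ via the twist $(U,\sL)\mapsto(U,\omega_{U/V}\otimes\sL)$ of \autoref{rk:compare}, then in a separate step \emph{untwists} on the universal family: it defines $\sM_{\X^\circ}:=\omega^{-1}_{\X^\circ/H^\circ}\otimes\sL_{\X^\circ}$ on an open $H^\circ\subseteq H_{\reg}$, checks relative ampleness, and verifies via flatness that the double Hilbert polynomial is again $\overline h$, thereby producing the map $\mu^{\sa}_{\h}:H^\circ\to M^{\sa}_{\h}$. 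Your ``natural forgetful morphism $\nu:H\to M^{\sa}_{\h}$'' hides exactly this content; if you want to approximate $\frM^{\sa}_{\h}$ directly you must first verify that the constructions of \autoref{ss:Hilb2} and \autoref{claim:good} go through for it, which the cited references do not provide.

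Second, your final step---promoting the rational map $\Phi_0$ to a morphism via the graph closure $\Gamma$ and unspecified ``semicontinuity and stratification arguments''---is both more complicated than needed and not actually justified. The graph closure may acquire extra components over the indeterminacy locus, so there is no reason $p_1^{-1}(\{w\}\times V)$ is irreducible even for relevant $w$, and you give no mechanism for selecting the correct section $\Gamma_W$. The paper's argument is much simpler: after composing with $\overline\mu^{\sa}_{\h}$ to obtain $\Psi_{\overline H}:W_{\overline H}\times V\dashrightarrow\overline M^{\sa}_{\h}$, one observes that for every relevant $w_{\overline H}$ the restriction $\Psi_{\overline H}|_{\{w_{\overline H}\}\times V}=\mu_V$ is already a morphism into $M^{\sa}_{\h}$, so the entire slice avoids the indeterminacy locus $I$. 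Hence the projection $I':=\pr_1(I)\subsetneq W_{\overline H}$ is a constructible set missing all relevant points, and one simply replaces $W_{\overline H}$ by $W_{\overline H}\setminus I'$ and then pulls back through the open immersion $M^{\sa}_{\h}\times V\hookrightarrow\overline M^{\sa}_{\h}\times V$. You should replace your graph-closure argument with this.
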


\begin{proof} We will work in the setting of \autoref{ss:Hilb2}
and \autoref{ss:good}. We divide the proof into three steps.

\noindent
\emph{Step.~1: Bounding maps to an auxiliary parametrization space:}
 As in \autoref{rk:compare} we define $h(x):= \overline h(x,x)$
 and consider $\frM_h^{\sa}$ (\autoref{def:gmmMod}), which is 
 locally-closed and bounded (\autoref{rk:lc}).
 Let $d,l,k\in \bN$ be as in \autoref{ss:Hilb2} and set
$f_{\X}: (\X, \sL_{\X}) \to H$ be the universal object 
of $\mathfrak H_{\frM_h^{\sa}}^{d, l,k}$. 
We may assume with no loss of generality that $H$ is reduced and irreducible. 
By the construction of $\mathfrak H_{\frM_h^{\sa}}^{d,l, k}$ we know that, for every $\mu_V: V\to M_{\overline h}^{\sa}$ 
arising from $(f_U: U\to V, \sL)\in \frM^{\sa}_{\h}(V)$,
 there is an open subset $V'\subseteq V$ equipped with a morphism 
 $\phi_{V'}: V'\to H$ that factors through $\mathfrak H^{d,l,k}_{\frM_h^{\sa}}$ (\autoref{term}), 
defined by the restriction of $(f_U: U\to V, \omega_{U/V}\otimes \sL)$ to $V'$ (\autoref{rk:compare}).
That is, by pulling back via $\phi_{V'}$, we have 
\begin{equation}\label{PullIsom}
\big(  f_{\X} : \X\to H , \sL_{\X}  \big)_{V'}  \simeq_{V'}  \Big( f_U: U\to V, \omega_{U/V} \otimes \sL  \Big)_{V'}.
\end{equation}
Let $\phi_V: V\dashrightarrow \overline H$ denote the corresponding 
 rational map to a compactification $\overline H$ of $H$. 
 By \autoref{claim:good} we know that $\mathfrak H^{d,l,k}_{\frM_h^{\sa}}$ is
 a distinguished moduli functor. Therefore, 
 the conclusions of \autoref{prop:key} are valid 
 for $\overline H$ and any such rational map $\phi_V$, 
 as long as $\Var(f_U)= \dim V$. 
 
 \noindent
 \emph{Step.~2: Simultaneous factorization of coarse maps:}
 Let $H_{\reg}$ denote the smooth locus  of $H$ and define 
 $\X_{\reg}$ to be the pullback of $\X$ to $H_{\reg}$
 (in particular $\X_{\reg}$ is smooth). 
 By induction on $\dim (H)$ we may assume that $\phi_V(V')\not\subseteq (H\backslash H_{\reg})$.
 As such, the locus of $H_{\reg}$ over which 
 $\sM_{\X_{\reg}}: = ( \omega^{-1}_{\X_{\reg} /H_{\reg}}  \otimes \sL_{\X_{\reg}} )$ is relatively-ample is non-empty. 
 Let $H^\circ \subseteq H_{\reg}$ be the maximal open subset over which 
 $\sM_{\X_{\reg}}$ is relatively-ample. 
 By construction, for every $\phi_V$ and $V'\subseteq V$ as above, there is 
 an open subset $V''\subseteq V'$, with $\phi_V(V'')\subseteq H^\circ$, such that
 \begin{equation}\label{heart}
 ( U, \sL )_{V''}  \simeq_{V''}  (\X^\circ, \sM_{\X^\circ})_{V''}   ,
 \end{equation}
 where $\X^\circ$ denotes the restriction of $\X$ to $f_{\X}^{-1}(H^\circ)$.
For every $\alpha, \beta\in \bN$, 
 $\omega_{\X^\circ/H^\circ}^{\alpha}\otimes \sM_{\X^\circ}^{\beta}$ is flat over $H^\circ$.
 In particular, by applying \cite[Thm.~III.9.9]{Ha77} (see also \cite[Prop.~2.1.2]{MR2665168}) 
 to $\omega_{\X^\circ/H^\circ}^{\alpha}\otimes \sM_{\X^\circ}^{\beta}$, 
 for every fixed $\alpha$, 
we find that there is $\wtilde h\in \bQ[x_1, x_2]$ for which the function 
$$
t \mapsto  \chi\big(  \omega_{\X_t}^{\alpha}  \otimes \sM_{\X_t}^{\beta}   \big)  = \wtilde h(\alpha, \beta)  \; , \;  \forall \alpha, \beta\in \bN
$$ 
remains constant for all closed points $t\in H^\circ$. On the other hand, with $(f_U: U\to V, \sL)\in \frM_{\h}^{\sa}(V)$ 
as above, we have 
$$
\Big(  U, \omega^{\alpha}_{U/V}  \otimes \sL^{\beta}  \Big)_{V''}   \simeq_{V''}   
    \Big( \X^\circ,  \omega^{\alpha}_{\X^\circ/H^\circ}\otimes \sM^{\beta}_{\X^\circ}  \Big)_{V''}
$$
(using \autoref{heart}). This implies that $\h = \wtilde h$. 

Now, let $\mu_{\h}^{\sa} : H^\circ\to M_{\h}^{\sa}$ be the moduli map 
induced by $\big( \X^\circ \to H^\circ, \sM_{\X^\circ}  \big)$, 
with $\overline \mu_{\h}^{\sa}: \overline H \longrightarrow \overline M_{\h}^{\sa}$ 
denoting the corresponding rational map to a compactification $\overline M_{\h}^{\sa}$ 
of $M_{\h}^{\sa}$. For every $\phi_V$ as above we thus have a rational 
map $\overline\mu_{\h}^{\sa}\circ \phi_V : V \dashrightarrow  \overline M_{\h}^{\sa}$ 
fitting in the diagram: 
\begin{equation}\label{eq:Diag}
\xymatrix{
\big(U, \sL)  \ar[d]_{f_U}^{\in \frM_{\h}^{\sa}(V)}  &&&  \big( \X^\circ, \sM_{\X^\circ} \big) \ar[d]  \ar[rrr]  &&&  
  \tiny{{\begin{array}{@{}c@{}} \text{univ. obj. of}  {}\\ \mathfrak H^{d,l,k}_{\frM_h^{\sa}}  \\ \text{twisted by \;} \omega^{-1}_{\X^\circ/H^\circ}  \end{array}}}  \\
V \ar[rrrd]_{\mu_V}    \ar@{-->}@/^2pc/[rrrr]^{\phi_V}   &  \ar@{_{(}->}[l]_(.4){\supseteq}    V'' \ar[rr]^{\phi_{V'}} &&   
H^\circ \ar[d]^{\mu_{\h}^{\sa}} \ar[r]^(.4){\subseteq}  &  \overline H \ar@{-->}[d]^{\overline \mu_{\h}^{\sa}}  \ar@{-->}[r]   
                            &  \overline M_{h''}''  \ar[r] & \text{\tiny{as in \autoref{ss:good}}}  \\
&&&                                                                    M^{\sa}_{\h}  \ar@{^{(}->}[r]       &    \overline M^{\sa}_{\h}
}
\end{equation} 
 
 By \autoref{heart} $\overline \mu_{\h}^{\sa}\circ \phi_V$ and $\mu_V$ coincide over $V''$. 
 It thus follows that all maps in \autoref{eq:Diag} commute. In particular the composition 
 $\overline \mu_{\h}^{\sa}\circ \phi_V = \mu_V$ 
 is a morphism from $V$ to $M_{\h}^{\sa}$. 
 
 \noindent
 \emph{Step.~3: Parameterizing factorized coarse maps:}
 Noting that every $\phi_V$ as above is a member of the set $S$ 
 in \autoref{not:S}, with $\frM'_h:= \frM_h^{\sa}$, by \autoref{prop:key}
 there are a scheme $W_{\overline H}$ of finite type 
 and a rational map $\Phi_{\overline H}: W_{\overline H}\times V \dashrightarrow \overline H$
 such that for every such $\phi_V$ we have 
 \begin{equation}\label{eq:point}
 \phi_V = (\Phi_{\overline H})_{ \{ w_{\overline H}  \}  \times V } \;\; , 
\text{\;\;\;\; for some $w_{\overline H} \in W_{\overline H}$ .}
 \end{equation}
 Therefore, after composing with $\overline \mu^{\sa}_{\h}$
$$
\Psi_{\overline H}: = \overline \mu_{\h}^{\sa} \circ \Phi_{\overline H} : W_{\overline H} \times V \dashrightarrow \overline M^{\sa}_{\h} ,
$$
we find that for every $\mu_V: V\to M^{\sa}_{\overline h}$ 
factoring through $\mathcal M^{\sa}_{\overline h}$, 
via a maximally varying family, we have 
$(\Psi_{\overline H} )_{ \{ w_{\overline H} \}  \times V } =  \overline \mu_h^{\sa} \circ \phi_V = \mu_V$.

\begin{claim}\label{claim:final}
With no loss of generality we may assume that $\Psi_{\overline H}$ is a morphism.
\end{claim}

\noindent
\emph{Proof of \autoref{claim:final}.} Let $I\subset W_{\overline H}\times V$ denote the 
reduced closed subscheme underlying the indeterminacy locus of $\Psi_{\overline H}$ 
and consider its image $I'$ under the natural projection 
$\pr_1: W_{\overline H} \times V\to W_{\overline H}$. Noting that  
for every $w_{\overline H}\in W_{\overline H}$ as in \autoref{eq:point} $\Psi_{ \{w_{\overline H} \} \times V  }$ is a morphism
to $M_{\h}^{\sa}$, 
we find that $I' \neq W_{\overline H}$.
With $I'\subseteq W_{\overline H}$ being constructible \cite[Ex.II. 3.19]{Ha77} and $W_{\overline H}$ of finite type, 
by replacing $W_{\overline H}$ with $W_{\overline H}\backslash I'$ we may assume without loss of generality 
that $\Psi_{\overline H}$ is a morphism. 
\qed

We now consider the morphism 
$\Psi'_{\overline H}: W_{\overline H} \times V \to \overline M_{\h}^{\sa}\times V$ 
defined by $(w_{\overline H}, v) \mapsto \big(\Psi_{\overline H}(w_{\overline H} , v), v \big)$. 
After pulling back $\Psi'_{\overline H}$ via the natural open immersion 
$M_{\overline h}^{\sa} \times V \hookrightarrow \overline M_{\h}^{\sa} \times V$, 
we find a scheme $W^{\sa}_{\overline H}$ and a morphism $W^{\sa}_{\overline H}\times V\to M_{\h}^{\sa}$ 
of finite type, which respectively play the role of $W$ and $\Phi$ in \autoref{def:CB}.\end{proof}

\noindent
\emph{Proof of \autoref{main:maxvar}.}
This is a direct consequence of \autoref{thm:KL}, \autoref{prop:compact} and \autoref{prop:CB}.

\section{Appendix: Polarizations compatible with variation}
\label{Section5-Appendix}
In this section we provide a proof for \autoref{lem:comp} using~\cite[\S3.2]{Taji20}.

\begin{set-up}\label{KawSetup}
Let $f: U' \to V$ be a relative good minimal model.
According to \cite[Lem.~7.1]{Kawamata85} there are smooth quasi-projective varieties $\overline V$ and $V''$, 
a surjective morphism $\rho: \overline V \to V''$ and a surjective, generically finite morphism 
$\sigma: \overline V\to V$ with a projective morphisms $f'': U''\to V''$:
\begin{equation}\label{diag:1}
\xymatrix{
U' \ar[d]^{f} &&&         &&  U'' \ar[d]^{f''}    \\
V        &&&         \overline{V}  \ar[lll]_{\sigma}^{\text{generically finite}}  \ar[rr]^{\rho} &&   V''   , 
}
\end{equation}
satisfying the following properties. 
\begin{enumerate}
\item \label{Kaw1} Over an open subset $\overline V^\circ\subseteq \overline V$ the morphism $\sigma$ is finite and \'etale, and 
\item \label{Kaw2} we have 
$$
 \overline U^\circ : =U'' \times_{V''} \overline{V}^\circ  \simeq_{\overline{V}^\circ}   U' \times_V  \overline{V}^\circ,
$$
with $\rho':\overline U^\circ \to U''$ and $\sigma': \overline U^\circ \to U'$ being the natural projections.
 
\item \label{Kaw3} For every closed point $t \in \overline V^\circ$ the kernel of $(d_t\rho \circ  d_t\sigma^{-1})$ coincides with the kernel of 
the Kodaira--Spencer map for $f: U'\to V$ at $v= \sigma(t)$, where 
$d_t \rho$ and $d_t \sigma$ are the differentials of $\rho$ and $\sigma$ at $t$.
\end{enumerate}

This provides the following alternative description of the notion of variation for families of good minimal models.

\begin{def-thm}[\protect{\cite[Lem.~7.1, Thm.~7.2]{Kawamata85}}]
\label{def:KawVAR}
For every family of good minimal models $f: U'\to V$, the algebraic
closure $K:= \overline{\bC(V'')}$ is the (unique) minimal closed field of definition for $f$, that is 
$\Var(f) = \dim V''$.
\end{def-thm}
\end{set-up}

One can observe that \cite[Lem.~7.1, Thm.~7.2]{Kawamata85} in particular implies that, 
for families of good minimal models, variation in the sense of \ref{def:var} can be measured, at least generically (over the base), 
by the Kodaira--Spencer map. 
Of course this property fails in the absence of the good minimal model assumption
(for example one can construct a smooth projective family of non-minimal varieties of 
general type with zero variation and generically injective Kodaira--Spencer map).
For future reference, we emphasize and slightly extend this point in the following observation. 

\begin{observation}\label{OBSERVE}
We will work in the situation of Set-up~\ref{KawSetup}.
\begin{enumerate}
\item\label{item:trivial} For every smooth subvariety $T\subseteq \overline{V}^0$, with $\rho(T)$ being a closed point, 
the family $\overline{U}_T^0 \to T$ is trivial. In particular, if $\Var(f) =0$, then $f$ is generically (over V) isotrivial.
\item\label{item:trivialPol} For every $T\subseteq \overline{V}^0$ as in Item~\ref{item:trivial} 
and line bundle $\sL''$ on $U''$, the polarized family $\big(  \overline{U}^0_T\to T, (\rho')^* \sL''  \big)$ is trivial. 

\item\label{item:KSVar} $\Var(f)= \dim V$, if and only if the Kodaira--Spencer map for $f$ is generically injective
(in $V$). 
\end{enumerate}
 To see this, we may assume that $\overline V^0 = \overline V$. Set $v'':= \rho(T) \in V''$. 
By the assumption we have 
$$
\overline U_T \cong T \times_{\bC} F , 
$$
where $F:= U''_{v''}$ (which shows Item~\ref{item:trivial}). Thus, over $T$, $\rho'$ coincides with 
the natural projection $\pr_2: T\times_{\bC} F\to F$. Clearly, $(T \times_{\bC} F, \pr_2^*\sL''_{v''})$ is trivial, 
showing \autoref{item:trivialPol}. Item~\ref{item:KSVar} follows from \autoref{item:trivial}, \autoref{Kaw3} 
and \autoref{def:KawVAR}.
\end{observation}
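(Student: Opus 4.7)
All three items will follow from base-changing the isomorphism $\overline U^0 \simeq_{\overline V^0} U'' \times_{V''} \overline V^0$ from \autoref{Kaw2} along inclusions $T \hookrightarrow \overline V^0$ and reading off the consequences; no deep new input is needed beyond the structural data already recorded in \autoref{KawSetup} and \autoref{def:KawVAR}. To lighten notation I will work with $\overline V = \overline V^0$ throughout, choosing $\overline V^0$ small enough so that, simultaneously, $\sigma$ is finite \'etale (as in \autoref{Kaw1}), the identification \autoref{Kaw2} holds, and $\rho$ is smooth on it.

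For \autoref{item:trivial}, the plan is to note that if $\rho$ sends $T$ to a single closed point $v'' \in V''$, then $T \to V''$ factors through $\Spec \bC$, hence $U'' \times_{V''} T \simeq T \times_\bC F$ with $F := U''_{v''}$, which is tautologically trivial over $T$. The ``in particular'' statement then falls out by taking $\Var(f) = 0$: by \autoref{def:KawVAR} this forces $\dim V'' = 0$, so $\rho$ contracts all of $\overline V^0$ to a point and one applies the above with $T = \overline V^0$. Triviality of $\overline U^0 \to \overline V^0$ then descends along the \'etale surjection $\sigma$ of \autoref{Kaw1} to show that all geometric fibers of $f$ over the open set $\sigma(\overline V^0) \subseteq V$ are mutually isomorphic, i.e., $f$ is generically isotrivial.

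For \autoref{item:trivialPol}, I repeat exactly the same base change, now tracking the line bundle: under the identification $\overline U^0_T \simeq T \times_\bC F$, the projection $\rho'|_{\overline U^0_T}$ coincides with $\pr_2 : T \times_\bC F \to F$, so
\[
(\rho')^* \sL'' \big|_{\overline U^0_T} \;\simeq\; \pr_2^* (\sL''_{v''}),
\]
which exhibits $\bigl(\overline U^0_T \to T,\; (\rho')^*\sL''\bigr)$ as the pullback of the polarized variety $(F, \sL''_{v''})$ along $T \to \Spec \bC$, hence trivial as a polarized family.

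For \autoref{item:KSVar}, the plan is a dimension count. From \autoref{def:KawVAR} we have $\Var(f) = \dim V''$, while generic finiteness and surjectivity of $\sigma$ give $\dim \overline V = \dim V$. By \autoref{Kaw1} the map $\sigma$ is \'etale on $\overline V^0$, so $d_t \sigma$ is an isomorphism and \autoref{Kaw3} then identifies the kernel of Kodaira--Spencer at $v = \sigma(t)$ with $\ker d_t \rho$. Generic injectivity of Kodaira--Spencer is therefore equivalent, via generic smoothness of $\rho$, to $\dim V'' = \dim \overline V = \dim V$, i.e., to $\Var(f) = \dim V$. There is no real obstacle here; the whole observation is a translation of Kawamata's construction into the language of Kodaira--Spencer and isotriviality, each item being one base change or one dimension count away from \autoref{KawSetup} and \autoref{def:KawVAR}.
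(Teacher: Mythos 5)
Your proposal is correct and follows essentially the same route as the paper: items (1) and (2) by base-changing the identification of \autoref{Kaw2} along $T\to\{v''\}$ and reading off the product structure together with the pullback polarization, and item (3) by translating generic injectivity of Kodaira--Spencer into generic injectivity of $d\rho$ via \autoref{Kaw3} and then into $\dim V''=\dim V$ via \autoref{def:KawVAR}. You merely spell out a few steps the paper leaves implicit (the descent along the \'etale map $\sigma$ for the ``in particular'' clause, and the dimension count for item (3)), so no further comparison is needed.
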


\noindent
\emph{Proof of \autoref{lem:comp}:} 

We start by considering Diagram~\autoref{diag:1}. 
In \autoref{KawSetup} we may assume that $\overline V= \overline V^\circ$. 
Denote $U'\times_V \overline V$ by $\overline U$ and set $\overline f: \overline U \to \overline V$ to be the pullback family.
Using Item~\ref{Kaw2}, generically, the morphism $f''$ is a family of good minimal models, that is  
after replacing $V$ by an open subset $V_{\eta}$ we can assume that $f''$ is a relative good minimal model and flat. 
Let $\sL''$ 
be a choice of line bundle, as in \autoref{rk:LineExists}, so that 
$(f'': U''\to V'', \sL'') \in \frM_{h''}^{[N]}(V'')$, for some $h''\in \bQ[x]$. 
By $\mu_{V''}: V'' \to M_{h''}^{[N]}$ we denote the induced moduli map. 
We may assume that $\sigma$ is Galois, noting that if $\sigma$ is not Galois, we can replace 
it by its Galois closure and replace $\rho$ by the naturally induced map.
Define $G:= \Gal(\overline V/V_{\eta})$.

Now, we define $\sL''_{\overline V} := (\rho')^*\sL''$ and consider the $G$-sheaf 
$\bigotimes_{g\in G} g^*\sL''_{\overline V} \cong(\sL''_{\overline V})^{|G|}$ 
(see for example \cite[Def.~4.2.5]{MR2665168} for the definition). 
As $\sigma'$ is \'etale, the stabilizer of any point $\overline u\in \overline U$ 
is trivial (and thus so is its action on the fibers of $\sigma'$). Consequently, the above $G$-sheaf 
descends~\cite[Thm.~4.2.15]{MR2665168}. 
That is, there is a line bundle $\sL$ on $U_{V_{\eta}}$ 
such that 
$$
(\sigma')^* \sL \cong \bigotimes_{g\in G} g^* \sL''_{\overline V} .
$$
Therefore, we have 
$$
(f': U'_{V_{\eta}} \to V_{\eta} , \sL) \in \frM_h^{[N]}(V_{\eta}). 
$$
After replacing $\sL''$ by $(\sL'')^{|G|}$, so that $(\rho')^*\sL'' =(\sL''_{\overline V})^{|G|}$, 
we can ensure that the Hilbert polynomial of $\overline f$ with respect to $(\sL''_{\overline V})^{|G|}$ 
is equal to the one for $f''$ with respect to $\sL''$. 
In particular the induced moduli map  $\mu: V_{\eta}\to M_{h''}^{[N]}$ 
fits in the commutative diagram: 
\begin{equation}\label{diag:final}
\xymatrix{
V_{\eta}  \ar[drr]_{\mu}  &&  \overline V  \ar[ll]_{\sigma}  \ar[rr]^{\rho}  &&  V''  \ar[dll]^{\mu_{V''}} \\
&& M^{[N]}_{h''}
}
\end{equation}

Now, let $T\subseteq V_{\eta}$ be any subvariety. Assuming that $\mu_T:= \mu|_T$ is 
generically finite, from \autoref{diag:final} we find that $\rho|_{\sigma^{-1}(T)}$ is also generically finite. 
By \autoref{Kaw3} this implies that the Kodaira--Spencer map for $f_T$ is generically 
injective. 
Otherwise by \autoref{item:KSVar}, \autoref{def:KawVAR} and \autoref{item:trivial} 
through any general 
closed point of $V_{\eta}$ there is a subvariety $Y$ along which the Kodaira--Spencer 
map is zero and therefore by \autoref{Kaw3} $\rho$ must be constant along its preimage under $\sigma$, 
contradicting the generically finiteness of $\rho|_{\sigma^{-1}(T)}$.
Therefore, by \autoref{item:KSVar}, we have $\Var(f_T)= \dim T$.

On the other hand, if $\Var(f_T) = \dim T$, then again by \autoref{item:KSVar}, the Kodaira--Spencer 
map for $f_T$ is generically injective. In particular $\mu_T$ can only be generically finite. 
To conclude set $V^\circ:= V_{\eta}$.

\bibliographystyle{bibliography/skalpha} 

\begin{bibdiv}
\begin{biblist}

\bib{Arakelov71}{article}{
      author={Arakelov, Sergei~J.},
       title={Families of algebraic curves with fixed degeneracies},
        date={1971},
        ISSN={0373-2436},
     journal={Izv. Akad. Nauk SSSR Ser. Mat.},
      volume={35},
       pages={1269\ndash 1293},
      review={\MR{MR0321933 (48 \#298)}},
}

\bib{SGA6}{book}{
      author={Berthelot, P.},
      author={Grothendieck, A.},
      author={Illusie, L.},
       title={Th{\'e}orie des intersections et {T}h{\'e}or{\`e}me de
  {R}iemann--{R}och ({SGA} 6)},
   publisher={Springer-Verlag},
     address={New York},
        date={1971},
      volume={225},
        note={S{\'e}minaire de G{\'e}om{\'e}trie Alg{\'e}brique du Bois-Marie,
  1966--1967, Lecture Notes in Math. Dirig{\'e} par P. Berthelot, A.
  Grothendieck and L. Illusie. Avec la collaboration de D. Ferrand, J. P.
  Jouanolou, O. Jussila, S. Kleiman, M. Raynaud et J.P. Serre.},
}

\bib{Bedulev-Viehweg00}{article}{
      author={Bedulev, Egor},
      author={Viehweg, Eckart},
       title={On the {S}hafarevich conjecture for surfaces of general type over
  function fields},
        date={2000},
        ISSN={0020-9910},
     journal={Invent. Math.},
      volume={139},
      number={3},
       pages={603\ndash 615},
      review={\MR{MR1738062 (2001f:14065)}},
}

\bib{CKM88}{article}{
      author={Clemens, Herbert},
      author={Koll{\'a}r, J{\'a}nos},
      author={Mori, Shigefumi},
       title={Higher-dimensional complex geometry},
        date={1988},
        ISSN={0303-1179},
     journal={Ast{\'e}risque},
      number={166},
       pages={144 pp. (1989)},
      review={\MR{MR1004926 (90j:14046)}},
}

\bib{Deng22}{article}{
      author={Deng, Ya},
       title={On the hyperbolicity of base spaces for maximally variational
  families of smooth projective varieties (with the appendix by {D}an
  {A}bramovich)},
        date={2022},
     journal={J. Eur. Math. Soc. (JEMS)},
      volume={24},
      number={7},
       pages={2315\ndash 2359},
}

\bib{DM69}{article}{
      author={Deligne, P.},
      author={Mumford, D.},
       title={The irreducibility of the space of curves of given genus},
        date={1969},
     journal={Inst. Hautes {\'E}tudes Sci. Publ. Math.},
      number={36},
       pages={75\ndash 109},
}

\bib{Ha77}{book}{
      author={Hartshorne, Robin},
       title={Algebraic geometry},
   publisher={Springer-Verlag},
     address={New York},
        date={1977},
        ISBN={0-387-90244-9},
        note={Graduate Texts in Mathematics, No. 52},
      review={\MR{0463157 (57 \#3116)}},
}

\bib{Hassett-Kovacs04}{article}{
      author={Hassett, Brendan},
      author={Kov{\'a}cs, S{\'a}ndor~J},
       title={Reflexive pull-backs and base extension},
        date={2004},
        ISSN={1056-3911},
     journal={J. Algebraic Geom.},
      volume={13},
      number={2},
       pages={233\ndash 247},
      review={\MR{2047697 (2005b:14028)}},
}

\bib{MR2665168}{book}{
      author={Huybrechts, Daniel},
      author={Lehn, Manfred},
       title={The geometry of moduli spaces of sheaves},
     edition={Second},
      series={Cambridge Mathematical Library},
   publisher={Cambridge University Press},
     address={Cambridge},
        date={2010},
        ISBN={978-0-521-13420-0},
         url={http://dx.doi.org/10.1017/CBO9780511711985},
      review={\MR{2665168 (2011e:14017)}},
}

\bib{HMX18}{article}{
      author={Hacon, Christopher~D.},
      author={McKernan, James},
      author={Xu, Chenyang},
       title={Boundedness of moduli of varieties of general type},
        date={2018},
     journal={J. Eur. Math. Soc. (JEMS)},
      volume={20},
      number={4},
       pages={523\ndash 571},
}

\bib{JLSZ}{article}{
      author={Javanpeykar, Ariyan},
      author = {Lu, Steven},
      author = {Sun, Ruiran},
      author = {Zuo, Kang}, 
       title={Finiteness of pointed maps to moduli spaces of polarized varieties},
        date={2025},
     note={\href{https://arxiv.org/abs/2310.06784}{DOI:arXiv2310.06784}},
}

\bib{Kar00}{article}{
      author={Karu, Kalle},
       title={Minimal models and boundedness of stable varieties},
        date={2000},
     journal={J. Algebraic Geom.},
      volume={9},
      number={1},
       pages={93\ndash 109},
}

\bib{Kawakita}{article}{    AUTHOR = {Kawakita, Masayuki},
     TITLE = {Inversion of adjunction on log canonicity},
   JOURNAL = {Invent. Math.},
  FJOURNAL = {Inventiones Mathematicae},
    VOLUME = {167},
      YEAR = {2007},
    NUMBER = {1},
     PAGES = {129--133},
      ISSN = {0020-9910,1432-1297},
   MRCLASS = {14E30 (14N30)},
  MRNUMBER = {2264806},
MRREVIEWER = {Carla\ Novelli},
       DOI = {10.1007/s00222-006-0008-z},
       URL = {https://doi.org/10.1007/s00222-006-0008-z},
       }

\bib{Kawamata85}{article}{
      author={Kawamata, Yujiro},
       title={Minimal models and the {K}odaira dimension of algebraic fiber
  spaces},
        date={1985},
        ISSN={0075-4102},
     journal={J. Reine Angew. Math.},
      volume={363},
       pages={1\ndash 46},
         url={http://dx.doi.org/10.1515/crll.1985.363.1},
  note={\href{http://dx.doi.org/10.1515/crll.1985.363.1}{DOI:10.1515/crll.1985.363.1}},
}

\bib{KK10}{article}{
      author={Kov{\'a}cs, S{\'a}ndor~J.},
      author={Kebekus, Stefan},
       title={The structure of surfaces and threefolds mapping to the moduli stack of canonically polarized varieties},
        date={2010},
     journal={Duke Math. J.},
      volume={155},
      number={1},
       pages={1\ndash 33},
         url={http://dx.doi.org/10.1215/00127094-2010-049},
}

\bib{KoL10}{article}{
      author={Kov{\'a}cs, S{\'a}ndor~J.},
      author={Lieblich, Max},
       title={Boundedness of families of canonically polarized manifolds: A
  higher dimensional analogue of shafarevich's conjecture},
        date={2010},
     journal={Ann. Math.},
      volume={172},
      number={3},
       pages={1719\ndash 1748},
  note={\href{https://annals.math.princeton.edu/2010/172-3/p06}{DOI:10.4007/annals.2010.172.1719}},
}

\bib{KeelMori}{article}{
      author={Keel, S},
      author={Mori, S},
       title={Quotients by groupoids},
        date={1997},
     journal={Ann. Math.},
      volume={145},
      number={1},
       pages={193\ndash 213},
}

\bib{Knu71}{book}{
      author={Knutson, Donald},
       title={Quasi-projective moduli for polarized manifolds},
      series={Lecture Notes in Mathematics},
   publisher={Springer},
     address={Berlin},
        date={1971},
      volume={203},
}

\bib{KollarSingsOfTheMMP}{book}{
      author={Koll{\'a}r, J{\'a}nos},
       title={Singularities of the minimal model program},
      series={Cambridge Tracts in Mathematics},
   publisher={Cambridge University Press, Cambridge},
        date={2013},
      volume={200},
        ISBN={978-1-107-03534-8},
         url={http://dx.doi.org/10.1017/CBO9781139547895},
        note={With a collaboration of S{\'a}ndor Kov{\'a}cs},
      review={\MR{3057950}},
}

\bib{Kol18}{article}{
      author={Koll\'{a}r, J\'{a}nos},
       title={Log-plurigenera in stable families},
        date={2018},
        ISSN={2096-6075},
     journal={Peking Math. J.},
      volume={1},
      number={1},
       pages={81\ndash 107},
         url={https://doi.org/10.1007/s42543-018-0002-6},
      review={\MR{4059993}},
}

\bib{ModBook}{book}{
       AUTHOR = {Koll\'ar, J\'anos},
     TITLE = {Families of varieties of general type},
    SERIES = {Cambridge Tracts in Mathematics},
    VOLUME = {231},
      NOTE = {With the collaboration of Klaus Altmann and S\'andor J.
              Kov\'acs},
 PUBLISHER = {Cambridge University Press, Cambridge},
      YEAR = {2023},
     PAGES = {xviii+471},
      ISBN = {978-1-009-34610-8},
}

\bib{Kollar90}{article}{
      author={Koll{\'a}r, J{\'a}nos},
       title={Projectivity of complete moduli},
        date={1990},
        ISSN={0022-040X},
     journal={J. Differential Geom.},
      volume={32},
      number={1},
       pages={235\ndash 268},
      review={\MR{1064874 (92e:14008)}},
}

\bib{Kresch}{article}{
      author={Kresch, A.},
       title={On the geometry of {D}eligne--{M}umford stacks},
        date={2009},
     journal={Algebraic Geometry--Seattle 2005. Part 1, Proc. Sympos. Pure
  Math.},
      volume={80},
       pages={259\ndash 271},
        note={MR 2483938. Zbl. 1169.14001},
}

\bib{KS13}{article}{
      author={Kov{\'a}cs, S{\'a}ndor~J.},
      author={Schwede, Karl},
       title={Inversion of adjunction for rational and {D}u~{B}ois pairs},
        date={2016},
     journal={Algebra Number Theory},
      volume={10},
      number={5},
       pages={969\ndash 1000},
         url={http://dx.doi.org/10.2140/ant.2016.10.969},
}

\bib{KT22}{article}{
      author={Kov{\'a}cs, S{\'a}ndor~J},
      author={Taji, Behrouz},
       title={{A}rakelov inequalities in higher dimensions},
        date={2023},
     journal={Journal f{\"u}r die reine und angewandte {M}athematik ({C}relles
  Journal)},
      volume={2024},
      number={806},
       pages={115\ndash 145},
         url={https://doi.org/10.1515/crelle-2023-0075},
  note={\href{https://doi.org/10.1515/crelle-2023-0075}{DOI:10.1515/crelle-2023-0075}},
}

\bib{KT21}{article}{
      author={Kov{\'a}cs, S{\'a}ndor~J},
      author={Taji, Behrouz},
       title={Hodge sheaves underlying flat projective families},
        date={2023},
     journal={Math. Z.},
      volume={303},
      number={75},
  note={\href{https://doi.org/10.1007/s00209-023-03219-4}{DOI:10.1007/s00209-023-03219-4}},
}

\bib{KT25}{article}{
      author={Kov{\'a}cs, S{\'a}ndor~J},
      author={Taji, Behrouz},
       title={The Relative Du Bois Complex---on a Question of S. {Z}ucker},
        date={2025},
     journal={Higher Dimensional Algebraic Geometry: A Volume in Honor of V. V. Shokurov, Edited by Hacon and Xu.},
     publisher ={University of Cambridge},
      volume={489},
      pages={151\ndash162}, 
}

\bib{LMB00}{book}{
      author={Laumon, G.},
      author={Moret-Bailly, L.},
       title={Champs alg{\'e}briques},
   publisher={Springer-Verlag},
     address={New York},
        date={2000},
      volume={39},
        note={Ergeb. Math. Grenzgeb},
}

\bib{Mat72}{article}{
      author={Matsusaka, T.},
       title={Polarized varieties with given {H}illbert polynomial},
        date={1972},
     journal={Amer. J. Math.},
      volume={9},
       pages={1027\ndash 1077},
}

\bib{Mat-Mum64}{article}{
      author={Matsusaka, T.},
      author={Mumford, D.},
       title={Two fundamental theorems on deformations of polarized varieties},
        date={1964},
     journal={Amer. J. Math.},
      volume={86},
       pages={668\ndash 684},
        note={Correction: {\it Amer.\ J.\ Math.\/}, 91:851, 1969},
      review={\MR{30 \#2005}},
}

\bib{Olsson16}{book}{
      author={Olsson, M.},
       title={Quasi-projective moduli for polarized manifolds},
      series={Colloquium Publications},
   publisher={American Mathematical Society},
        date={2016},
      volume={62},
}

\bib{Parshin68}{article}{
      author={Parshin, Aleksey~N.},
       title={Algebraic curves over function fields. {I}},
        date={1968},
        ISSN={0373-2436},
     journal={Izv. Akad. Nauk SSSR Ser. Mat.},
      volume={32},
       pages={1191\ndash 1219},
      review={\MR{0257086 (41 \#1740)}},
}

\bib{PZ19}{article}{
      author={Patakflavi, {Z}s.},
      author={Zdanowicz, M.},
       title={On the Beauville--Bogomolov decomposition in characteristic $p\geq 0$},
        date={2019},
        note={\href{https://arxiv.org/abs/1912.12742}{arXiv:1912.12742}},
}

\bib{Rydh}{article}{
    author = {Rydh, David},
    title = {Compactifications of tame {D}eligne-{M}umford stacks},
    journal = {Draft},
    year = {2011},
}

\bib{Siu98}{article}{
      author={Siu, Yum-Tong},
       title={Invariance of plurigenera},
        date={1998},
        ISSN={0020-9910},
     journal={Invent. Math.},
      volume={134},
      number={3},
       pages={661\ndash 673},
      review={\MR{1660941 (99i:32035)}},
}

\bib{Taji20}{article}{
      author={Taji, Behrouz},
       title={Birational geometry of smooth families of varieties admitting
  good minimal models},
        date={2023},
     journal={European Journal of Mathematics},
      volume={88},
      number={9},
         url={https://doi.org/10.1007/s40879-023-00681-6},
  note={\href{https://doi.org/10.1007/s40879-023-00681-6}{DOI:10.1007/s40879-023-00681-6}},
}

\bib{Vie08}{article}{
      author={Viehweg, Eckart},
       title={Arakelov inequalities},
        date={2008},
     journal={Surveys Diff. Geom.},
      volume={13},
       pages={245\ndash 276},
        note={preprint
  \href{https://arxiv.org/abs/0812.3350}{arXiv:0812.3350}.},
}

\bib{Vie10}{article}{
      author={Viehweg, Eckart},
       title={Compactification of smooth families and of moduli spaces of
  polarized manifolds},
        date={2010},
     journal={Ann. Math.},
      volume={172},
       pages={809\ndash 910},
}

\bib{Viehweg83}{incollection}{
      author={Viehweg, Eckart},
       title={Weak positivity and the additivity of the {K}odaira dimension for
  certain fibre spaces},
        date={1983},
   booktitle={Algebraic varieties and analytic varieties (tokyo, 1981)},
      series={Adv. Stud. Pure Math.},
      volume={1},
   publisher={North-Holland},
     address={Amsterdam},
       pages={329\ndash 353},
      review={\MR{715656 (85b:14041)}},
}

\bib{Viehweg95}{book}{
      author={Viehweg, E.},
       title={Quasi-projective moduli for polarized manifolds},
      series={Ergebnisse der Mathematik und ihrer Grenzgebiete (3)},
   publisher={Springer-Verlag},
     address={Berlin},
        date={1995},
      volume={30},
        ISBN={3-540-59255-5},
      review={\MR{1368632 (97j:14001)}},
}

\bib{Vie-Zuo01}{article}{
      author={Viehweg, Eckart},
      author={Zuo, Kang},
       title={On the isotriviality of families of projective manifolds over
  curves},
        date={2001},
        ISSN={1056-3911},
     journal={J. Algebraic Geom.},
      volume={10},
      number={4},
       pages={781\ndash 799},
      review={\MR{1838979 (2002g:14012)}},
}

\bib{VZ02}{incollection}{
      author={Viehweg, Eckart},
      author={Zuo, K.},
       title={Base spaces of non-isotrivial families of smooth minimal models},
        date={2002},
   booktitle={Complex geometry (g{\"o}ttingen, 2000)},
   publisher={Springer},
     address={Berlin},
       pages={279\ndash 328},
      review={\MR{1922109 (2003h:14019)}},
}

\end{biblist}
\end{bibdiv}

\end{document}
